\documentclass[a4paper, 11pt]{amsart}
\usepackage[utf8]{inputenc}
\usepackage[T1]{fontenc}
\usepackage[english]{babel}
\usepackage{graphicx}
\usepackage{stmaryrd}
\usepackage{tikz}
\usepackage{tikz-cd} 
\usepackage[all]{xy}
\usepackage{comment}
\usepackage{amsmath,amsfonts,amssymb}
\usepackage[a4paper]{geometry}
\usepackage{colortbl}
\geometry{hmargin=3cm,vmargin=2.5cm}

\usepackage{amsthm}
\usepackage{float}
\usepackage{enumitem}
\usepackage{hyperref}
\usepackage{xcolor}
\hypersetup{
    colorlinks=true,
    linkcolor={red},
    citecolor={blue},
    urlcolor={blue}}
\newtheorem{te}{Theorem}[section]

\newtheorem{prop}[te]{Proposition}
\newtheorem{co}[te]{Corollary}
\newtheorem{lemme}[te]{Lemma}
\theoremstyle{definition}
\newtheorem{de}[te]{Definition}

\theoremstyle{remark}
\newtheorem{rque}[te]{Remark}
\setlength{\parskip}{0mm}
 
\newenvironment{proof2}{\noindent\textit{Proof of Lemma \ref{penible}.~}}{\hfill$\square$\bigbreak} 
\newlength{\plarg}
\setlength{\plarg}{12cm}
\setcounter{tocdepth}{1}
\usetikzlibrary{cd}
\calclayout
\title{Virtually free groups are almost homogeneous}
\author{Simon André}
\date{\today}
\begin{document}
\clearpage
\begin{minipage}{\linewidth}
\begin{abstract}Free groups are known to be homogeneous, meaning that finite tuples of elements which satisfy the same first-order properties are in the same orbit under the action of the automorphism group. We show that virtually free groups have a slightly weaker property, which we call uniform almost-homogeneity: the set of $k$-tuples which satisfy the same first-order properties as a given $k$-tuple $\mathbf{u}$ is the union of a finite number of $\mathrm{Aut}(G)$-orbits, and this number is bounded independently from $\mathbf{u}$ and $k$. Moreover, we prove that there exists a virtually free group which is not $\exists$-homogeneous. We also prove that all hyperbolic groups are homogeneous in a probabilistic sense.
\end{abstract}
\maketitle
\end{minipage}

\section{Introduction}
Let $G$ be a group and let $k\geq 1$ be an integer. We say that two tuples $\mathbf{u}=(u_1,\ldots ,u_k)$ and $\mathbf{v}=(v_1,\ldots ,v_k)$ in $G^k$ have the same \textit{type} if, given any first-order formula $\theta(x_1,\ldots ,x_k)$ with $k$ free variables, the statement $\theta(\mathbf{u})$ is true in $G$ if and only if the statement $\theta(\mathbf{v})$ is true in $G$. We use the notation $\mathrm{tp}(\mathbf{u})=\mathrm{tp}(\mathbf{v})$. Roughly speaking, two tuples have the same type if they are indistinguishable from the point of view of first-order logic. Obviously, two tuples that are in the same orbit under the action of the automorphism group of $G$ have the same type. The group $G$ is termed \textit{homogeneous} if the converse holds, i.e.\ if the power of expression of first-order logic is strong enough to distinguish between two tuples that belong to different orbits under the action of the automorphism group of $G$. In other words, $G$ is homogeneous if two finite tuples are indistinguishable from the point of view of logic if and only if they are indistinguishable from the point of view of algebra. In the same way, we define $\exists$-homogeneity by considering the first-order formulas involving only the $\exists$ quantifier (see \ref{logic}).

In \cite{Nie03}, Nies proved that the free group $\mathbf{F}_2$ on two generators is $\exists$-homogeneous. Then, Perin and Sklinos, and independently Ould Houcine, proved that the free groups are homogeneous (see \cite{PS12} and \cite{OH11}). Moreover, Perin and Sklinos showed in \cite{PS12} that the fundamental group of a closed orientable surface of genus $\geq 2$ is not homogeneous, using a deep result of Sela. Later, Byron and Perin gave a complete characterization of freely-indecomposable torsion-free homogeneous hyperbolic groups, in terms of their JSJ decomposition. 

In this paper, we are mainly concerned with homogeneity in the class of finitely generated virtually free groups, i.e.\ finitely generated groups with a free subgroup of finite index. Note that there is no \textit{a priori} relation between homogeneity in a group and homogeneity in a finite index subgroup or in a finite extension. In the sequel, all virtually free groups are assumed to be finitely generated. 

We prove that all virtually free groups satisfy a slightly weaker form of homogeneity: a group $G$ is \textit{almost-homogeneous} if for any $k\geq 1$ and $\mathbf{u}\in G^k$, there exists an integer $N\geq 1$ such that the set of $k$-tuples of elements of $G$ having the same type as $\mathbf{u}$ is the union of $N$ orbits under the action of $\mathrm{Aut}(G)$, and $G$ is \textit{uniformly almost-homogeneous} if $N$ can be chosen independently from $\mathbf{u}$ and $k$. Note that $G$ is homogenous if and only if one can take $N=1$ in the previous definition.

\begin{te}\label{principal}Virtually free groups are uniformly almost-homogeneous.\end{te}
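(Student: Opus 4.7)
The plan is to extend the JSJ-plus-shortening machinery of Perin--Sklinos and Byron--Perin for torsion-free hyperbolic groups to the virtually free setting, while keeping track quantitatively of how many $\mathrm{Aut}(G)$-orbits a single type can contain. Throughout, I would view $G$ via its Stallings--Dunwoody splitting as a finite graph of finite groups, and use the Guirardel--Levitt JSJ over finite edge subgroups as the replacement for the cyclic JSJ in the torsion-free setting. All steps will be performed relatively to the tuple $\mathbf{u}$, so that the constants obtained depend only on the combinatorics of this splitting, which itself depends only on $G$.

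Fix $\mathbf{u}\in G^k$, let $H=\langle\mathbf{u}\rangle$ (itself finitely generated and virtually free), and let $\Delta$ denote its JSJ over finite subgroups. Given $\mathbf{v}$ of the same type as $\mathbf{u}$, set $\pi_{\mathbf{v}}:H\to G$, $u_i\mapsto v_i$. The first step is to use a Merzlyakov-type theorem for virtually free groups, applied to the $\forall\exists$-formulas in $\mathrm{tp}(\mathbf{u})$, to produce a preretraction $H\twoheadrightarrow\pi_{\mathbf{v}}(H)$ respecting $\Delta$. The shortening argument, applied after precomposing $\pi_{\mathbf{v}}$ by elements of the modular group $\mathrm{Mod}_\Delta(H)$ (generated by Dehn twists along finite edges, using centralisers taken in $G$, together with inner automorphisms of vertex groups), then reduces $\pi_{\mathbf{v}}$ to a preretraction which, via the Rips machine for actions on $\mathbb{R}$-trees with finite arc stabilisers, must be an isomorphism; hence $\mathbf{v}=\sigma(\mathbf{u})$ for some $\sigma\in\mathrm{Mod}_\Delta(H)$.

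The uniform bound on the number of orbits comes from controlling how much of $\mathrm{Mod}_\Delta(H)$ survives to $\mathrm{Aut}(G)$. Dehn twists along finite edges of $\Delta$ extend from $H$ to $G$ (the twisting element lies in $G$ by construction), so the only obstruction to extension is the induced action on the finite vertex groups of $\Delta$. Since vertex stabilisers in the Stallings--Dunwoody decomposition of $G$ have order bounded by a constant depending only on $G$, the quotient of $\mathrm{Mod}_\Delta(H)$ by its subgroup of modular automorphisms extending to $\mathrm{Aut}(G)$ has size bounded uniformly in $\mathbf{u}$ and $k$, yielding the constant $N$ in the statement. The main obstacle I anticipate is the rigidity input in the second step: proving that a non-degenerate preretraction respecting a JSJ over finite subgroups must be an automorphism, in the presence of torsion. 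A secondary technical issue, namely adapting the Bestvina--Paulin construction and the Rips machine to graphs of finite groups, should be accessible through the work on hyperbolic groups with torsion of Reinfeldt--Weidmann and Guirardel--Levitt.
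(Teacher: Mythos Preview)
Your proposal has a structural gap that prevents the argument from going through. You work with $H=\langle\mathbf{u}\rangle$ and its splitting $\Delta$, and aim to conclude that $\mathbf{v}=\sigma(\mathbf{u})$ for some $\sigma\in\mathrm{Mod}_\Delta(H)$. But $\sigma$ is (at best) an automorphism of $H$, so $\sigma(\mathbf{u})$ lies in $H^k$; there is no reason whatsoever that $\mathbf{v}$ lies in $H=\langle\mathbf{u}\rangle$ as a subgroup of $G$. Equality of types only forces $\langle\mathbf{u}\rangle$ and $\langle\mathbf{v}\rangle$ to be abstractly isomorphic, not equal (or conjugate, or $\mathrm{Aut}(G)$-related) inside $G$. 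Relatedly, your ``modular group'' is ill-defined: you generate it by Dehn twists along finite edges of $\Delta$ using centralisers \emph{taken in $G$}, but twisting by an element $c\in C_G(E)\setminus H$ does not give an automorphism of $H$ --- it gives a different homomorphism $H\to G$. So the object acting is not a subgroup of $\mathrm{Aut}(H)$, and the extension problem you pose in the last paragraph is not well-posed.

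The paper avoids this by never working with $H=\langle\mathbf{u}\rangle$. Instead it works with endomorphisms of $G$ itself: one first shows (Proposition~\ref{prop1}) that same type gives an endomorphism of $G$ sending $\mathbf{u}$ to $\mathbf{v}$ which restricts to an isomorphism between the maximal one-ended factors $U$ and $V$ of $G$ relative to $\langle\mathbf{u}\rangle$ and $\langle\mathbf{v}\rangle$. The relevant JSJ here is the $\mathcal{Z}$-JSJ (virtually cyclic edges with infinite centre) of the one-ended factor $U$, not a finite-edge JSJ of $H$. The uniform bound $N$ then comes from a source you do not mention: the cocompactness of the Stallings deformation space of $G$ under $\mathrm{Aut}(G)$ (Proposition~\ref{compacité}). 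This gives finitely many possible relative Stallings trees up to $\mathrm{Aut}(G)$, hence finitely many one-ended factors $U_n$ up to $\mathrm{Aut}(G)$; one then reduces to all $u_n$ lying in a fixed $U$ and extends automorphisms of $U$ to $G$ using that $U$ has only finitely many conjugacy classes of finite subgroups. Your proposed source of uniformity --- bounding a quotient of $\mathrm{Mod}_\Delta(H)$ via the orders of finite vertex groups --- would fail even if the framework were coherent, since the \emph{number} of vertices of $\Delta$ is not bounded independently of $\mathbf{u}$.
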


\begin{rque}In fact, virtually free groups are uniformly $\forall\exists$-almost-homogeneous (see \ref{logic}).\end{rque}

It is worth noting that the work of Perin and Sklinos (see \cite{PS12}) shows in fact that the fundamental group of an orientable hyperbolic closed surface is not almost-homogeneous.

In some cases, Theorem \ref{principal} above can be strengthened. For instance, in a work in progress, we prove that co-Hopfian virtually free groups are homogeneous. As an example, the group $\mathrm{GL}_2(\mathbb{Z})$ is homogeneous. We don't know whether or not virtually free groups are homogeneous in general. However, we shall prove that there exists a virtually free group that is not $\exists$-homogeneous. We build this example by exploiting a phenomenom that is specific to torsion (see below for further details). 

An important ingredient of the proof of Theorem \ref{principal} above is the so-called shortening argument, which generalizes to one-ended hyperbolic groups relative to a given subgroup, in the presence of torsion, using results of Guirardel \cite{Gui08} and Reinfeldt-Weidmann \cite{RW14} (generalizing previous work of Rips and Sela, see \cite{RS94}). In particular, the relative co-Hopf property holds for hyperbolic groups with torsion.

\begin{te}Let $G$ be a hyperbolic group and let $H$ be a finitely generated subgroup of $G$. Assume that $G$ is one-ended relative to $H$. Then every monomorphism of $G$ whose restriction to $H$ is the identity is an automorphism of $G$.\end{te}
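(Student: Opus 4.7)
The strategy is classical: assume for contradiction that there exists a monomorphism $\varphi \colon G \to G$ with $\varphi|_H = \mathrm{id}_H$ and $\varphi(G) \subsetneq G$, and derive a contradiction via a relative Bestvina--Paulin / Rips--Sela argument, now carried out in the generality of hyperbolic groups with torsion using the extensions of the shortening machinery due to Guirardel and Reinfeldt--Weidmann that are mentioned in the introduction.

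First I would iterate, setting $\psi_n := \varphi^n \colon G \to G$. Each $\psi_n$ is a monomorphism fixing $H$ pointwise, and since $\varphi$ is injective but not surjective, one has $\psi_{n+1}(G) = \varphi(\psi_n(G)) \subsetneq \psi_n(G)$. The strict descent of the images, combined with the fact that any element of $G$ conjugating $\psi_n$ to $\psi_m$ must centralize $H$ and therefore cannot modify the image subgroup, shows that the family $\{\psi_n\}_{n \geq 1}$ is not contained in finitely many orbits under precomposition by elements of $\mathrm{Aut}_H(G)$ and postcomposition by inner automorphisms of $G$ centralizing $H$.

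For each $n$, I would choose $\sigma_n \in \mathrm{Aut}_H(G)$ so that $\psi_n' := \sigma_n \circ \psi_n$ is shortest in its orbit, i.e.\ minimizes $\lambda_n := \max_{s \in S}|\psi_n'(s)|$ for a fixed finite generating set $S$ of $G$. If the sequence $(\lambda_n)$ remained bounded, then the $\psi_n'$ would take only finitely many values on $S$, contradicting the previous paragraph. Hence $\lambda_n \to \infty$, and rescaling the Cayley graph of $G$ by $1/\lambda_n$ and extracting a Bestvina--Paulin limit yields, after passing to a subsequence, a non-trivial minimal isometric action of $G$ on an $\mathbb{R}$-tree $T$; the hyperbolicity of $G$ forces arc stabilizers of $T$ to be virtually cyclic. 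Because $\psi_n'|_H = \mathrm{id}_H$ (both $\sigma_n$ and $\varphi^n$ fix $H$), the rescaled displacements of the generators of $H$ at the basepoint $1 \in G$ tend to zero, so $H$ fixes a point in $T$ and the limit action is relative to $H$.

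I would then invoke Guirardel's structure theorem for stable actions on $\mathbb{R}$-trees, in the form extended to groups with torsion by Reinfeldt--Weidmann, to decompose $T$ as a graph of actions whose pieces are axial, surface-type and simplicial, and to read off a non-trivial splitting of $G$ as a finite graph of groups over virtually cyclic subgroups, relative to $H$. The hypothesis that $G$ is one-ended relative to $H$ rules out finite edge groups in such a splitting, so the edges carry infinite virtually cyclic groups. Finally, the relative shortening argument in the torsion setting produces a modular automorphism $\sigma \in \mathrm{Mod}_H(G)$ for which $\sigma \circ \psi_n'$ is strictly shorter than $\psi_n'$ for all sufficiently large $n$, contradicting the minimal choice of $\psi_n'$ and completing the argument. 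The main obstacle is precisely this last step: carrying out the shortening argument relative to $H$ in the presence of torsion, which is the technical core of the proof and the reason one needs the full strength of the generalized Rips machine of Guirardel and Reinfeldt--Weidmann rather than the original Rips--Sela version.
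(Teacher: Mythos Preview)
Your overall strategy is the right one and matches the paper's, but there is a genuine gap in the second paragraph. You assert that the strictly descending chain of images $\psi_n(G)\supsetneq\psi_{n+1}(G)$, together with the fact that any conjugating element must centralize $H$, forces the $\psi_n$ to lie in infinitely many orbits under $G\times\mathrm{Aut}_H(G)$. This does not follow. If $\psi_n=\mathrm{ad}(g)\circ\psi_m\circ\sigma$ with $n>m$, then indeed $g\in C_G(H)$, but all one obtains is that $\psi_n(G)=g\psi_m(G)g^{-1}$ is conjugate to $\psi_m(G)$; nothing prevents $g\psi_m(G)g^{-1}\subsetneq\psi_m(G)$. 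Your phrase ``therefore cannot modify the image subgroup'' is exactly the unjustified step. In fact, what you would then have is a relation $\varphi^n=\mathrm{ad}(g)\circ\varphi^m\circ\sigma$, and the remaining problem is to show $g\in\varphi^m(G)$ so that $\mathrm{ad}(g)$ can be pushed past $\varphi^m$ and $\varphi^{n-m}$ becomes an automorphism.

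This residual step is precisely what the paper isolates. The shortening machinery you describe is packaged as a finiteness theorem for monomorphisms up to $G\times\mathrm{Aut}_H(G)$ (Theorem~\ref{sa}), and then a separate algebraic argument (Corollary~\ref{coro}) handles $g\in C_G(H)$ by cases: if $H$ is non-elementary then $C_G(H)$ is finite and $\varphi$ permutes it, giving $g=\varphi^m(k)$ directly; if $H$ is infinite virtually cyclic then $C_G(H)$ is infinite and one needs a coset argument modulo $Z(H)$ applied to a suitable product of conjugators; the case $H$ finite is quoted from Moioli. So your limiting and shortening paragraphs are correct in spirit, but they yield finiteness of orbits rather than a direct contradiction, and the work with the centralizer is not a formality. (A minor point: the shortening automorphisms act by precomposition on the source, so $\sigma_n\circ\psi_n$ and $\sigma\circ\psi_n'$ should read $\psi_n\circ\sigma_n$ and $\psi_n'\circ\sigma$.)
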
 

\begin{rque}In the case where $G$ is torsion-free and $H$ is trivial, this result was proved by Sela in \cite{Sel97}. For $G$ torsion-free and $H$ infinite, it was proved by Perin in his PhD thesis \cite{Per08} (see also \cite{Per11} and \cite{PS12}). In the case where $G$ has torsion and $H$ is finite, this theorem is due to Moioli (see his PhD thesis \cite{Moi13}).\end{rque}

\subsection*{$\exists$-homogeneity}\label{details}In \cite{Nie03}, Nies proved that the free group $\mathbf{F}_2$ is $\exists$-homogeneous. We stress that it is not known whether the free group $\mathbf{F}_n$ is $\exists$-homogeneous for $n\geq 3$. In Section \ref{contre-exemple}, we will give an example of a virtually free group that is not $\exists$-homogeneous (and we do not know whether this group is homogeneous). More precisely, we will prove the following result.                                                                                                                                                                         

\begin{prop}\label{contre-exemple2}
There exist a virtually free group $G=A\ast_C B$, with $A,B$ finite, and two elements $x,y\in G$ such that:
\begin{itemize}
\item[$\bullet$]there exists a monomorphism $G\hookrightarrow G$ that interchanges $x$ and $y$ (in particular, $x$ and $y$ have the same existential type);
\item[$\bullet$]no automorphism of $G$ maps $x$ to $y$.
\end{itemize}
\end{prop}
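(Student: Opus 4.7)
\begin{proof1}
The plan is to construct an explicit amalgamated product $G = A \ast_C B$ with $A, B$ finite, together with elements $x, y \in G$ and a monomorphism $\phi: G \to G$ witnessing both bullets.

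For the second bullet, I would design the example so that the conjugacy classes of maximal finite subgroups of $G$, which are precisely the conjugates of $A$ and of $B$, are distinguishable under $\mathrm{Aut}(G)$. This can be achieved, for instance, by taking $A$ and $B$ non-isomorphic, or isomorphic but with the inclusions $C \hookrightarrow A$ and $C \hookrightarrow B$ lying in distinct $\mathrm{Aut}(A)$-orbits of subgroups of $A$ isomorphic to $C$ (so that no isomorphism $A \to B$ respects both inclusions, blocking any swap). With $x$ and $y$ of the same order placed in maximal finite subgroups of distinct types, no automorphism of $G$ can send $x$ to $y$, since it would then need to interchange the $\mathrm{Aut}(G)$-orbits of maximal finite subgroups.

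For the first bullet, construct $\phi$ explicitly on generators of $A$ and $B$. The crucial observation is that an endomorphism $\phi$ is only required to send finite subgroups injectively into finite subgroups of $G$, and the finite groups $A$ and $B$ admit many more injective homomorphisms into $G$ (through various conjugates of the vertex groups) than can be realized as restrictions of automorphisms of $G$. I would pick injective homomorphisms $\alpha: A \hookrightarrow G$ and $\beta: B \hookrightarrow G$ that agree on $C$ and satisfy $\alpha(x) = y$, $\beta(y) = x$. By the universal property of the amalgamated product, these assemble into a homomorphism $\phi: G \to G$ swapping $x$ and $y$, showing in particular that $x$ and $y$ have the same existential type.

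The main difficulty is to ensure that $\phi$ is actually injective. One proceeds via Bass--Serre theory: the images $\alpha(A)$ and $\beta(B)$ stabilize two vertices of the Bass--Serre tree of $G$; if these are chosen to be adjacent, with edge stabilizer equal to the common image of $C$, then the subgroup $\phi(G)$ carries a graph-of-groups decomposition identical to that of $G = A \ast_C B$, so $\phi$ is injective by the normal form theorem. Realizing all three constraints simultaneously (swap of $x$ and $y$; compatibility on $C$; vertex-adjacency in $T_G$) is precisely the torsion-specific phenomenon alluded to in the introduction: in the torsion-free setting, Sela-style JSJ rigidity would force any such $\phi$ to be an automorphism, a conclusion which breaks down in the presence of finite vertex groups that admit more embeddings than automorphism extensions.
\end{proof1}
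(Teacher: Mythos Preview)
Your proposal is a strategy outline, not a proof: the proposition asserts existence, so a proof must exhibit concrete $A$, $B$, $C$, $x$, $y$ and verify both bullets. You describe desiderata but produce no witness, and the constraints you impose are in tension. Your plan places $x$ and $y$ in maximal finite subgroups of \emph{distinct types}. But if $x\in A\setminus C$ and $y\in B\setminus C$, then a monomorphism with $\phi(x)=y$ forces $A$ to embed in a conjugate of $B$, and $\phi(y)=x$ forces $B$ to embed in a conjugate of $A$; finiteness gives $A\cong B$. You anticipate this and propose taking $A\cong B$ with the two inclusions of $C$ in different $\mathrm{Aut}(A)$-orbits, but then you must simultaneously arrange $\alpha,\beta$ to agree on $C$, land at adjacent vertices of the Bass--Serre tree, and satisfy $\alpha(A)\cap\beta(B)=\alpha(C)$ (this intersection condition is what injectivity actually requires, not mere adjacency). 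None of this is carried out, and it is not clear it can be.

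The paper's construction follows a quite different idea: both $x$ and $y$ lie in $A$, and $A\not\cong B$. Concretely $C=(\mathbb{Z}/2\mathbb{Z})^4$, $A=C\rtimes(\langle x\rangle\times\langle y\rangle)$ with $x,y$ acting as two commuting transpositions of the standard basis, and $B=C\rtimes\langle z\rangle$ with $z$ a $3$-cycle on the basis. The monomorphism is assembled from an \emph{automorphism} $\psi$ of $A$ swapping $x$ and $y$ (and permuting $C$), together with conjugation by a fixed element $u$ on $B$; since $\psi$ and $\mathrm{ad}(u)$ agree on $C$, this defines $\phi:G\to G$, and injectivity is immediate from normal forms because $\phi$ sends reduced words to reduced words. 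The obstruction to an automorphism is not which maximal finite subgroup contains $x$ or $y$ (they sit in the same one) but a finer invariant: $\langle x,z\rangle$ acts on $C$ as $S_3$ while $\langle y,z\rangle$ acts as $S_4$, and since the Stallings deformation space is a single point, any automorphism would have to preserve $A$, $B$, $C$ up to conjugacy and hence preserve this distinction.

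So the gap is not a single wrong step but the absence of the step that matters: an explicit example in which all your constraints are simultaneously realised. The paper's route avoids the tension in your plan by keeping $x$ and $y$ inside the same vertex group and using a subtler invariant to block automorphisms.
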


This is a new phenomenon, which does not appear in free groups, as shown by the proposition below (see \cite{OH11} Lemma 3.7, or Proposition \ref{libre} below).

\begin{prop}\label{introp}Let $x$ and $y$ be two elements of the free group $\mathbf{F}_n$. The two following statements are equivalent.
\begin{enumerate}
\item There exists a monomorphism $G\hookrightarrow G$ that sends $x$ to $y$, and a monomorphism $G\hookrightarrow G$ that sends $y$ to $x$.
\item There exists an automorphism of $G$ that sends $x$ to $y$.
\end{enumerate}
\end{prop}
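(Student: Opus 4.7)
The direction $(2) \Rightarrow (1)$ is immediate: if $\alpha$ is an automorphism of $G = \mathbf{F}_n$ with $\alpha(x) = y$, then $\alpha$ and $\alpha^{-1}$ serve as the required monomorphisms.

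For $(1) \Rightarrow (2)$, let $\phi, \psi \colon G \hookrightarrow G$ satisfy $\phi(x) = y$ and $\psi(y) = x$. The first step is to reduce to the case where neither $x$ nor $y$ is a proper power. Writing $x = x_0^k$ and $y = y_0^{\ell}$ with $x_0, y_0$ not proper powers (unique in a free group up to inversion), unique root extraction in $G$ applied to $\phi(x_0)^k = y_0^\ell$ and $\psi(y_0)^\ell = x_0^k$ yields $k \mid \ell$, $\ell \mid k$, and hence $k = \ell$, $\phi(x_0) = y_0$, $\psi(y_0) = x_0$. Since any automorphism sending $x_0$ to $y_0$ automatically sends $x$ to $y$, I may replace the pair $(x, y)$ by $(x_0, y_0)$.

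Now consider $f := \psi \circ \phi$, an injective endomorphism of $G$ with $f(x) = x$. If $x$ is a \emph{test element} of $G$ in Turner's sense (every endomorphism of $G$ fixing $x$ is an automorphism), then $f$ is an automorphism; since $G$ is Hopfian, the surjectivity of $\psi \circ \phi$ forces $\psi$ itself to be surjective and hence an automorphism, and consequently $\phi = \psi^{-1} \circ f$ is an automorphism sending $x$ to $y$. If $x$ is not a test element, Turner's theorem yields that $x$ lies in a proper retract of $G$, which in the free-group setting is a proper free factor; let $F_x$ be the smallest free factor of $G$ containing $x$, of rank $r < n$, and define $F_y$ analogously. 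My plan in this case is to induct on $n$: show that $\phi$ and $\psi$ descend (up to postcomposition by automorphisms of $G$) to monomorphisms $F_x \to F_y$ and $F_y \to F_x$ sending $x \leftrightarrow y$, apply the induction hypothesis inside these lower-rank free factors, and extend the resulting automorphism to $G$ via the free-product decomposition $G = F_x * K$. The main obstacle is this descent step, and specifically the rank equality $\mathrm{rk}(F_y) = r$ together with the compatibility of $\phi(F_x)$ with the free-factor lattice of $G$; I expect this to require a careful use of the monomorphism $\psi$ in the reverse direction, in the spirit of \cite[Lemma 3.7]{OH11}.
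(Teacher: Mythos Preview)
Your proposal is incomplete: the step you yourself flag as ``the main obstacle'' --- the descent of $\phi$ and $\psi$ to maps between the minimal free factors $F_x$ and $F_y$ --- is never carried out, and it is precisely the heart of the matter. The surrounding machinery you introduce (the reduction to non-proper-powers, Turner's test-element dichotomy, the induction on rank) is unnecessary and does not help with this step.

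Here is how the paper dispatches the whole thing directly. Let $H_x$ be the smallest free factor of $\mathbf{F}_n$ containing $x$, and likewise $H_y$ for $y$. The point is that $H_x$ is \emph{one-ended relative to} $\langle x\rangle$: it admits no nontrivial free splitting in which $x$ is elliptic. Since $\phi$ is injective, $\phi(H_x)$ is also one-ended relative to $\langle y\rangle$; acting on the Bass-Serre tree of $\mathbf{F}_n = H_y * K_y$ (edge groups trivial, $y$ elliptic), $\phi(H_x)$ must therefore be elliptic, i.e.\ $\phi(H_x)\subset H_y$. Symmetrically $\psi(H_y)\subset H_x$. Now $(\psi\circ\phi)\restriction_{H_x}$ is a monomorphism of $H_x$ fixing $x$, and since $H_x$ is one-ended relative to $\langle x\rangle$, the relative co-Hopf property (Theorem~\ref{coHopf}) makes it an automorphism. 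Hence $\phi\restriction_{H_x}\colon H_x\to H_y$ is an isomorphism, in particular $\mathrm{rk}(H_x)=\mathrm{rk}(H_y)$, and one extends to an automorphism of $\mathbf{F}_n$ by choosing any isomorphism between the complementary free factors.

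So the ``descent'' you were missing comes for free from relative one-endedness plus the relative co-Hopf property, with no induction, no Turner, and no root-extraction needed.
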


The reason why the previous result holds in free groups is that every isomorphism between the free factors of $\mathbf{F}_n$ containing $x$ and $y$ respectively is the restriction of an ambient automorphism of $\mathbf{F}_n$. This fact fails in virtually free groups in general.

\subsection*{Generic homogeneity}

It is natural to wonder to what extent a given non-homogeneous hyperbolic group is far from being homogeneous. We shall prove that, in the sense of random walks, the $\mathrm{Aut}(G)$-orbit of a tuple in a hyperbolic group $G$ is determined by first-order logic. More precisely, a random tuple in a hyperbolic group has the property that if it has the same type as another tuple, then these two tuples are in the same orbit under the action of the automorphism group. We introduce the following definition.

\begin{de}Let $G$ be a group. A $k$-tuple $u\in G^k$ is said to be \textit{type-determined} if it has the following property: for every $k$-tuple $v\in G^k$, if $u$ and $v$ have the same type, then they are in the same $\mathrm{Aut}(G)$-orbit.\end{de} 

Let $G$ be a finitely generated group and let $\mu$ be a probability measure on $G$ whose support is finite and generates $G$. An element of $G$ arising from a random walk on $G$ of length $n$ generated by $\mu$ is called a \textit{random element} of length $n$. We define a \textit{random $k$-tuple} of length $n$ as a $k$-tuple of random elements of length $n$ arising from $k$ independant random walks. In Section \ref{generic}, we prove the following result.

\begin{te}\label{randomgeneric}Fix an integer $k\geq 1$. In a hyperbolic group, the probability that a random $k$-tuple of length $n$ is type-determined tends to one as $n$ tends to infinity.\end{te}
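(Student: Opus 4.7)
The plan is to combine the relative co-Hopf property (Theorem~1.3) with random-walk genericity on hyperbolic groups. Call a tuple $\mathbf{u}\in G^k$ \emph{rigid} if $G$ is one-ended relative to $\langle\mathbf{u}\rangle$. The strategy is to show first that every rigid tuple is type-determined, and then that random tuples are generically rigid.

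For the first step, suppose $\mathbf{u}$ is rigid and $\mathrm{tp}(\mathbf{u})=\mathrm{tp}(\mathbf{v})$. Fix a finite presentation $G=\langle s_1,\ldots,s_m\mid R\rangle$ and write $u_i=w_i(\bar s)$. Using equational Noetherianity of hyperbolic groups \cite{RW14}, the property ``the assignment $s_j\mapsto y_j$ extends to an injective endomorphism of $G$'' is equivalent to a finite system of equations and inequations in $\bar y$. Adjoining the equations $w_i(\bar y)=x_i$ yields a first-order formula $\theta(\mathbf{x})$ expressing ``there exists a monomorphism $G\hookrightarrow G$ sending $\mathbf{u}$ to $\mathbf{x}$''. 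The formula $\theta(\mathbf{u})$ is tautologically true, hence $\theta(\mathbf{v})$ is true, producing a monomorphism $\varphi\colon G\hookrightarrow G$ with $\varphi(\mathbf{u})=\mathbf{v}$; symmetrically one obtains $\psi\colon G\hookrightarrow G$ with $\psi(\mathbf{v})=\mathbf{u}$. The composition $\psi\circ\varphi$ is a monomorphism of $G$ fixing $\langle\mathbf{u}\rangle$ pointwise; by Theorem~1.3 it is an automorphism of $G$. A short diagram chase (injectivity of $\varphi,\psi$ combined with surjectivity of their composite forces $\varphi$ itself to be surjective) then shows $\varphi\in\mathrm{Aut}(G)$, placing $\mathbf{u}$ and $\mathbf{v}$ in the same $\mathrm{Aut}(G)$-orbit.

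For the second step, I need to show that a random $k$-tuple of length $n$ is rigid with probability tending to $1$. One may assume $G$ is non-elementary (the elementary case is immediate). By classical random-walk results on hyperbolic groups (Maher, Maher--Tiozzo, Gou\"ezel), each coordinate $u_i$ is loxodromic, with translation length growing linearly in $n$, except with exponentially small probability, and a ping-pong argument in the Gromov boundary then yields that $\langle\mathbf{u}\rangle$ is free, quasiconvex, and malnormal of rank $k$ with probability tending to $1$.

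The main obstacle is to upgrade these Schottky-type properties to relative one-endedness of $G$ over $\langle\mathbf{u}\rangle$. By Dunwoody accessibility, $G$ has only finitely many conjugacy classes of splittings over finite subgroups, and every vertex stabilizer of such a splitting is a proper quasiconvex subgroup of infinite index in $G$. A standard genericity estimate for random walks on non-elementary hyperbolic groups shows that, for each such vertex stabilizer $V$, the probability that $u_i$ lies in $\bigcup_{g\in G}gVg^{-1}$ tends to $0$ as $n\to\infty$. A union bound over the finitely many conjugacy classes of splittings, over their (finitely many) vertex stabilizers, and over $i=1,\ldots,k$ yields that, with probability tending to $1$, no non-trivial splitting of $G$ over a finite subgroup has $\langle\mathbf{u}\rangle$ elliptic, i.e., $G$ is one-ended relative to $\langle\mathbf{u}\rangle$. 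Combining the two steps finishes the proof.
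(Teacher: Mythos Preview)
Your two-step outline matches the paper's, but both steps contain genuine gaps.

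\textbf{Step 1.} Equational Noetherianity does not give what you claim. Noetherianity says that any system of \emph{equations} is equivalent to a finite subsystem; it says nothing about the infinite conjunction of \emph{inequations} $\bigwedge_{g\neq 1}\phi(g)\neq 1$ expressing injectivity. In a Noetherian Zariski topology an arbitrary union of closed sets (here the non-injective locus $\bigcup_{g\neq 1}\{\bar y:g(\bar y)=1\}$ inside $\mathrm{Hom}(G,G)$) need not be constructible, so there is no reason the injective locus should be cut out by finitely many inequations. The paper avoids this by taking ``rigid'' to mean that $G$ has \emph{no} non-trivial splitting relative to $\langle u\rangle$, not merely no splitting over finite groups. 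This forces the relative $\mathcal{Z}$-JSJ decomposition, and hence the relative modular group $\mathrm{Mod}_{\langle u\rangle}(G)$, to be trivial; the shortening argument (Theorem~\ref{short}) then yields a \emph{finite} test set $F\subset G\setminus\{1\}$ with no modular caveat, and it is this finiteness that makes the existential formula go through. With your weaker notion of rigidity the relative modular group can be infinite, and then Theorem~\ref{short} only gives a test set that works up to precomposition by modular automorphisms, which destroys the first-order formulation.

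\textbf{Step 2.} The assertion that accessibility gives only finitely many conjugacy classes of splittings over finite subgroups is false: already $F_2$ has infinitely many pairwise non-conjugate cyclic free factors (one per primitive conjugacy class), hence infinitely many distinct one-edge splittings. Dunwoody accessibility bounds the complexity of a \emph{maximal} splitting, not the number of splittings, so your union bound over ``finitely many vertex stabilizers'' does not go through. The paper's argument is quite different: in the virtually free case it combines a Whitehead-graph filling criterion with the axis-matching result of Maher--Sisto (Propositions~\ref{MS} and~\ref{rigid}), and for general hyperbolic groups it quotes work in progress of Guirardel and Levitt establishing that random tuples are rigid in the stronger sense. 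Neither argument reduces to avoiding conjugates of a fixed finite family of subgroups.
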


Theorem \ref{randomgeneric} is a consequence of the two following results. Say a tuple $u$ is \textit{rigid} if the group $G$ does not split non-trivially relative to $u$. 

\begin{prop}Fix an integer $k\geq 1$. In a hyperbolic group, the probability that a random $k$-tuple of length $n$ is rigid tends to one as $n$ tends to infinity.\end{prop}

\begin{prop}Fix an integer $k\geq 1$. Let $G$ be a hyperbolic group, and let $u\in G^k$. If $u$ is rigid, then $u$ is type-determined.\end{prop}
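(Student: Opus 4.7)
The plan is to use the hypothesis $\mathrm{tp}(u) = \mathrm{tp}(v)$ to produce homomorphisms $\phi, \psi : G \to G$ with $\phi(u) = v$ and $\psi(v) = u$, and then to exploit the rigidity of $u$ to show that the composition $\alpha := \psi \circ \phi$---an endomorphism of $G$ fixing $u$ pointwise---is an automorphism. Once this is established, $\phi$ is injective and $\psi$ is surjective; since hyperbolic groups are Hopfian, $\psi$ is itself an automorphism, and $\phi = \psi^{-1} \circ \alpha$ is a composition of automorphisms. Thus $\phi \in \mathrm{Aut}(G)$ and $\phi(u) = v$, as desired.

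To produce $\phi$, one uses that $G$ is finitely presented (being hyperbolic), say $G = \langle g_1, \ldots, g_m \mid R \rangle$, and writes each $u_i$ as a word $w_i(g_1, \ldots, g_m)$. Consider the existential formula
\[
\theta(x_1, \ldots, x_k) := \exists y_1, \ldots, y_m \left( \bigwedge_{r \in R} r(y_1, \ldots, y_m) = 1 \; \wedge \; \bigwedge_{i=1}^{k} x_i = w_i(y_1, \ldots, y_m) \right).
\]
Since $\theta(u)$ holds (take $y_j = g_j$) and $\mathrm{tp}(u) = \mathrm{tp}(v)$, also $\theta(v)$ holds; the witnesses $h_1, \ldots, h_m$ satisfy the relations of $G$ and verify $v_i = w_i(h)$, so $g_j \mapsto h_j$ extends to a homomorphism $\phi : G \to G$ with $\phi(u) = v$. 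The morphism $\psi$ is obtained by the symmetric argument, using the same existential formula built from a word-expression of $v$ in the generators.

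The heart of the argument is showing that $\alpha$ is an automorphism. Rigidity of $u$ implies in particular that $G$ is one-ended relative to the finitely generated subgroup $\langle u \rangle$ (a splitting over a finite subgroup with $\langle u \rangle$ elliptic would be a non-trivial splitting relative to $u$), so by the relative co-Hopf theorem recalled in the introduction it suffices to show that $\alpha$ is a monomorphism. If $\alpha$ had non-trivial kernel, one would apply the shortening argument---available for hyperbolic groups with torsion thanks to Guirardel and Reinfeldt-Weidmann---to the sequence of iterates $(\alpha^n)_{n \geq 1}$, all fixing $u$. A Bestvina-Paulin limiting procedure applied to such a sequence yields a non-trivial isometric action of $G$ on a real tree in which $u$ is elliptic, and the Rips machine then extracts from this action a non-trivial splitting of $G$ relative to $u$, contradicting rigidity. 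This step---converting a non-injective endomorphism fixing a rigid tuple into an honest splitting---is the main obstacle of the proof and is precisely where the torsion-sensitive shortening machinery central to the paper is essential; once settled, the rest of the deduction is purely formal.
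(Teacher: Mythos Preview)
Your overall plan---produce $\phi,\psi$ with $\phi(u)=v$, $\psi(v)=u$, and then promote $\alpha=\psi\circ\phi$ to an automorphism---is reasonable, and the formal deductions in the last step (injectivity of $\phi$, surjectivity of $\psi$, Hopfianity) are fine. The difficulty is entirely in your injectivity argument for $\alpha$, and there the proof has a genuine gap.

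You propose to apply a Bestvina--Paulin limiting procedure to the sequence of iterates $(\alpha^n)_{n\geq 1}$. But this requires the sequence to diverge: one needs $\ell(\alpha^n)\to\infty$ (for some subsequence) in order to obtain a non-trivial limiting $\mathbb{R}$-tree. Nothing rules out that the iterates stay bounded; for instance, if $\alpha$ happened to be idempotent (a retraction onto a proper subgroup containing $u$) then $\alpha^n=\alpha$ for all $n\geq 1$ and no rescaled limit exists. Even if the sequence does diverge, the natural conclusion of the shortening machinery (Proposition~\ref{propo}) is that the stable kernel is non-trivial---which is exactly what you assumed, so no contradiction arises. In short, the single non-injective endomorphism $\alpha$ does not by itself manufacture a splitting of $G$ relative to $u$; you need a \emph{family} of homomorphisms with trivial stable kernel, and the iterates of $\alpha$ do not provide one.

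The paper avoids this problem by packaging the shortening argument differently. Since $u$ is rigid, the modular group relative to $\langle u\rangle$ reduces to inner automorphisms, and Theorem~\ref{short} then yields a \emph{finite} set $F=\{w_1,\dots,w_\ell\}\subset G\setminus\{1\}$ such that every non-injective endomorphism of $G$ fixing $u$ (up to conjugacy) kills some $w_i$. This finite test set is inserted directly into the existential formula: one asks not merely for an endomorphism sending $u$ to $y$, but for one that in addition satisfies $w_i(x_1,\dots,x_n)\neq 1$ for all $i$. The identity witnesses $\theta(u)$; equality of $\exists$-types gives $\theta(v)$, hence an endomorphism $\phi$ with $\phi(u)=v$ that does not kill any $w_i$, and one concludes injectivity (and then, via the relative co-Hopf property, that $\phi$ is an automorphism) in one stroke. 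The point is that the hard analytic content of the shortening argument has been distilled, once and for all, into the finite object $F$, which can then be quoted inside a first-order formula. Your approach tries to re-run the analytic argument after the fact on a specific $\alpha$, and that is where it breaks down.
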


\begin{rque}In fact, we shall prove the following stronger result: if $u\in G^k$ is rigid, then it is $\exists$-\textit{type-determined}, meaning that any $k$-tuple $v$ with the same $\exists$-type as $u$ belongs to the same $\mathrm{Aut}(G)$-orbit.\end{rque}

The first proposition above relies on a result of Maher and Sisto proved in \cite{MS17} together with a work in progress of Guirardel and Levitt (see Section \ref{generic} for further details). The second proposition will be proved in Section \ref{generic}.

\subsection*{Strategy for proving almost-homogeneity}In Section \ref{motivation}, we prove that the group $\mathrm{SL}_2(\mathbb{Z})$ is homogeneous (more precisely, it is $\exists$-homogeneous). This example will serve as a model for proving almost-homogeneity of virtually free groups. Recall that $\mathrm{SL}_2(\mathbb{Z})$ splits as $\mathrm{SL}_2(\mathbb{Z})=A\ast_C B$ where $A\simeq \mathbb{Z}/4\mathbb{Z}$, $B\simeq\mathbb{Z}/6\mathbb{Z}$ and $C\simeq\mathbb{Z}/2\mathbb{Z}$. In particular, it is a virtually free group. Let us give a brief outline of the proof of the homogeneity of $G=\mathrm{SL}_2(\mathbb{Z})$. Let $u,v\in G^k$ be two $k$-tuples. Suppose that $u$ and $v$ have the same type.

\emph{Step 1.} By means of first-order logic, we prove that there exists a monomorphism $\phi : G \hookrightarrow G$ sending $u$ to $v$, and a monomorphism $\psi : G \hookrightarrow G$ sending $v$ to $u$. 

\emph{Setp 2.} We modify (if necessary) the monomorphism $\phi$ to get an automorphism $\sigma$ of $G$ that maps $u$ to $v$. 

We shall use a similar approach to prove that virtually free groups are uniformly almost-homogeneous. In Section \ref{mono}, we generalize the method used by Perin and Sklinos to prove the homogeneity of free groups, and we prove the following result (which can be compared with Step 1 above). 

\begin{prop}\label{propintro}
Let $G$ be a virtually free group, let $k\geq1$ be an integer and let $u,v\in G^k$. Let $U$ be the maximal one-ended subgroup of $G$ relative to $\langle u\rangle$, and let $V$ be the maximal one-ended subgroup of $G$ relative to $\langle v\rangle$. If $u$ and $v$ have the same type, then
\begin{itemize}
\item[$\bullet$]there exists an endomorphism $\phi$ of $G$ that maps $u$ to $v$ and whose restriction to $U$ is injective,
\item[$\bullet$]there exists an endomorphism $\psi$ of $G$ that maps $v$ to $u$ and whose restriction to $V$ is injective.
\end{itemize}
In fact, it is enough to suppose that $u$ and $v$ have the same $\forall\exists$-type.
\end{prop}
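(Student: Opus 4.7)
The plan is to follow the strategy of Perin--Sklinos \cite{PS12} and Ould Houcine \cite{OH11}, adapted to virtually free groups by means of the shortening argument for hyperbolic groups with torsion of Guirardel \cite{Gui08} and Reinfeldt--Weidmann \cite{RW14}. By symmetry, it is enough to produce the endomorphism $\phi$; the endomorphism $\psi$ is obtained by the same argument after exchanging the roles of $u$ and $v$. The concrete goal is to construct a $\forall\exists$-formula $\theta(\bar{x})$ with $k$ free variables such that (i)~$\theta(u)$ holds in $G$, and (ii) any tuple $\bar{v}$ satisfying $\theta(\bar{v})$ is the image of $u$ under an endomorphism of $G$ whose restriction to $U$ is injective. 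Granted such a formula, the conclusion is immediate, since $\theta(u)$ is witnessed by the identity endomorphism of $G$, and the $\forall\exists$-type equality of $u$ and $v$ then transfers $\theta(u)$ to $\theta(v)$.

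The first ingredient is an $\exists$-formula encoding the mere existence of an endomorphism sending $u$ to the target. Since virtually free groups are finitely presented, write $G = \langle s_1, \ldots, s_n \mid r_1, \ldots, r_m\rangle$ and express each coordinate of $u$ as a word $u_i = w_i(\bar{s})$ in the generators. The formula
\[
\eta(\bar{x}) \;=\; \exists y_1, \ldots, y_n \;\Bigl(\bigwedge_{j} r_j(\bar{y})=1 \;\wedge\; \bigwedge_{i} x_i = w_i(\bar{y})\Bigr)
\]
is satisfied by $u$ (take $\bar{y} = \bar{s}$), and any witness of $\eta(\bar{v})$ yields an endomorphism mapping $u$ to $\bar{v}$. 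The crux is to reinforce $\eta$ into a formula $\theta$ that further guarantees injectivity on $U$. For this, I would invoke a Merzlyakov-style argument: the modular group $\mathrm{Mod}(U)$, arising from the JSJ decomposition of $U$ over finite subgroups relative to $\langle u\rangle$, acts by pre-composition on the set of homomorphisms $G \to G$ that send $u$ to a given tuple. The shortening argument of \cite{Gui08, RW14} implies that any homomorphism in this set which is of minimal length in its $\mathrm{Mod}(U)$-orbit (with respect to a fixed word metric on $G$) is automatically injective on $U$; otherwise, a Bestvina--Paulin rescaling of a sequence of counter-examples would produce a non-faithful action of $U$ on an $\mathbb{R}$-tree, contradicting minimality. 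Packaging this minimality as a $\forall\exists$-clause on top of $\eta$ gives the desired formula $\theta(\bar{x})$.

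The main obstacle is the rigorous construction of $\theta$ in the torsion setting and the verification that the minimality condition is first-order. Concretely, one must combine the JSJ theory over finite subgroups of \cite{Gui08} with the torsion-refined shortening argument of \cite{RW14}, and one must argue that only finitely many modular shortenings need to be tested, so that the minimality condition can be quantified over a bounded set of parameters. In the torsion-free case, these steps are carried out in \cite{PS12, OH11}; the extension to torsion parallels those arguments but requires care at several technical points (in particular, the role of finite-order automorphisms of vertex groups in $\mathrm{Mod}(U)$), and this is where the bulk of the novel technical content underlying Proposition \ref{propintro} lies.
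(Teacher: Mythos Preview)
Your proposal has a genuine gap at the point you yourself flag as the ``main obstacle.'' The condition you want to impose --- that the endomorphism $\phi$ is \emph{minimal} in its $\mathrm{Mod}(U)$-orbit, or equivalently that no precomposition $\phi\circ\sigma$ with $\sigma\in\mathrm{Mod}_{\langle u\rangle}(U)$ kills an element of the finite test set $F$ from Theorem~\ref{short} --- is a universal quantification over the infinite group $\mathrm{Mod}_{\langle u\rangle}(U)$, and there is no reason to expect it to be first-order. Your hope that ``only finitely many modular shortenings need to be tested'' is unfounded: Theorem~\ref{short} gives a finite set $F$ of potential kernel witnesses, not a finite set of modular automorphisms. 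The paper circumvents this by replacing ``$\phi\circ\sigma$ for some $\sigma\in\mathrm{Mod}$'' with the weaker first-order condition ``some $\psi$ that is $\Delta$-related to $\phi_{\vert U}$'' (Lemma~\ref{deltarelies}), where $\Delta$ is the $\mathcal{Z}$-JSJ of $U$ relative to $\langle u\rangle$ (not the splitting over finite groups you mention). This is expressible by an existential formula, but it only captures one implication: every $\phi\circ\sigma$ is $\Delta$-related to $\phi_{\vert U}$, not conversely.

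Because of this asymmetry, the paper's argument is not direct but by contradiction, and requires a substantial extra ingredient that your proposal omits entirely. Assuming the conclusion fails, the $\forall\exists$-formula $\mu$ (built from $\Delta$-relatedness and the test set $F$) transfers from $v$ to $u$; plugging in the identity endomorphism yields a non-injective \emph{preretraction} $p:U\to G$ (Definition~\ref{pre}). One then shows (Step~2) that some QH vertex of $\Delta$ is not sent isomorphically to a conjugate of itself, builds from this a centered splitting of $G$ with a non-degenerate preretraction (Step~3), and derives a contradiction from Lemma~\ref{cyclic2}, which asserts that virtually free groups admit no non-degenerate preretractions of centered splittings. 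That lemma, proved via Lemmas~\ref{deuxieme} and~\ref{penible}, is where the virtually-free hypothesis is genuinely used, and it is the key piece missing from your outline.
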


\begin{rque}Let us emphasize that Proposition \ref{propintro} together with Proposition \ref{introp} imply that finitely generated free groups are homogeneous. Indeed, for free groups, the existence of the endomorphisms $\phi$ and $\psi$ above is equivalent to the existence of an automorphism of $G$ sending $u$ to $v$, by Proposition \ref{introp}.\end{rque}

In the proof of the homogeneity of $\mathbf{F}_n$ and $\mathrm{SL}_2(\mathbb{Z})$, the existence of these endomorphisms $\phi$ and $\psi$ implies the existence of an automorphism of $G$ that sends $u$ to $v$. Unfortunately, the counterexample \ref{contre-exemple2} above shows that this step fails in the general case. In Section \ref{presque-homogénéité}, we circumvent this problem and prove that virtually free groups are uniformly almost-homogeneous. The key ingredient is the cocompactness of the Stallings deformation space of a virtually free group (see Proposition \ref{compacité} for a precise statement).

\subsection*{Acknowledgements}It is a pleasure to express my gratitude to my advisor Vincent Guirardel for his precious help and his many comments on previous versions of this paper.

\hypersetup{colorlinks=true, linkcolor=black}
\tableofcontents
\hypersetup{colorlinks=true, linkcolor=red}

\section{Preliminaries}

\subsection{First-order logic}\label{logic}

For detailed background on first-order logic, we refer the reader to \cite{Mar02}.

A first-order formula in the language of groups is a finite formula using the following symbols: $\forall$, $\exists$, $=$, $\wedge$, $\vee$, $\Rightarrow$, $\neq$, $1$ (standing for the identity element), ${}^{-1}$ (standing for the inverse), $\cdot$ (standing for the group multiplication) and variables $x,y,g,z\ldots$ which are to be interpreted as elements of a group. A variable is free if it is not bound by any quantifier $\forall$ or $\exists$. A sentence is a formula without free variables. A $\forall\exists$-formula is a formula of the form $\theta(\mathbf{x}):\forall\mathbf{y}\exists\mathbf{z}\varphi(\mathbf{x},\mathbf{y},\mathbf{z})$. An existential formula (or $\exists$-formula) is a formula in which the symbol $\forall$ does not appear. 

Given a formula $\theta(\mathbf{x})$ with $k\geq 0$ free variables, and a $k$-tuple $\mathbf{u}$ of elements of a group $G$, we say that $\mathbf{u}$ satisfies $\theta(\mathbf{x})$ if the statement $\theta(\mathbf{u})$ is true in $G$. 

Roughly speaking, a group $G$ is said to be homogeneous if the power of expression of first-order logic is strong enough to distinguish between two tuples that belong to different orbits under the action of the automorphism group of $G$. Here below are precise definitions.

\begin{de}Let $G$ be a group. We say that two $k$-tuples $\mathbf{u}$ and $\mathbf{v}$ of elements of $G$ have the same type if, for every first-order formula $\theta(\mathbf{x})$ with $k$ free variables, the statement $\theta(\mathbf{u})$ is true in $G$ if and only if the statement $\theta(\mathbf{v})$ is true in $G$. We use the notation $\mathrm{tp}(\mathbf{u})=\mathrm{tp}(\mathbf{v})$. In the same way, we say that two tuples have the same $\exists$-type (resp. $\forall\exists$-type) if they satisfy the same $\exists$-formulas (resp. $\forall\exists$-formulas). We use the notation $\mathrm{tp}_{\exists}(\mathbf{u})=\mathrm{tp}_{\exists}(\mathbf{v})$ (resp. $\mathrm{tp}_{\forall\exists}(\mathbf{u})=\mathrm{tp}_{\forall\exists}(\mathbf{v})$).\end{de}

\begin{de}Let $G$ be a group. We say that $G$ is homogeneous if, for every integer $k$ and for all $k$-tuples $\mathbf{u}$ and $\mathbf{v}$ having the same type, there exists an automorphism $\sigma$ of $G$ sending $\mathbf{u}$ to $\mathbf{v}$. In the same way, we define $\exists$-homogeneity by considering only $\exists$-formulas, and $\forall\exists$-homogeneity by considering only $\forall\exists$-formulas.\end{de}

\begin{de}Let $G$ be a group. We say that $G$ is almost-homogeneous if for every integer $k\geq 1$ and for every $k$-tuple $\mathbf{u}\in G^k$, \[\vert\lbrace v\in G^k \ \vert \ \mathrm{tp}(\mathbf{u})=\mathrm{tp}(\mathbf{v})\rbrace/\mathrm{Aut}(G)\vert < \infty. \]
We say that $G$ is uniformly almost-homogeneous if there exists an integer $N\geq 1$ such that for every integer $k\geq 1$ and every $k$-tuple $\mathbf{u}\in G^k$, \[\vert\lbrace v\in G^k \ \vert \ \mathrm{tp}(\mathbf{u})=\mathrm{tp}(\mathbf{v})\rbrace/\mathrm{Aut}(G)\vert \leq N.\]
In the same way, we define (uniform) $\exists$-almost-homogeneity by considering only $\exists$-formulas, and (uniform) $\forall\exists$-almost-homogeneity by considering only $\forall\exists$-formulas.\end{de}
\begin{rque}Note that a group is homogeneous if and only if it is uniformly almost-homogeneous with $N=1$.\end{rque}

\subsection{Virtually free groups and Stallings splittings}

A finitely generated group $G$ is virtually free if and only if it splits as a minimal graph of groups with finite vertex groups, or equivalently if it acts minimally on a simplicial tree by simplicial automorphisms, without inversions and with finite vertex stabilizers. Such a tree, endowed with the action of $G$, is called a \emph{Stallings tree} (or splitting) of $G$. The \emph{Stallings deformation space}, denoted by $\mathcal{D}(G)$, is the set of Stallings trees of $G$ up to equivariant isometry. 

Let $T$ be a Stallings tree of $G$, and let $e=[v,w]$ be an edge of $T$. Suppose that $G_v=G_e$ and that $v$ and $w$ are in distinct orbits. Collapsing every edge in the orbit of $e$ to a point produces a new Stallings tree $T'$ of $G$. We say that $T'$ is obtained from $T$ by an \emph{elementary collapse}, and that $T$ is obtained from $T'$ by an \emph{elementary expansion}. This elementary expansion of $T'$ introduces a new vertex group $G_v$ by using the isomorphism $G_w\simeq {G_w}\ast_{G_v} G_v$. These two operations are called \emph{elementary deformations}. A \emph{slide move} is defined as an elementary expansion followed by an elementary collapse. A Stallings tree $T$ of $G$ is said to be \emph{reduced} if there is no edge of the form $e=[v,w]$ with $G_v=G_e$, i.e.\ if one cannot perform any elementary collapse in $T$. Two Stallings trees of $G$ are connected by a sequence of elementary deformations. Two reduced Stallings trees of $G$ are connected by a sequence of slide moves. A vertex of $T$ is called \emph{redundant} if it has degree $2$. The tree $T$ is called \emph{non-redundant} if every vertex is non-redundant.

The following result is a particular case of Lemmas 2.20 and 2.22 in \cite{DG11}. For the definition of an isomorphism of graphs of groups, we refer the reader to Definition 2.19 in \cite{DG11}.

\begin{prop}\label{DG}Let $T$ and $T'$ be two Stallings trees of $G$. The two following assertions are equivalent.
\begin{enumerate}
\item The quotient graphs of groups $T/G$ and $T'/G$ are isomorphic.
\item There exist an automorphism $\sigma$ of $G$ and a $\sigma$-equivariant isometry $f : T \rightarrow T'$. 
\end{enumerate}
\end{prop}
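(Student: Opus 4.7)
\begin{proof1}
The plan is to leverage the standard Bass--Serre dictionary between graphs of groups and $G$-actions on trees.

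For the direction $(2) \Rightarrow (1)$, any $\sigma$-equivariant isometry $f : T \to T'$ descends to a simplicial isomorphism $\bar f : T/G \to T'/G$ of quotient graphs. After choosing lifts of vertices and edges in $T$ (and in $T'$), I would use $\sigma$ to identify the stabiliser of each chosen lift in $T$ with the stabiliser of its image in $T'$, correcting by conjugation in $G$ so as to land on the chosen representatives above the image vertex or edge. The resulting maps on vertex and edge groups commute with the boundary inclusions, yielding an isomorphism of graphs of groups in the sense of Definition 2.19 of \cite{DG11}.

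For the direction $(1) \Rightarrow (2)$, suppose $\Phi : T/G \to T'/G$ is an isomorphism of graphs of groups. Both $T$ and $T'$ can be reconstructed from their quotients via the Bass--Serre universal cover construction: once one fixes a maximal subtree of the quotient graph and generators for each non-tree edge, one obtains a concrete presentation of the fundamental group of the graph of groups, together with an explicit description of the tree as cosets of vertex groups glued along edges. The isomorphism $\Phi$ transports all this data, and I would use this to produce both an isomorphism on fundamental groups --- which is an automorphism $\sigma$ of $G$ once both fundamental groups are canonically identified with $G$ --- and a $\sigma$-equivariant simplicial isomorphism $f : T \to T'$.

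The main technical obstacle is that the identification of $\pi_1(T/G)$ with $G$ depends on choices (basepoint, maximal subtree, edge orientations), so $\Phi$ only determines an outer automorphism of $G$ a priori. Pinning down an honest automorphism $\sigma$ from the combinatorial isomorphism $\Phi$ requires tracking the extra data carried by an isomorphism of graphs of groups --- in particular the elements implementing the change of Bass--Serre presentation for non-tree edges, as specified in Definition 2.19 of \cite{DG11} --- and then checking that the resulting equivariant lift $f$ is well defined up to the inner ambiguity built into the universal cover construction. Unwinding these compatibilities is precisely the content of Lemmas 2.20 and 2.22 of \cite{DG11}, and is where I expect the bulk of the routine but delicate verification to lie.
\end{proof1}
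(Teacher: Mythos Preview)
Your proposal is in line with the paper, which in fact does not give an independent proof of this proposition at all: the paper simply states that the result is a particular case of Lemmas~2.20 and~2.22 of \cite{DG11} and refers to Definition~2.19 there for the notion of isomorphism of graphs of groups. Your sketch of the Bass--Serre dictionary is correct in outline and lands on exactly the same citation, so there is nothing to add.
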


In the latter case, we use the notation $T'=T^{\sigma}$. This is not ambiguous since we consider $\mathcal{D}(G)$ up to equivariant isometry. 

We shall need the following proposition that claims, in a sense, that $\mathcal{D}(G)$ is cocompact under the action of $\mathrm{Aut}(G)$.

\begin{prop}\label{compacité}Let $G$ be a virtually free group. There exist finitely many trees $T_1,\ldots, T_n$ in $\mathcal{D}(G)$ such that, for every non-redundant tree $T\in \mathcal{D}(G)$, there exist an automorphism $\sigma$ of $G$ and an integer $1\leq k\leq n$ such that $T=T_k^{\sigma}$.
\end{prop}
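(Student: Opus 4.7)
The plan is to apply Proposition \ref{DG} and reduce the claim to the combinatorial statement that only finitely many isomorphism classes of quotient graphs of groups $\Gamma = T/G$ arise as $T$ ranges over non-redundant Stallings trees of $G$. Picking one representative $T_k$ in each such class then produces the required finite list.

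The argument rests on three standard finiteness inputs for a virtually free group $G$: the orders of finite subgroups are bounded by some integer $M = M(G)$ (for instance by $[G:F]$ where $F$ is any free subgroup of finite index), there are only finitely many conjugacy classes of finite subgroups in $G$, and the torsion-free rank of the abelianization $G^{\mathrm{ab}}$ is finite. Consequently, vertex and edge groups of any Stallings splitting come from a fixed finite list of isomorphism types, each edge-to-vertex inclusion is a monomorphism between two finite groups of order at most $M$ (so lies in a finite set of possibilities), and the first Betti number $b_1(\Gamma)$ is bounded above, because the map collapsing vertex groups exhibits the free group $\pi_1(\Gamma) = F_{b_1(\Gamma)}$ as a quotient of $G$.

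The key step is to bound the number $V$ of vertices of $\Gamma$. Non-redundancy says that every vertex of $T$ has valence at least $3$, which at the level of the quotient reads $\sum_{\bar e \ni \bar v}[G_{\bar v} : G_{\bar e}] \geq 3$ for every $\bar v$, since the valence in $T$ of any preimage of $\bar v$ is exactly this sum. Dividing by $|G_{\bar v}|$, summing over $\bar v$, and observing that each edge is counted twice (once per endpoint, including for loops), I obtain
\[
2 \sum_{\bar e} \frac{1}{|G_{\bar e}|} \;\geq\; 3 \sum_{\bar v} \frac{1}{|G_{\bar v}|}.
\]
Combined with the rational Euler characteristic formula $\chi(G) = \sum_{\bar v} 1/|G_{\bar v}| - \sum_{\bar e} 1/|G_{\bar e}|$, this gives $-\chi(G) \geq \tfrac12 \sum_{\bar v} 1/|G_{\bar v}| \geq V/(2M)$, so $V \leq -2M\chi(G)$. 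The edge count is then bounded by $V - 1 + b_1(\Gamma)$.

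The main obstacle I anticipate is the careful bookkeeping around the non-redundancy condition, together with the handling of the degenerate cases $G$ finite or $G \simeq \mathbb{Z}$ (where $\chi(G) \geq 0$): one checks directly in those cases that no non-redundant Stallings tree has any edge, so that the statement holds trivially with $T_1$ the trivial one-vertex splitting. With the uniform bounds on $V$ and $E$ in hand, the finitely many admissible vertex/edge labels and inclusions then yield finitely many isomorphism classes of quotient graphs of groups, and Proposition \ref{DG} concludes.
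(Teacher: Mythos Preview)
Your argument is correct and follows a genuinely different route from the paper's. The paper proceeds in two stages: first (Lemma~\ref{premierlemme}) it shows there are only finitely many \emph{reduced} Stallings trees up to $\mathrm{Aut}(G)$, using that any two reduced trees are connected by slide moves and therefore share the same number of orbits of vertices and edges and the same vertex groups; then (Lemma~\ref{secondlemme}) it shows, via a K\"onig-type argument on the constancy of $\sum_v(\deg v-2)$, that only finitely many non-redundant trees can be obtained from a fixed reduced tree by elementary expansions. Your approach bypasses the expand/collapse machinery entirely: the non-redundancy hypothesis feeds directly into an Euler-characteristic inequality that bounds the number of vertices and edges of the quotient graph of groups uniformly, after which finiteness of the possible labels and Proposition~\ref{DG} finish. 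Your proof is shorter and more self-contained; the paper's approach is more structural and dovetails with the deformation-space language used elsewhere.

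One small imprecision worth flagging: the case $\chi(G)=0$ comprises all two-ended virtually free groups (for instance $D_\infty=\mathbb{Z}/2\ast\mathbb{Z}/2$), not just $\mathbb{Z}$; and for such an infinite $G$ the one-vertex tree is \emph{not} in $\mathcal{D}(G)$, since its vertex stabilizer is all of $G$. This does no harm, however: every Stallings tree of a two-ended group is a line, hence every vertex has degree~$2$, so there are no non-redundant Stallings trees at all and the statement is vacuous in that case.
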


Before proving this proposition, we need two lemmas.

\begin{lemme}\label{premierlemme}Let $G$ be a virtually free group. There exist finitely many reduced trees $T_1,\ldots, T_n$ in $\mathcal{D}(G)$ such that, for every reduced tree $T\in \mathcal{D}(G)$, there exists an integer $1\leq k\leq n$ such that $T=T_k^{\sigma}$ for some $\sigma\in\mathrm{Aut}(G)$.\end{lemme}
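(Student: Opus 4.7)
My approach is to translate the problem, via Proposition \ref{DG}, into a statement about quotient graphs of groups: that proposition asserts that two reduced trees $T, T' \in \mathcal{D}(G)$ lie in the same $\mathrm{Aut}(G)$-orbit if and only if their quotient graphs of groups $T/G$ and $T'/G$ are isomorphic. Thus the lemma reduces to showing that $G$ admits, up to isomorphism, only finitely many reduced graph-of-groups decompositions with finite vertex groups.

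To prove this finiteness, I would use three facts about a virtually free group $G$. First, the orders of the finite subgroups of $G$ are uniformly bounded by some integer $M=M(G)$, and up to isomorphism only finitely many finite groups occur as subgroups of $G$; hence every $G_v$ and $G_e$ in any Stallings tree is drawn from a finite list. Second, the rational Euler characteristic
\[
\chi(G) = \sum_{v \in V(\Gamma)} \frac{1}{|G_v|} - \sum_{e \in E(\Gamma)} \frac{1}{|G_e|},
\]
where $\Gamma := T/G$, is an invariant of $G$. Third, the reduced condition forces, for every non-loop edge $e=[v,w]$ of $\Gamma$, the edge group $G_e$ to be properly contained in both $G_v$ and $G_w$, so that $|G_e|\leq |G_v|/2$ and $|G_e|\leq |G_w|/2$; in particular $1/|G_e| \geq 1/|G_v| + 1/|G_w|$, since $2/\max(|G_v|,|G_w|) \geq 1/|G_v|+1/|G_w|$.

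Summing the latter inequality over non-loop edges and plugging into the Euler identity gives a bound of the form $n_{\geq 2}\leq M\,|\chi(G)|$ on the number of vertices of effective degree $\geq 2$; the number of leaf vertices is controlled by the Grushko-type decomposition of $G$ (each leaf has $|G_v|\geq 2$ in a reduced splitting and so contributes a nontrivial finite free factor), and the total number of edges is then bounded by $m \leq n+M\,|\chi(G)|$ via the same identity. Alternatively one may simply invoke Dunwoody's accessibility theorem for finitely presented groups, which directly bounds the number of edges in any reduced splitting over finite subgroups. In either case, once the underlying graph is of bounded size, and with finitely many choices for the isomorphism types of vertex and edge groups together with finitely many injections $G_e \hookrightarrow G_v$ for each edge, we are left with only finitely many isomorphism classes of reduced graphs of groups, which by Proposition \ref{DG} translates back into the finitely many $\mathrm{Aut}(G)$-orbits required. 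The main technical obstacle is obtaining simultaneous bounds on the number of vertices and edges: the Euler identity alone links them without bounding either individually, so the reduced condition (or accessibility) must be fully exploited to close the estimate.
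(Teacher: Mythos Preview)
Your reduction via Proposition \ref{DG} is exactly what the paper does. After that, however, the paper takes a much shorter path: since any two reduced Stallings trees of $G$ are connected by slide moves, they all have the \emph{same} number $p$ of vertex orbits, the \emph{same} number $q$ of edge orbits, and even the same collection of vertex groups. With $p$, $q$ and the maximal vertex-group order $r$ fixed, there are only finitely many isomorphism classes of graphs of finite groups, and the lemma follows immediately. No Euler-characteristic bookkeeping or accessibility is needed.

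Your alternative route via $\chi(G)$ and accessibility is viable in principle, but the details as written contain slips. The justification ``$2/\max(|G_v|,|G_w|) \geq 1/|G_v|+1/|G_w|$'' is false (take $|G_v|=2$, $|G_w|=4$); the correct intermediate step uses $2/\min$, which does yield $1/|G_e|\geq 1/|G_v|+1/|G_w|$. More seriously, a leaf vertex group in a reduced Stallings splitting is \emph{not} a free factor of $G$ (think of $\mathbb{Z}/4\ast_{\mathbb{Z}/2}\mathbb{Z}/6$), so the Grushko argument does not bound the number of leaves. What does bound them is that each leaf group is a \emph{maximal} finite subgroup, and $G$ has only finitely many conjugacy classes of those; with this fix your Euler-characteristic argument can be completed. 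Your fallback to Dunwoody accessibility is correct and self-contained. Either way the argument works, but the slide-move observation gives the result in two lines.
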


\begin{proof}
Given two reduced Stallings trees $T$ and $T'$ of $G$, one can pass from $T$ to $T'$ by a sequence of slide moves. Consequently, all reduced Stallings trees of $G$ have the same number of orbits of vertices, say $p$, and the same number of orbits of edges, say $q$. Moreover, all reduced Stallings trees have the same vertex groups. Let $r$ be the maximal order of such a vertex group. Now, observe that there are only finitely many isomorphism classes of graphs of finite groups with $p$ vertices, $q$ edges, and whose vertex groups have order $\leq r$. By Proposition \ref{DG}, it is sufficient to conclude.\end{proof}

\begin{lemme}\label{secondlemme}Let $G$ be a virtually free group, and let $T$ be a reduced Stallings tree of $G$. There are only finitely many non-redundant Stallings trees that can be obtained from $T$ by a sequence of elementary expansions.\end{lemme}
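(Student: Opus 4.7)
My plan is to bound the number of elementary expansions in any sequence $T \to \cdots \to T''$ yielding a non-redundant tree $T''$, and to count the choices at each step; this will give finitely many possible sequences, hence finitely many trees $T''$.

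I would first observe that since $T''$ is a Stallings tree, the $G$-action on $T''$ is minimal, which forces every vertex to have tree-degree at least $2$: a vertex $v$ of tree-degree $1$ would satisfy $G_v = G_e$ on its unique adjacent edge, and removing the $G$-orbit of $v$ (a union of tree-leaves) would yield a proper $G$-invariant subtree, contradicting minimality. Combined with non-redundancy ($\deg_{\mathrm{tree}}(v) \neq 2$), this gives $\deg_{\mathrm{tree}}(v) \geq 3$ at every vertex of $T''$ (assuming $T''$ has at least one edge; the trivial single-vertex case is immediate).

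Next, I would note that the vertex groups of $T''$ are subgroups of vertex groups of $T$ by induction on the number of expansions, hence have order bounded by $M := \max_{v \in V(T/G)} |G_v|$. Applying the Euler characteristic identity
\[
-2\,\chi(G) \;=\; \sum_{\bar v \in V(T''/G)} \frac{\deg_{\mathrm{tree}}(\bar v) - 2}{|G_{\bar v}|},
\]
the tree-degree bound gives each summand $\geq 1/M$, so $|V(T''/G)|/M \leq -2\,\chi(G)$, bounding $|V(T''/G)| \leq 2M|\chi(G)|$. Since each expansion adds one vertex orbit, the number $k$ of expansion steps is bounded by a constant depending only on $T$ and $G$.

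Finally, at each step the available choices are finitely many: a vertex orbit of the current tree (finitely many, since the tree has bounded complexity along the way), a subgroup $H \leq G_w$ for the new vertex and edge groups (finitely many since $G_w$ is finite), and a way to partition the edges at $w$ between $w$ and the new vertex (finitely many subsets of the finite edge set at $w$). Since both the number of steps and the number of choices per step are bounded, only finitely many sequences of expansions starting from $T$ are possible, yielding finitely many non-redundant trees $T''$. The main technical point to verify carefully is the minimality consequence ($\deg_{\mathrm{tree}} \geq 2$ everywhere), which is precisely what makes the Euler characteristic bound tight enough; the rest is essentially bookkeeping.
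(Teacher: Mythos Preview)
Your proof is correct and takes a different route from the paper's. The paper argues as follows: the sum $\sum_{\bar v \in V(T_k/G)}(\deg(\bar v)-2)$, with degrees taken in the quotient graph, is invariant under elementary expansion; since the number of degree-$1$ vertices in $T_k/G$ is bounded by the number of conjugacy classes of maximal finite subgroups of $G$, the number of vertices of degree $\geq 3$ is bounded as well, so for large $k$ there is a long path of degree-$2$ vertices in $T_k/G$; along such a path the vertex groups have bounded order, so two adjacent ones eventually coincide, producing a redundant vertex in the tree; finally K\"onig's lemma gives finiteness. Your argument instead works directly with tree-degrees: minimality forbids degree~$1$, non-redundancy forbids degree~$2$, so every tree-degree is $\geq 3$, and the rational Euler characteristic identity $-2\chi(G)=\sum_{\bar v}(\deg_{T''}(v)-2)/|G_{\bar v}|$ bounds $|V(T''/G)|$ (hence the number of expansion steps) outright. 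Your approach is more direct and yields an explicit bound without the K\"onig step; the paper's approach stays at the level of elementary combinatorics of the quotient graph and does not invoke $\chi$. One cosmetic point: when you speak of ``the finite edge set at $w$'' you mean the finitely many $G_w$-orbits of edges at $w$ (equivalently the half-edges at $\bar w$ in the quotient), not the possibly infinite set of tree edges at $w$; the conclusion is unaffected.
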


\begin{proof}We claim that for any sequence of Stallings trees $(T_k)_{k\in\mathbb{N}}$ such that $T_0=T$ and such that $T_{k+1}$ is obtained from $T_k$ by an elementary expansion, the tree $T_k$ is redundant for $k$ sufficiently large. This is enough to conclude. Indeed, given any Stallings tree $T'$ of $G$, there are only finitely many Stallings trees that can be obtained from $T'$ by an elementary expansion, so the lemma follows from the previous claim combined with König's lemma.

We now prove the claim. Let us show that for $k$ large enough, one can find an arbitrarily long path in $T_k/G$ in which each vertex has degree 2. Let $V_k$ denote the set of vertices of $T_k/G$. For each vertex $v\in V_k$, we denote by $\deg(v)$ the degree of $v$ in $T_k/G$. We make the following observation: the sum $S_k=\sum_{v\in V_k}(\deg(v)-2)$ is constant along the sequence $(T_k)$, as illustrated below.

\begin{figure}[!h]
\includegraphics[scale=0.7]{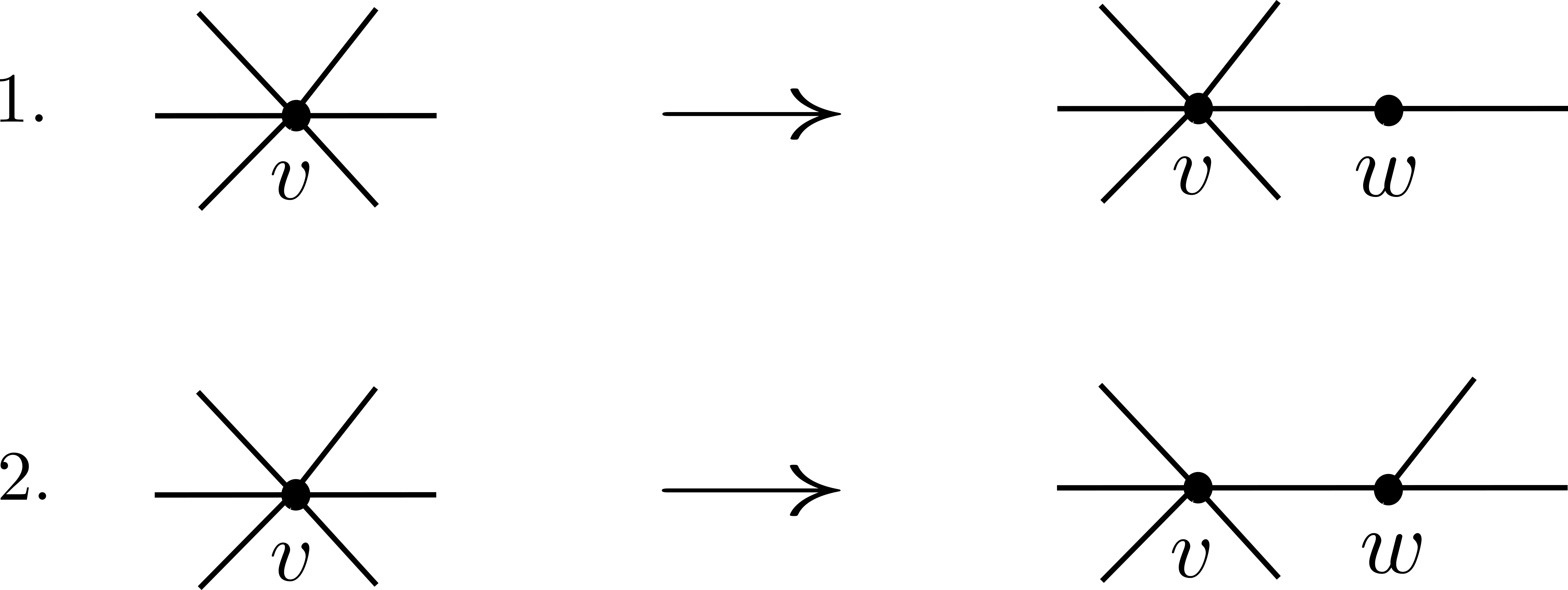}
\caption{The vertex $v$ has degree 6 in $T_k/G$. In the first case, the elementary expansion gives rise to a vertex $w$ of degree $2$ in $T_{k+1}/G$. In the second case, the elementary expansion gives rise to a vertex $w$ of degree $3$ in $T_{k+1}/G$, and the degree of $v$ decreases to 5. In both cases, the sums (6-2)+(2-2) and (5-2)+(3-2) are equal to 4.}
\label{expansion}
\end{figure}

Moreover, the number of vertices of degree $1$ in $T_k/G$ is bounded by the number of conjugacy classes of maximal finite subgroups of $G$. It follows that the number of vertices of degree $\geq 3$ in $T_k/G$ is bounded independently from $k$. Therefore, for any $m\geq 1$, there exists an integer $k$ and a path $[v_1,v_2,\ldots,v_m]$ in $T_k/G$ such that, for every $\ell\in\llbracket 1,m\rrbracket$, $v_{\ell}$ has degree 2 in $T_k/G$ and $G_{v_{\ell+1}}\leq G_{v_\ell}$. Now, taking for $m$ the maximal order of a finite subgroup of $G$, one has necessarily $G_{v_{\ell}}=G_{v_{\ell+1}}$ for some $\ell\in\llbracket 1,m\rrbracket$. Hence, each preimage of $v_{\ell}$ in $T_k$ has degree 2, so $T_k$ is redundant.\end{proof}

We can now prove Proposition \ref{compacité}.

\begin{proof}Let $T_1,\ldots, T_n$ be the reduced Stallings trees of $G$ given by Lemma \ref{premierlemme}. For each $T_k$ in this finite set, Lemma \ref{secondlemme} provides us with a finite collection of non-redundant trees $T_{k,1},\ldots ,T_{k,r_k}$. Let $\mathcal{T}$ be the union of these sets $\lbrace T_{k,1},\ldots ,T_{k,r_k}\rbrace$, for $k$ from $1$ to $n$. Let $T$ be a non-redundant tree in $\mathcal{D}(G)$. We claim that $T$ belongs to the orbit of an element of $\mathcal{T}$ under the action of $\mathrm{Aut}(G)$. First, note that $T$ collapses onto a reduced Stallings tree $T'$. In other words, $T$ is obtained from $T'$ by a sequence of elementary expansions. By Lemma \ref{premierlemme}, $T'=T_k^{\sigma}$ for some automorphism $\sigma$ of $G$ and some $k\in\llbracket 1,n\rrbracket$. Thanks to Lemma \ref{secondlemme}, $T=T_{k,i}^{\sigma}$ for some $i\in\llbracket 1,r_k\rrbracket$.\end{proof}

\subsection{The JSJ decomposition and the modular group}\label{25}

\begin{de}\label{FBO}A group $G$ is called a \emph{finite-by-orbifold group} if it is an extension \[1\rightarrow F\rightarrow G \rightarrow \pi_1(\mathcal{O})\rightarrow 1\]where $\mathcal{O}$ is a compact hyperbolic 2-orbifold possibly with boundary, and $F$ is an arbitrary finite group called the fiber. We call \emph{extended boundary subgroup} of $G$ the preimage in $G$ of a boundary subgroup of $\pi_1(\mathcal{O})$. We define in the same way \emph{extended conical subgroups}. In the case where $\mathcal{O}$ has only conical singularities, i.e.\ has no mirrors, we say that $G$ is a conical finite-by-orbifold group.
\end{de}

\begin{de}\label{QH}A vertex $v$ of a graph of groups is said to be \emph{quadratically hanging} (denoted by QH) if its stabilizer $G_v$ is a finite-by-orbifold group $F\rightarrow G \rightarrow \pi_1(\mathcal{O})$ such that $\mathcal{O}$ has non-empty boundary, and such that any incident edge group is finite or contained in an extended boundary subgroup. We also say that $G_v$ is QH. 
\end{de}

We denote by $\mathcal{Z}$ the class of groups that are either finite or virtually cyclic with infinite center.

Let $G$ be a hyperbolic group and let $H$ be a subgroup of $G$. Assume that $G$ is one-ended relative to $H$. The group $G$ has a canonical JSJ decomposition $\Delta$ over $\mathcal{Z}$ relative to $H$. We refer the reader to \cite{GL16} for a construction of $\Delta$ using the tree of cylinders. Below are the properties of $\Delta$ that will be useful in the sequel.
\begin{itemize}
\item[$\bullet$]The graph $\Delta$ is bipartite, with every edge joining a vertex carrying a virtually cyclic group to a vertex carrying a non-virtually-cyclic group.
\item[$\bullet$]There are two kinds of vertices of $\Delta$ carrying a non-cyclic group: rigid ones, and QH ones. If $v$ is a QH vertex of $\Delta$, every incident edge group $G_e$ coincides with an extended boundary subgroup of $G_v$. Moreover, given any extended boundary subgroup $B$ of $G_v$, there exists a unique incident edge $e$ such that $G_e=B$.
\item[$\bullet$]The action of $G$ on the associated Bass-Serre tree $T$ is acylindrical in the following strong sense: if an element $g\in G$ of infinite order fixes a segment of length $\geq 2$ in $T$, then this segment has length exactly 2 and its midpoint has virtually cyclic stabilizer.
\item Let $v$ be a vertex of $T$, and let $e,e'$ be two distinct edges incident to $v$. If $G_v$ is not virtually cyclic, then the group $\langle G_e,G_{e'}\rangle$ is not virtually cyclic.
\end{itemize}

\begin{de}\label{modul}Let $G$ be a hyperbolic group one-ended relative to a subgroup $H<G$. We denote by $\mathrm{Aut}_H(G)$ the subgroup of $\mathrm{Aut}(G)$ consisting of all automorphisms whose restriction to $H$ is the conjugacy by an element of $G$.

The \emph{modular group} $\mathrm{Mod}_H(G)$ of $G$ relative to $H$ is the subgroup of $\mathrm{Aut}_H(G)$ consisting of all automorphisms $\sigma$ satisfying the following conditions:
\begin{itemize}
\item[$\bullet$]the restriction of $\sigma$ to each non-QH vertex group of the $\mathcal{Z}$-JSJ splitting of $G$ relative to $H$ is the conjugacy by an element of $G$,
\item[$\bullet$]the restriction of $\sigma$ to each finite subgroup of $G$ is the conjugacy by an element of $G$,
\item[$\bullet$]$\sigma$ acts trivially on the underlying graph of the $\mathcal{Z}$-JSJ splitting relative to $H$.
\end{itemize}
\end{de}

\begin{rque}Rather than defining $\mathrm{Mod}_H(G)$ as above by imposing conditions on the action on vertex groups, one could define it by giving generators: twists around edges, and certain automorphisms of vertex groups (see for example \cite{Per11} and \cite{RW14}). These two definitions yield the same subgroup of $\mathrm{Aut}_H(G)$, up to finite index. We refer the reader to \cite{GL15} Section 5 for further discussion about this issue.\end{rque}

The following result is an immediate consequence of Theorem 4.6 in \cite{GL15}.

\begin{te}\label{indice fini}The modular group $\mathrm{Mod}_H(G)$ has finite index in $\mathrm{Aut}_H(G)$.\end{te}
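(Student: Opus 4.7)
The plan is to exploit canonicity of the JSJ decomposition $\Delta$ relative to $H$ and peel off finite-index constraints vertex by vertex, so that the three defining conditions of $\mathrm{Mod}_H(G)$ fail only on a finite-index subgroup of $\mathrm{Aut}_H(G)$.

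First I would observe that any $\sigma\in\mathrm{Aut}_H(G)$ sends $\Delta$ to another $\mathcal{Z}$-JSJ splitting of $G$ relative to $H$; by canonicity of $\Delta$, $\sigma$ acts on the quotient graph $\Delta/G$. Since $\Delta/G$ is finite, the stabilizer $\mathrm{Aut}_H^0(G)$ of the identity permutation has finite index in $\mathrm{Aut}_H(G)$, which takes care of the third bullet in Definition \ref{modul}.

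Next, I would deal with the non-QH vertex groups of $\Delta$. Each element $\sigma\in\mathrm{Aut}_H^0(G)$ induces, after composition with a suitable inner automorphism, an automorphism of every non-QH vertex group $G_v$ that globally preserves the collection of incident edge groups. For the rigid vertex groups, the crucial input is the JSJ-theoretic fact (contained in Theorem~4.6 of \cite{GL15}) that the group of automorphisms of $G_v$ preserving the incident edge groups, modulo inner automorphisms, is finite; this is the deep ingredient and will be the main obstacle, since it rests on the Bestvina--Paulin / Rips machinery for hyperbolic groups, suitably adapted to the presence of torsion. For the vertices carrying a $\mathcal{Z}$-group (finite or virtually cyclic with infinite center), the outer automorphism group is finite, so the corresponding restriction is inner up to finite index as well. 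Passing to the intersection of these finitely many finite-index subgroups, we obtain a finite-index subgroup on which the restriction to every non-QH vertex group is a conjugation, realising the first bullet.

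Finally, to handle the second bullet, I would invoke the fact that a hyperbolic group contains only finitely many conjugacy classes of finite subgroups, each of which has finite automorphism group. Hence the action of $\mathrm{Aut}_H^0(G)$ on conjugacy classes of maximal finite subgroups factors through a finite group, and the induced homomorphism to the finite product $\prod \mathrm{Aut}(F_i)$ (over representatives of these classes) has a kernel of finite index on which each finite subgroup is sent to itself by conjugation. Intersecting all the finite-index subgroups produced in these three steps yields exactly $\mathrm{Mod}_H(G)$, proving the theorem.
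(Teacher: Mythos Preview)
Your sketch is essentially correct and corresponds to the strategy underlying Theorem~4.6 of \cite{GL15}; however, the paper does not reprove this and simply invokes that theorem as a black box. Concretely, the paper observes that $\mathrm{Mod}_H(G)/\mathrm{Inn}(G)$ contains the subgroup $\mathrm{Out}^1(G,\{H\}^{(t)})$ of \cite{GL15}, which by their Theorem~4.6 has finite index in $\mathrm{Out}(G,\{H\}^{(t)})=\mathrm{Aut}_H(G)/\mathrm{Inn}(G)$, and this suffices. So your route is not so much a different proof as an unpacking of the cited one.

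Two small points about your write-up. First, your appeal to ``Theorem~4.6 of \cite{GL15}'' for the finiteness of $\mathrm{Out}$ of a rigid vertex group relative to its incident edge groups is misplaced: that theorem is precisely the global finite-index statement you are trying to prove, not a local statement about rigid vertices. The ingredient you actually want there is the Bestvina--Paulin/Rips--Sela rigidity for rigid vertex groups (this is an input to \cite{GL15}, not a consequence). Second, in your third step the homomorphism to $\prod\mathrm{Aut}(F_i)$ is not well defined, since an automorphism fixing the conjugacy class of $F_i$ need not fix $F_i$ itself; you should map to $\prod\mathrm{Out}(F_i)$ after first passing to the finite-index subgroup fixing each conjugacy class. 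Finally, the intersection of your three finite-index subgroups is \emph{contained in} $\mathrm{Mod}_H(G)$ rather than equal to it, but of course that is all you need.
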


\begin{proof}According to Theorem 4.6 in \cite{GL15}, the modular group $\mathrm{Mod}_H(G)/\mathrm{Inn}(G)$ contains a subgroup, denoted by $\mathrm{Out}^1(G,\lbrace H\rbrace^{(t)})$ in \cite{GL15}, whose index is finite in the group $\mathrm{Aut}_H(G)/\mathrm{Inn}(G)$, denoted by $\mathrm{Out}(G,\lbrace H\rbrace^{(t)})$ in \cite{GL15}.\end{proof} 

\subsection{Related homomorphisms and preretractions}\label{32}

In the sequel, we denote by $\mathrm{ad}(g)$ the inner automorphism $h\mapsto ghg^{-1}$.

\begin{de}[Related homomorphisms]\label{reliés2}
Let $G$ be a hyperbolic group and let $H$ be a subgroup of $G$. Assume that $G$ is one-ended relative to $H$. Let $G'$ be a group. Let $\Delta$ be the canonical JSJ splitting of $G$ over $\mathcal{Z}$ relative to $H$. Let $\phi$ and $\phi'$ be two homomorphisms from $G$ to $G'$. We say that $\phi$ and $\phi'$ are $\Delta$-related if the two following conditions hold:
\begin{itemize}
\item[$\bullet$]for every non-QH vertex $v$ of $\Delta$, there exists an element $g_v\in G'$ such that \[{\phi'}_{\vert G_v}=\mathrm{ad}({g_v})\circ \phi_{\vert G_v};\]
\item[$\bullet$]for every finite subgroup $F$ of $G$, there exists an element $g\in G'$ such that \[{\phi'}_{\vert F}=\mathrm{ad}({g})\circ \phi_{\vert F}.\]
\end{itemize}
\end{de}

\begin{de}[Preretraction]\label{pre}
Let $G$ be a hyperbolic group, and let $H$ be a subgroup of $G$. Assume that $G$ is one-ended relative to $H$. Let $\Delta$ be the canonical JSJ splitting of $G$ over $\mathcal{Z}$ relative to $H$. A preretraction of $G$ is an endomorphism of $G$ that is $\Delta$-related to the identity map. More generally, if $G$ is a subgroup of $G'$, a preretraction from $G$ to $G'$ is a homomorphism that is $\Delta$-related to the inclusion of $G$ into $G'$.
\end{de}

The following easy lemma shows that being $\Delta$-related can be expressed in first-order logic.

\begin{lemme}\label{deltarelies}
Let $G$ be a hyperbolic group and let $H$ be a subgroup of $G$. Assume that $G$ is one-ended relative to $H$. Let $G'$ be a group. Let $\Delta$ be the canonical JSJ splitting of $G$ over $\mathcal{Z}$ relative to $H$. Let $\lbrace g_1,\ldots ,g_n\rbrace$ be a generating set of $G$. There exists an existential formula $\theta(x_1,\ldots , x_{2n})$ with $2n$ free variables such that, for every $\phi,\phi'\in \mathrm{Hom}(G,G')$, $\phi$ and $\phi'$ are $\Delta$-related if and only if $G'$ satisfies $\theta\left(\phi(g_1),\ldots , \phi(g_n),\phi'(g_1),\ldots ,\phi'(g_n)\right).$
\end{lemme}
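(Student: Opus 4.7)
The plan is to construct $\theta$ explicitly as a finite conjunction of ``conjugation conditions'', one for each non-QH vertex of $\Delta$ and one for each conjugacy class of finite subgroup of $G$. The approach rests on two finiteness facts: since $\Delta$ is the $\mathcal{Z}$-JSJ decomposition of the finitely generated group $G$, the graph $\Delta$ has only finitely many non-QH vertices $v_1,\ldots,v_r$, and each vertex group $G_{v_i}$ is finitely generated; since $G$ is hyperbolic, it has only finitely many conjugacy classes of finite subgroups, so in particular finitely many conjugacy classes of maximal finite subgroups, with representatives $F_1,\ldots,F_p$.

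For each $i$, I would fix a finite generating tuple $h^{(i)}_1,\ldots,h^{(i)}_{m_i}$ of $G_{v_i}$, and for each $j$, I would enumerate the elements $k^{(j)}_1,\ldots,k^{(j)}_{l_j}$ of the finite group $F_j$. Since $g_1,\ldots,g_n$ generate $G$, each such element is a specific word in these generators, say $h^{(i)}_s=w^{(i)}_s(g_1,\ldots,g_n)$ and $k^{(j)}_s=u^{(j)}_s(g_1,\ldots,g_n)$. Writing $\underline{x}=(x_1,\ldots,x_n)$ and $\overline{x}=(x_{n+1},\ldots,x_{2n})$, the desired formula is
\begin{align*}
\theta(x_1,\ldots,x_{2n}) := \exists y_1\cdots\exists y_r\,\exists z_1\cdots\exists z_p\,\Bigl(&\bigwedge_{i,s} w^{(i)}_s(\overline{x})=y_i\,w^{(i)}_s(\underline{x})\,y_i^{-1}\\
&\wedge\ \bigwedge_{j,s} u^{(j)}_s(\overline{x})=z_j\,u^{(j)}_s(\underline{x})\,z_j^{-1}\Bigr),
\end{align*}
which is manifestly existential with $2n$ free variables.

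The equivalence is then verified directly. If $\phi$ and $\phi'$ are $\Delta$-related, the conjugators supplied by Definition \ref{pre} provide witnesses for the $y_i$ and $z_j$. Conversely, if $G'$ satisfies $\theta(\phi(g_1),\ldots,\phi(g_n),\phi'(g_1),\ldots,\phi'(g_n))$, then the equalities corresponding to each $v_i$ hold on a generating set of $G_{v_i}$, hence on all of $G_{v_i}$, yielding the first clause of Definition \ref{pre}. For the second clause, given an arbitrary finite subgroup $F<G$, I pick a maximal finite subgroup $F'$ containing $F$ and an element $g\in G$ with $F'=gF_jg^{-1}$ for some $j$; a direct computation then shows that $\phi'(g)\,z_j\,\phi(g)^{-1}$ witnesses ${\phi'}_{|F}=\mathrm{ad}(\cdot)\circ\phi_{|F}$.

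The only slightly delicate step is this last reduction from ``every finite subgroup'' in Definition \ref{pre} to the finite set of representatives $F_1,\ldots,F_p$: it uses both the finiteness of the number of conjugacy classes of finite subgroups in a hyperbolic group and the obvious fact that a conjugation condition for a subgroup is implied by the same condition for any overgroup. Apart from this, the proof is purely formal bookkeeping, consistent with the lemma's description as ``easy''.
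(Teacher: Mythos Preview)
Your proof is correct and follows essentially the same strategy as the paper: reduce the $\Delta$-relatedness condition to finitely many conjugation conditions (on generators of the non-QH vertex groups and on representatives of finite subgroups), then package these into a single existential formula.

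The only minor difference is in the choice of finite-subgroup representatives. You use representatives $F_1,\ldots,F_p$ of the conjugacy classes of \emph{maximal} finite subgroups of $G$, invoking hyperbolicity to guarantee finiteness, and then reduce an arbitrary finite $F$ to one of the $F_j$ via your explicit conjugator $\phi'(g)\,z_j\,\phi(g)^{-1}$. The paper instead takes representatives of the conjugacy classes of finite subgroups of the QH vertex groups of $\Delta$: since every finite subgroup of $G$ is elliptic in the JSJ tree and hence conjugate into some vertex group, the finite subgroups lying in non-QH vertices are already covered by the first clause, so only the QH ones need separate treatment. Both choices work and lead to the same style of formula; yours carries a little harmless redundancy but avoids appealing to the bipartite structure of $\Delta$.
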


\begin{proof}
Firstly, remark that there exist finitely many (say $p\geq 1$) conjugacy classes of finite subgroups of QH vertex groups of $\Delta$ (indeed, a QH vertex group possesses finitely many conjugacy classes of finite subgroups, and $\Delta$ has finitely many vertices). Denote by $F_1,\ldots ,F_p$ a system of representatives of those conjugacy classes. Denote by $R_1,\ldots ,R_m$ the non-QH vertex groups of $\Delta$. Remark that these groups are finitely generated since $G$ and the edge groups of $\Delta$ are finitely generated. Denote by $\lbrace A_i\rbrace_{1\leq i\leq p+m}$ the union of $\lbrace F_i\rbrace_{1\leq i\leq p}$ and $\lbrace R_i\rbrace_{1\leq i\leq m}$. For every $i\in\llbracket 1,m+p\rrbracket$, let $\lbrace a_{i,1},\ldots ,a_{i,k_i}\rbrace$ be a finite generating set of $A_i$. For every $i\in\llbracket 1,m+p\rrbracket$ and $j\in\llbracket 1,k_i\rrbracket$, there exists a word $w_{i,j}$ in $n$ letters such that $a_{i,j}=w_{i,j}(g_1,\ldots ,g_n)$. Let \[\theta(x_1,\dots,x_{2n}):\exists u_1\ldots\exists u_m \bigwedge_{i=1}^{m+p}\bigwedge_{j=1}^{k_i}w_{i,j}(x_1,\ldots ,x_n)=u_iw_{i,j}(x_{n+1},\ldots ,x_{2n}){u_i}^{-1}. \]

Since $\phi(a_{i,j})=w_{i,j}(\phi(g_1),\ldots ,\phi(g_n))$ and $\phi'(a_{i,j})=w_{i,j}\left(\phi'(g_1),\ldots ,\phi'(g_n)\right)$ for every $i\in\llbracket 1,m+p\rrbracket$ and $j\in\llbracket 1,k_i\rrbracket$, the homomorphisms $\phi$ and $\phi'$ are $\Delta$-related if and only if the sentence $\theta\left(\phi(g_1),\ldots , \phi(g_n),\phi'(g_1),\ldots ,\phi'(g_n)\right)$ is satisfied by $G'$.
\end{proof}

The proof of the following lemma is identical to that of Proposition 7.2 in \cite{And18}.

\begin{lemme}\label{lemmeperin}Let $G$ be a hyperbolic group. Suppose that $G$ is one-ended relative to a subgroup $H$. Let $\Delta$ be the $\mathcal{Z}$-JSJ splitting of $G$ relative to $H$. Let $\phi$ be a preretraction of $G$. If $\phi$ sends every QH group isomorphically to a conjugate of itself, then $\phi$ is injective.
\end{lemme}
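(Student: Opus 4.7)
Let $T$ denote the Bass--Serre tree of $\Delta$. I would first establish that $\phi$ is injective on every vertex stabilizer of $T$: on QH vertex groups this is the hypothesis; on non-QH vertex groups and on finite subgroups the preretraction condition (Definition \ref{pre}) says $\phi$ agrees with conjugation by some element of $G$, hence is injective. Since edge stabilizers lie in $\mathcal{Z}$ and sit inside vertex stabilizers, $\phi$ is injective on edge groups as well. Consequently, any non-trivial element of $\ker(\phi)$ fixes no vertex of $T$, and must therefore act loxodromically.

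Next I would construct a $\phi$-equivariant simplicial map $f\colon T\to T$. For each representative vertex $v$, Definition \ref{reliés2} applied to $\phi$ and $\mathrm{id}_G$, together with the QH-isomorphism hypothesis, supplies an element $h_v\in G$ such that $\phi(G_v) = h_v G_v h_v^{-1}$; I set $f(v):=h_v\cdot v$, extend $G$-equivariantly to all vertices, and extend to edges by mapping each edge to the geodesic between the images of its endpoints. By construction $\phi(G_v)\subseteq G_{f(v)}$, so $f$ is well defined and $\phi$-equivariant. The crucial consequence is that for every $g\in\ker(\phi)$ and every $u\in T$ one has $f(g\cdot u)=\phi(g)\cdot f(u)=f(u)$.

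Now suppose, for contradiction, that $\ker(\phi)$ contains some $g\neq 1$, and pick a vertex $u$ on its axis. Then $u$ and $g\cdot u$ are distinct vertices of $T$ sharing the same $f$-image, so $f$ must fold the segment $[u,g\cdot u]$: there exist an interior vertex $w$ and two distinct edges $e,e'$ incident to $w$ with $f(e)=f(e')$. I would then distinguish two cases according to whether $G_w$ is virtually cyclic or not. If $G_w$ is non-virtually-cyclic, the last bullet of Section \ref{25} guarantees that $\langle G_e,G_{e'}\rangle$ is non-virtually-cyclic; but $\phi(\langle G_e,G_{e'}\rangle)$ stabilizes the edge $f(e)=f(e')$, hence is virtually cyclic (edge stabilizers lie in $\mathcal{Z}$), contradicting the injectivity of $\phi$ on $G_w$. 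If $G_w$ is virtually cyclic, the endpoints of $e$ and $e'$ opposite $w$ are non-virtually-cyclic vertices $v,v'$; using the strong acylindricity property recalled in Section \ref{25} (an infinite-order element fixing a segment of length $\geq 2$ must fix a segment of length exactly $2$ with virtually cyclic midpoint stabilizer) together with injectivity of $\phi$ on $G_v$ and $G_{v'}$, one reaches a similar contradiction.

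The main obstacle is the case analysis at the folding vertex: one must rule out both configurations using the precise acylindricity and edge-group properties listed after Definition \ref{QH}, and one must also ensure that $f$ truly folds rather than collapses. The latter can be arranged by replacing $f$ with a representative (in its $\phi$-equivariant homotopy class) having minimal simplicial volume in its image, so that no edge is mapped to a point unless forced, and then invoking the standard fact that a path with equal endpoints in a tree must contain a backtrack.
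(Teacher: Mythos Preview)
Your overall strategy—construct a $\phi$-equivariant simplicial map $f\colon T\to T$ using that each $\phi(G_v)$ is elliptic, and then derive a contradiction from a fold along the axis of a loxodromic kernel element—is the standard route, and it is presumably the approach of Proposition~7.2 in \cite{And18}, to which the paper defers the proof. Two small points first: the extension of $f$ to all vertices should be $\phi$-equivariant, not $G$-equivariant (this is just a slip), and the collapse issue can in fact be resolved cleanly: bipartiteness together with the acylindricity bullet in Section~\ref{25} forces $d(f(v),h_w\cdot w)=1$ for every edge $[v,w]$ with $w$ virtually cyclic, so with the natural choice $f(w)=h_w\cdot w$ the map sends edges to edges.

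The real problem is your Case~2. Your Case~1 argument is correct, but note what it actually proves: since $\langle G_e,G_{e'}\rangle$ is never virtually cyclic at a non-virtually-cyclic vertex and $\phi$ is injective there, $f$ is \emph{locally injective at every non-virtually-cyclic vertex}. Hence every fold occurs at a virtually cyclic vertex, so Case~2 is the only case that actually arises, and it carries the entire weight of the proof. Your sentence ``one reaches a similar contradiction'' is not justified: at a virtually cyclic vertex $w$ one has $\langle G_e,G_{e'}\rangle\leq G_w$, which \emph{is} virtually cyclic, so the mechanism of Case~1 simply does not apply. A genuine argument is needed here. One way to proceed: since $\phi|_{G_w}=\mathrm{ad}(h_w)$, one has $\phi(G_e)=G_{h_w\cdot e}$ and $\phi(G_{e'})=G_{h_w\cdot e'}$, both contained in $G_{f(e)}$; any infinite-order element in $G_{h_w\cdot e}\cap G_{h_w\cdot e'}$ then fixes the three edges $h_w\cdot e$, $h_w\cdot e'$, $f(e)$ incident to $h_w\cdot w$, and one must invoke the strong form of acylindricity (the fixed-point set of an infinite-order element is a segment, not merely that each fixed segment has length $\leq 2$) to force $f(e)\in\{h_w\cdot e,\,h_w\cdot e'\}$, and then propagate along the geodesic using $\phi(G_v)=G_{f(v)}$ for non-virtually-cyclic $v$. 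This is genuinely more delicate than Case~1, and your sketch does not supply it.
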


\subsection{Centered graph of groups}\label{centered}

\begin{de}[Centered graph of groups]\label{graphecentre}A graph of groups over $\mathcal{Z}$, with at least two vertices, is said to be centered if the following conditions hold:
\begin{itemize}
\item[$\bullet$]the underlying graph is bipartite, with a QH vertex $v$ such that every vertex different from $v$ is adjacent to $v$;
\item[$\bullet$]every incident edge group $G_e$ coincides with an extended boundary subgroup or with an extended conical subgroup of $G_v$ (see Definition \ref{FBO});
\item[$\bullet$]given any extended boundary subgroup $B$, there exists a unique incident edge $e$ such that $G_e$ is conjugate to $B$ in $G_v$;
\item[$\bullet$]if an element of infinite order fixes a segment of length $\geq 2$ in the Bass-Serre tree of the splitting, then this segment has length exactly 2 and its endpoints are translates of $v$.
\end{itemize}
The vertex $v$ is called the central vertex.
\end{de}

\begin{figure}[!h]
\includegraphics[scale=0.4]{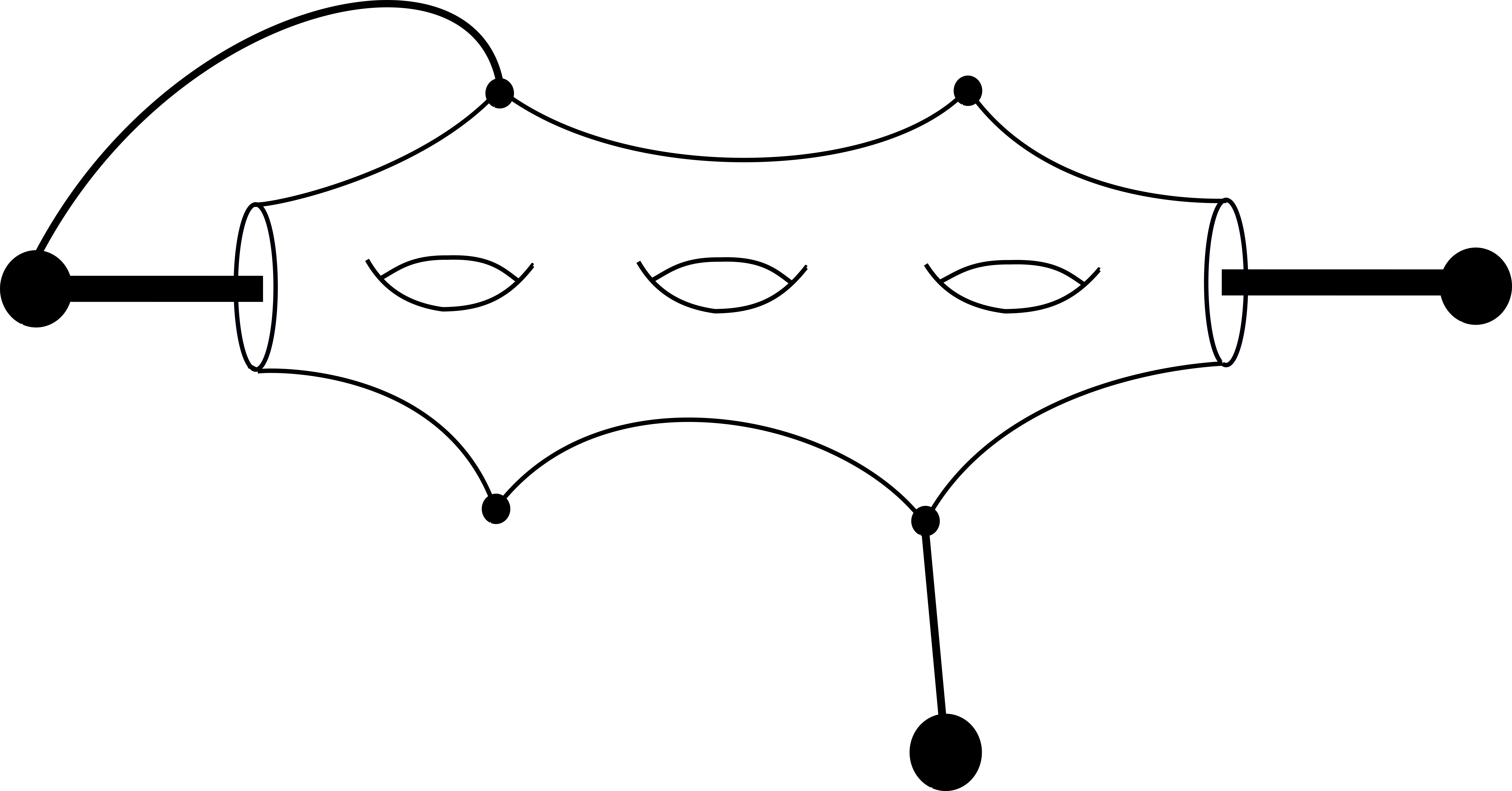}
\caption{A centered graph of groups. Edges with infinite stabilizer are depicted in bold.}
\end{figure}

\begin{de}[Related homomorphisms]\label{reliés}
\normalfont
Let $G$ and $G'$ be two groups. Suppose that $G$ possesses a centered splitting $\Delta$, with central vertex $v$. Let $\phi$ and $\phi'$ be two homomorphisms from $G$ to $G'$. We say that $\phi$ and $\phi'$ are $\Delta$-related if the two following conditions hold:
\begin{itemize}
\item[$\bullet$]for every vertex $w\neq v$, there exists an element $g_w\in G'$ such that \[{\phi'}_{\vert G_w}=\mathrm{ad}(g_w)\circ \phi_{\vert G_w};\]
\item[$\bullet$]for every finite subgroup $F$ of $G$, there exists an element $g\in G'$ such that \[{\phi'}_{\vert F}=\mathrm{ad}(g)\circ \phi_{\vert F}.\]
\end{itemize}
\end{de}

\begin{de}[Preretraction]\label{special}Let $G$ be a hyperbolic group, and let $\Delta$ be a centered splitting of $G$. Let $v$ be the central vertex of $\Delta$. An endomorphism $\phi$ of $G$ is called a preretraction if it is $\Delta$-related to the identity of $G$ in the sense of the previous definition. A preretraction is said to be non-degenerate if if does not send $G_v$ isomorphically to a conjugate of itself.\end{de}

A set of disjoint simple closed curves $S$ on a conical orbifold is said to be \textit{essential} if its elements are non null-homotopic, two-sided, non boundary-parallel, pairwise non parallel, and represent elements of infinite order (in other words, no curve of $S$ circles a singularity). Let $G$ be a group with a centered splitting $\Delta$, with central vertex $v$. The stabilizer $G_v$ of $v$ is a conical finite-by-orbifold group \[F\hookrightarrow G_v \overset{q}{\twoheadrightarrow}\pi_1(\mathcal{O}).\]Let $\phi$ be an endomorphism of $G$. Let $S$ be an essential set of curves on $\mathcal{O}$. An element $\alpha\in S$ is said to be \textit{pinched} by $\phi$ if $\phi\left(q^{-1}(\alpha)\right)$ is finite. The set $S$ is called a \textit{maximal pinched set} for $\phi$ if each element of $S$ is pinched by $\phi$, and if $S$ is maximal for this property. Note that $S$ may be empty.

The proof of the following lemma is identical to that of Lemma 7.4 in \cite{And18}.

\begin{lemme}\label{magique}Let $G$ be a hyperbolic group, and let $\Delta$ be a centered splitting of $G$. Let $v$ be the central vertex of $\Delta$ and let $\phi$ be a non-degenerate $\Delta$-preretraction of $G$. Let $S$ be a maximal pinched set for $\phi$ and let $\Delta(S)$ be the splitting of $G$ obtained from $\Delta$ by replacing the vertex $v$ by the splitting of $G_v$ dual to $S$. Let $v_1,\ldots ,v_n$ be the vertices of $\Delta(S)$ coming from the splitting of $G_v$. We denote by $\mathcal{S}$ the set of edges of $\Delta(S)$ corresponding to the maximal pinched set $S$, and we denote by $\mathcal{F}$ the set of edges of $\Delta(S)$ whose stabilizer is finite. Let $W$ be a connected component of $\Delta(S)\setminus (\mathcal{S}\cup \mathcal{F})$, and let $G_W$ be its stabilizer. Then $\phi(G_W)$ is elliptic in the Bass-Serre tree $T_{\Delta}$ of $\Delta$. Moreover, $W$ contains at most one vertex $w$ different from the vertices $v_k$ coming from the central vertex $v$. If it contains one, then $\phi(G_W)=\phi(G_w)$. See Figure \ref{image} below.\end{lemme}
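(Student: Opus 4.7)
The statement splits into three assertions: (a) $\phi(G_W)$ is elliptic in $T_\Delta$; (b) $W$ contains at most one non-$v_k$ vertex; (c) if such a vertex $w$ exists, then $\phi(G_W)=\phi(G_w)$. The plan is to establish (a) first and to deduce (b) and (c) formally from it. The heart of the argument is showing that $\phi(G_{v_k})$ is elliptic in $T_\Delta$ for each sub-orbifold vertex $v_k$ of $W$; combining this with the fact that $\phi(G_w) = g_w G_w g_w^{-1}$ is elliptic for every non-central $w \in W$, and using the strong acylindricity of $\Delta$ to glue the individual fixed sets along the type A infinite edges of $W$, yields a common fixed vertex $x \in T_\Delta$ for $\phi(G_W)$.

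For the ellipticity of $\phi(G_{v_k})$, consider the action of $G_{v_k}$ on $T_\Delta$ via $\phi$. Since $\phi$ is $\Delta$-related to the identity, every finite subgroup of $G_{v_k}$---in particular each extended conical subgroup---is sent to a conjugate of itself and so acts elliptically. The boundary subgroups of $G_{v_k}$ are of two kinds: those associated to a pinched curve $\alpha \in S$, whose $\phi$-image of $q^{-1}(\alpha)$ is finite by definition of pinching, hence elliptic; and those associated to an original boundary of $\mathcal{O}$, which are contained in a neighbouring $G_w$ and mapped by $\mathrm{ad}(g_w)$ to a conjugate of themselves, hence again elliptic. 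By the Bass--Serre/orbifold analysis of finite-by-orbifold groups, either $\phi(G_{v_k})$ is already elliptic, or the minimal $G_{v_k}$-invariant subtree of $T_\Delta$ is dual to a non-empty essential family $S_0$ of simple closed curves on $\mathcal{O}_{v_k}$, which may be taken disjoint from $S$ and hence essential in $\mathcal{O}$. In the second case, for each $\alpha \in S_0$, the image $\phi(q^{-1}(\alpha))$ sits inside an arc stabilizer of $T_\Delta$, hence inside a $\mathcal{Z}$-subgroup; using the strong acylindricity of $\Delta$ and the constraints it imposes on how $\phi(G_{v_k})$ can embed into $T_\Delta$, one concludes that $\phi(q^{-1}(\alpha))$ must in fact be finite for some such $\alpha$. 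Then $S \cup \{\alpha\}$ is a strictly larger pinched essential set, contradicting the maximality of $S$.

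For (b), let $x$ be a vertex of $T_\Delta$ fixed by $\phi(G_W)$; if $w_1, w_2$ were two distinct non-central vertices of $W$, then $\phi(G_{w_i}) = g_{w_i} G_{w_i} g_{w_i}^{-1}$ would fix $x$, so each $G_{w_i}$ would fix $g_{w_i}^{-1} x$ in $T_\Delta$. Tracking these fixed vertices through the acylindricity of $\Delta$ forces $w_1$ and $w_2$ to belong to the same $G$-orbit in $T_\Delta$, hence to coincide in $\Delta$---a contradiction. For (c), once $w$ is the unique non-central vertex of $W$, one has $\phi(G_W) \subseteq \mathrm{Stab}_{T_\Delta}(x) = g_w G_w g_w^{-1} = \phi(G_w) \subseteq \phi(G_W)$, giving the desired equality. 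The main obstacle is the ellipticity of $\phi(G_{v_k})$: converting a hypothetical non-trivial induced splitting into the pinching of an additional essential curve, contrary to the maximality of $S$, is the delicate point, and it is here that the orbifold Bass--Serre theory, the $\mathcal{Z}$-edge-group structure, and the acylindricity of $\Delta$ are simultaneously exploited, exactly as in Lemma~7.4 of \cite{And18}.
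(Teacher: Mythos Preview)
The paper gives no self-contained proof of this lemma: it simply states that the argument is identical to that of Lemma~7.4 in \cite{And18}. You invoke the same reference and sketch that argument, so at the level of approach you and the paper coincide.

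That said, two places in your sketch are genuinely incomplete rather than merely terse. First, the heart of (a) is the sentence ``one concludes that $\phi(q^{-1}(\alpha))$ must in fact be finite for some such $\alpha$''. This does not follow from $\phi(q^{-1}(\alpha))$ landing in a $\mathcal Z$-edge-group of $T_\Delta$, because $q^{-1}(\alpha)$ is already virtually cyclic and there is no contradiction in its image being infinite virtually cyclic. Producing an actually pinched curve requires the finer orbifold analysis: one studies how the pieces of $\mathcal O_k$ cut along $S_0$ are sent into vertex groups of $T_\Delta$, uses the uniqueness of the boundary--edge correspondence in a centered splitting, and invokes the non-degeneracy of $\phi$ to force some piece to collapse. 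Your appeal to ``acylindricity and the constraints it imposes'' is standing in for this whole step.

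Second, your derivations of (b) and (c) both rest on the line $\mathrm{Stab}_{T_\Delta}(x)=g_wG_wg_w^{-1}$, which presupposes that the common fixed vertex $x$ of $\phi(G_W)$ is the specific translate $g_w\tilde w$. When $G_w$ is virtually cyclic, acylindricity allows $g_wG_wg_w^{-1}$ to fix an adjacent translate of $\tilde v$ as well, so nothing you have written forces $x$ into the orbit of $w$; the chain of equalities in (c) and the orbit argument in (b) then both break. In the source you cite, (c) is obtained by showing directly that each $\phi(G_{v_k})$ is contained in $\phi(G_w)$ for the adjacent $w$---using again the peripheral structure of the sub-orbifold and the way its non-pinched boundaries are carried by $\mathrm{ad}(g_w)$---after which (b) is immediate, without needing to identify $x$.
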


\begin{figure}[!h]
\includegraphics[scale=0.7]{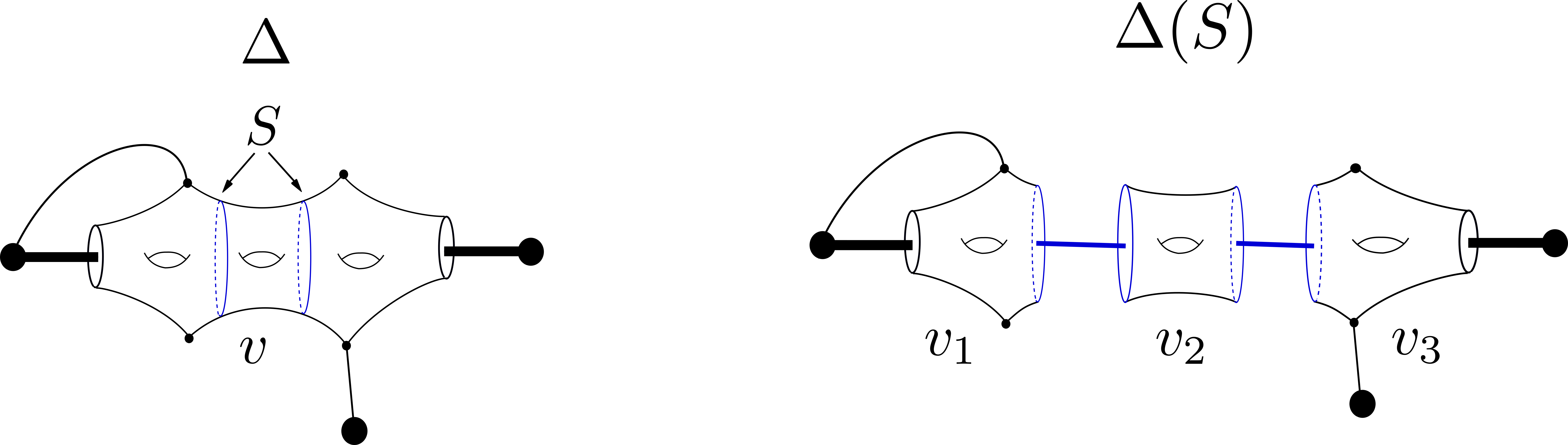}
\caption{On the left, the centered splitting $\Delta$ of $G$. On the right, the splitting $\Delta(S)$ of $G$ obtained from $\Delta$ by replacing the central vertex $v$ by the splitting of $G_v$ dual to $S$.}
\label{image}
\end{figure}

\newpage

\subsection{The shortening argument}

Let $\Gamma$ and $G$ be two hyperbolic groups. A sequence of homomorphisms $(\phi_n:G\rightarrow\Gamma )_{n\in\mathbb{N}}$ is said to be \emph{stable} if, for any $g\in G$, either $\phi_n(g)=1$ for almost all $n$ or $\phi_n(g)\neq 1$ for almost all $n$. The \emph{stable kernel} of the sequence is defined as $\lbrace g\in G \ \vert \ \phi_n(g)=1 \text{ for almost all } n \rbrace$. 

Let $S$ denote a finite generating set of $G$, and let $(X,d)$ denote the Cayley graph of $\Gamma$ (for a given finite generating set). For any $\phi\in \mathrm{Hom}(G,\Gamma)$, we define the \emph{length} of $\phi$ as $\ell(\phi)=\max_{s\in S}d(1,\phi(s))$. 

Let $H$ be a finitely generated subgroup of $G$. Assume that $G$ is one-ended relative to $H$ (meaning that $G$ does not split as $A\ast_C B$ or $A\ast_C$ with $C$ finite and $H$ contained in $A$). The \emph{shortening argument} (Proposition \ref{propo} below) asserts that, given a stable sequence of pairwise distinct homomorphisms $(\phi_n : G \rightarrow \Gamma)_{n\in\mathbb{N}}$ such that, for every $n$, $\phi_n$ coincides with $\phi_0$ on $H$ up to conjugacy by an element of $\Gamma$,
\begin{itemize}
\item[$\bullet$]either the sequence has non-trivial stable kernel,
\item[$\bullet$]or one can shorten $\phi_n$ for every $n$ large enough, meaning that there exists a modular automorphism $\sigma_n\in \mathrm{Aut}_H(G)$ and an element $\gamma_n\in \Gamma$ such that \[\ell(\mathrm{ad}(\gamma_n)\circ \phi_n\circ \sigma_n)<\ell(\phi_n).\]
\end{itemize}

This result has strong consequences on the structure of $\mathrm{Hom}(G,\Gamma)$ (see Theorems \ref{sa} and \ref{short} below). In particular, when $G=\Gamma$, the shortening argument implies that $G$ is co-Hopfian relative to $H$.

Let $\psi: H \rightarrow \Gamma$ be a homomorphism. In the sequel, we denote by $E(\psi)$ the subset of $\mathrm{Hom}(G,\Gamma)$ composed of all homomorphisms which coincide with $\psi$ on $H$ up to conjugation by an element of $\Gamma$. The group $\Gamma \times \mathrm{Aut}_H(G)$ acts on $E(\psi)$ by $(\gamma,\sigma)\cdot\phi=\mathrm{ad}(\gamma)\circ \phi\circ \sigma$.  We say that a homomorphism $\phi\in E(\psi)$ is \emph{short} (with respect to $\psi$ and $H$) if it has minimal length among its orbit under the action of $\Gamma\times \mathrm{Aut}_H(G)$. 

An action of $(G,H)$ on a tree $T$ is an action of $G$ on $T$ such that $H$ fixes a point. In the sequel, if $X\subset T$, $\mathrm{Stab}(X)$ denotes the pointwise stabilizer of $X$. An arc $I$ is said to be unstable is there exists an arc $J\subset I$ such that $\mathrm{Stab}(I)\subsetneq\mathrm{Stab}(J)$. We shall need the following theorem, proved by Guirardel in \cite{Gui08} (Theorem 5.1), which enables us to decompose actions on real trees into tractable building blocks, under certain conditions.

\begin{te}\label{guirardel}Let $G$ be a finitely generated group. Let $H$ be a subgroup of $G$. Consider a minimal and non-trivial action of $(G,H)$ on an $\mathbb{R}$-tree $T$ by isometries. Assume that
\begin{enumerate}
\item $T$ satisfies the ascending chain condition: for any decreasing sequence of arcs $I_1\supset I_2\supset\ldots$ whose lengths converge to $0$, the sequence of their pointwise stabilizers $\mathrm{Stab}(I_1)\subset\mathrm{Stab}(I_2)\subset\ldots$ stabilizes.
\item For any unstable arc $I\subset T$, 
\begin{enumerate}
\item $\mathrm{Stab}(I)$ is finitely generated,
\item $\forall g\in G$, $\mathrm{Stab}(I)^g\subset \mathrm{Stab}(I)\Rightarrow\mathrm{Stab}(I)^g = \mathrm{Stab}(I)$.
\end{enumerate}
\end{enumerate}
Then either $(G,H)$ splits over the stabilizer of an unstable arc, or over the stabilizer of an infinite tripod, or $T$ has a decomposition into a graph of actions where each vertex action is either
\begin{enumerate}
\item simplicial: $G_v\curvearrowright Y_v$ is a simplicial action on a simplicial tree;
\item of Seifert type: the vertex action $G_v\curvearrowright Y_v$ has kernel $N_v$, and the
faithful action $G_v/N_v\curvearrowright Y_v$ is dual to an arational measured foliation on a closed conical 2-orbifold with boundary;
\item axial: $Y_v$ is a line, and the image of $G_v$ in $\mathrm{Isom}(Y_v)$ is a finitely generated group acting with dense orbits on $Y_v$.
\end{enumerate}
\end{te}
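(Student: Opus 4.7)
The plan is to follow the Rips machine paradigm, which is the standard tool for decomposing actions on $\mathbb{R}$-trees. First I would encode the action of $G$ on $T$ by a system of partial isometries on a finite ``transversal'' tree $K\subset T$ (equivalently, a band complex $\Sigma$) whose fundamental group surjects onto $G$ and resolves the action. Since $G$ is finitely generated and $H$ fixes a point, one can choose $K$ to be a finite subtree meeting every $G$-orbit in a controlled way; the fact that $H$ is elliptic allows the generators of $H$ to be built into the complex as a single vertex group rather than as bands.

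Next, I would run the Rips/Bestvina--Feighn moves on $\Sigma$: collisions, entire transformations and generalized collapses. Each move preserves the resolved action, and the strategy is to iterate until the complex reaches a ``normal form'' which splits as a disjoint union of four types of components: simplicial pieces, surface (Seifert) pieces carrying arational measured foliations on $2$-orbifolds, axial (toral/thin-of-axial-type) pieces, and Levitt/thin exotic pieces. Ascending chain condition (1) is used exactly to guarantee termination of these moves on the strata of arc stabilizers: without it, one could keep producing ever-smaller arcs with strictly increasing stabilizers and the process would never stabilize. This is likely the most technical point of the proof, because each Rips move must be shown to either strictly decrease some well-chosen complexity or to reveal one of the components of the normal form.

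The remaining task is to interpret the four types of components in the conclusion. Simplicial, Seifert and axial components correspond directly to the three vertex actions in the statement (allowing a finite kernel $N_v$ for the Seifert pieces accounts for the fact that we work with conical $2$-orbifolds and finite-by-orbifold stabilizers in our setting). For a Levitt/thin component, I would argue that it produces a non-trivial splitting of $(G,H)$ over a stabilizer which is either the stabilizer of an unstable arc or of an infinite tripod: following Levitt's analysis of thin trees, one finds arbitrarily small arcs whose stabilizers strictly increase, and hypothesis~(2)(a)--(b) (finite generation and malnormality of unstable arc stabilizers) is precisely what is needed to convert such a configuration into an honest splitting via a limit argument rather than a proper limit group. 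Infinite tripod stabilizers arise exactly when three generic directions at a branch point are fixed by a common infinite subgroup, which again produces a splitting rather than one of the three tractable components.

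The main obstacle I anticipate is the termination/finiteness of the Rips process under the relative and torsion-tolerant hypotheses. In Bestvina--Feighn and Sela's original settings one assumes a superstable action and finiteness of arc stabilizers, and the Rips moves then terminate in finitely many steps. Here neither holds: arc stabilizers may be infinite virtually cyclic groups with infinite ascending chains allowed a priori, and we must carefully use condition~(1) to control exactly the chains that could cause non-termination. Extracting a splitting from a Levitt component under the weaker hypothesis~(2), rather than from full superstability, will require replacing Bestvina--Feighn's dichotomy with a more delicate argument that pushes the malnormality condition through a limiting procedure. Once this is done, the enumeration of components in the normal form gives the stated dichotomy.
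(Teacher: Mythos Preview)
The paper does not prove this theorem; it is quoted verbatim as Theorem~5.1 of \cite{Gui08} and used as a black box in the shortening argument. So there is no proof in the paper to compare your proposal against.

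That said, your outline is a fair high-level description of the strategy Guirardel actually carries out in \cite{Gui08}: encode the action by a system of isometries on a finite subtree, run a Rips-type process, and analyse the resulting components, with the ascending chain condition replacing superstability to control termination and with hypothesis~(2) used to extract a splitting from the Levitt components. Be aware, though, that what you have written is a plan rather than a proof: the termination argument under condition~(1) and the extraction of a splitting from a thin component under~(2) are each substantial pieces of work (occupying most of \cite{Gui08}), and your sketch does not supply the mechanisms that make them go through. For the purposes of this paper you should simply cite the result, as the author does.
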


We will need the two following results.

\begin{te}\label{shorten1}Let $G$ be a finitely generated group and let $H$ be a subgroup of $G$. Suppose that $(G,H)$ acts on a real tree $(T,d)$ that decomposes as a graph of actions $\mathcal{G}$. Let us denote by $\Delta_{\mathcal{G}}$ the corresponding splitting of $G$. Fix a point $x\in T$. Let $S$ be a finite generating set for $G$. There exists an element $\sigma\in \mathrm{Aut}_H(G)$ such that for every $s\in S$, the following holds:
\begin{itemize}
\item[$\bullet$]if the geodesic segment $[x,s\cdot x]$ intersects non-trivially a surface or an axial component, then $d(x,\sigma(s)\cdot x)<d(x,s\cdot x)$;
\item[$\bullet$]if not, then $\sigma(s)=s$.
\end{itemize}
\end{te}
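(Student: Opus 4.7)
The plan is to build $\sigma$ as a product of modular automorphisms supported on the non-simplicial vertex actions of $\mathcal{G}$ --- one per surface-type vertex and one per axial-type vertex --- each chosen so that it individually shortens the portion of every $[x, s\cdot x]$ lying in its support and acts trivially elsewhere.

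I would start with the surface components. Let $v$ be a surface-type vertex of $\mathcal{G}$, with $G_v/N_v \curvearrowright Y_v$ dual to an arational measured foliation on a conical $2$-orbifold $\mathcal{O}_v$. Incident edge groups at $v$ in $\Delta_{\mathcal{G}}$ lie in extended boundary subgroups of $G_v$, so a mapping class of $\mathcal{O}_v$ fixing the boundary lifts to an automorphism of $G_v$ and extends to an element of $\mathrm{Aut}_H(G)$ by the identity on every other vertex group and by an inner automorphism on the incident edge groups. The classical Rips--Sela/Bestvina--Feighn shortening argument for arational foliations then shows that a sufficiently high power of a well-chosen mapping class element strictly shortens every arc lying inside the surface component, uniformly over the finite set $S$. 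For each axial vertex $v$, $G_v$ maps to $\mathrm{Isom}(Y_v)$ with image a finitely generated group acting with dense orbits on a line; one uses rescalings of the translation action close to the identity, which extend to elements of $\mathrm{Aut}_H(G)$ (trivial outside $G_v$ and trivial on the boundary subgroups thanks to the density of orbits) and strictly shorten any arc interior to the axial component, again uniformly in $s \in S$. Composing all these over the various non-simplicial vertices yields $\sigma \in \mathrm{Aut}_H(G)$; the composition is well defined because the supports are pairwise distinct vertex groups of $\Delta_{\mathcal{G}}$, so the factors commute up to inner automorphisms.

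It remains to verify both bullets. If $[x, s\cdot x]$ avoids every surface and axial component, then the segment is a concatenation of arcs lying in simplicial components and in the vertex sets of $\mathcal{G}$, so $s$ can be written as a product of elements lying in vertex groups of $\Delta_{\mathcal{G}}$ on which each factor of $\sigma$ acts as the identity; hence $\sigma(s) = s$. Otherwise, $d(x, s\cdot x)$ decomposes as the sum of the arc-lengths of $[x, s\cdot x]$ in each traversed component of $\mathcal{G}$, and since the simplicial pieces are preserved by $\sigma$ while each traversed surface or axial piece is strictly shortened, the total length strictly decreases.

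The main obstacle is making the construction uniform in $s$: the powers of mapping class elements and the axial rescaling parameters must be chosen so that shortening occurs for every $s \in S$ and every non-simplicial component intersected by some $[x, s \cdot x]$ simultaneously, while a single arc $[x, s \cdot x]$ may enter and leave a given non-simplicial component several times. Both issues are handled by the finiteness of $S$ together with the fact that each chosen modular automorphism fixes the simplicial skeleton of $\mathcal{G}$ and acts by isometries on each non-simplicial component, so contributions to the shortening across different excursions into the same component add rather than cancel.
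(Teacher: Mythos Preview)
The paper does not give its own proof of this theorem; immediately after the statement it refers to \cite{Per08} (Theorems~5.12 and~5.17) for the torsion-free case and \cite{RW14} (Theorems~4.8 and~4.15) for the general case. Your sketch follows essentially the same strategy as those references: build $\sigma$ as a composition of modular automorphisms, one per surface or axial vertex of $\Delta_{\mathcal{G}}$, each extending by the identity outside its vertex group and strictly shortening arcs that cross its component.

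One imprecision is worth flagging. Your description of the axial step as ``rescalings of the translation action close to the identity, \ldots\ trivial on the boundary subgroups thanks to the density of orbits'' does not match the actual construction. In the axial case the image of $G_v$ in $\mathrm{Isom}(Y_v)$ is a finitely generated group with dense orbits on a line, so (on the orientation-preserving part) one gets a free abelian group of rank $\geq 2$ densely embedded in $\mathbb{R}$; the shortening automorphisms come from the integral automorphism group of this lattice and are neither literal rescalings nor close to the identity, and their compatibility with incident edge groups is not a consequence of orbit density but of the explicit form of these automorphisms. This is the content of \cite{RW14}, Theorem~4.15. The surface paragraph and the overall architecture are fine as a sketch, but as written the axial paragraph would not stand on its own.
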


For a proof of the result above, we refer the reader to \cite{Per08} for the torsion-free case (Theorems 5.12 and 5.17) and \cite{RW14} for the general case (Theorems 4.8 and 4.15).

\begin{rque}\label{reqimportante}In \cite{Per08} and \cite{RW14}, Theorem \ref{shorten1} is stated for $\mathrm{Mod}(\Delta_{\mathcal{G}})$ instead of $\mathrm{Aut}_H(G)$, but one easily sees that $\mathrm{Mod}(\Delta_{\mathcal{G}})$ is a subgroup of $\mathrm{Aut}_H(G)$, because $H$ fixes a point in $T$.\end{rque}

\begin{te}\label{shorten2}Let $\omega$ be a non-principal ultrafilter. Let $G$ be a finitely generated group and let $S$ be a finite generating set for $G$. Let $\Gamma$ be a hyperbolic group, and denote by $(X,d)$ the Cayley graph of $\Gamma$ (for a given generating set). Let $(\phi_n : G \rightarrow\Gamma)_n$ be a sequence of homomorphisms, and let $\lambda_n=\max_{s\in S}d(1,\phi_n(s))$. Assume that the sequence of pointed metric spaces $(X,d_n=d/\lambda_n,1)_n$ $\omega$-converges to a pointed real tree $(T,d_{\omega},x)$, and that this tree decomposes as a graph of actions $\mathcal{G}$. Let $\Delta_{\mathcal{G}}$ be the splitting of $G$ associated with $\mathcal{G}$, and let $\Lambda_{\mathcal{G}}$ be the splitting of $G$ obtained from $\Delta_{\mathcal{G}}$ by replacing each vertex $v$ of simplicial type by the corresponding splitting of $G_v$. The edges coming from this refinement are called simplicial. For every simplicial edge $e$, there exists a Dehn twist $\sigma$ around $e$, together with a sequence of integers $(m_n)_n$, such that for every $s\in S$
\begin{itemize}
\item[$\bullet$]if the geodesic segment $[x,s\cdot x]$ contains $e$, then $d_n(1,\phi_n\circ\sigma^{m_n}(s))<d_n(1,\phi_n(s))$ for every $n$ large enough, and
\item[$\bullet$]if the geodesic segment $[x,s\cdot x]$ does not contain $e$, then $d_n(1,\phi_n\circ\sigma^{m_n}(s))=d_n(1,\phi_n(s))$ for every $n$.
\end{itemize}
\end{te}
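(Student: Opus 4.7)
The plan is to build the Dehn twist from a single "twist element" $c$ that centralizes $G_e$ and whose action in the limit tree translates along a line containing $e$, and then to choose $m_n$ so that the insertion of $\phi_n(c)^{m_n}$ cancels precisely the portion of $\phi_n(s)$ that traverses the approximant of $e$ in $X$.

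First I would isolate the twist element. The simplicial edge $e$ sits inside the simplicial subtree $Y_v$ corresponding to some simplicial vertex $v$ of $\Delta_{\mathcal{G}}$, and its stabilizer $G_e$ fixes $e$ pointwise in $T$. Using the fact that $T$ is an $\omega$-limit of the rescaled Cayley graphs, one extracts an element $c\in G$ commuting with $G_e$ whose image $\phi_n(c)$ acts near-hyperbolically in $(X,d_n)$ with translation length bounded away from $0$ and $\infty$, and whose axis in the limit is the line of $T_\Lambda$ carrying $e$. (Such a $c$ exists because, in a simplicial component of a graph of actions, the line through an edge is the axis of a hyperbolic element of $G$.) Define $\sigma=\sigma_{e,c}$, the Dehn twist that is the identity on the vertex group $G_{v_1}$ on one side of $e$ and is $\mathrm{ad}(c)$ on the vertex group $G_{v_2}$ on the other side.

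Next I would handle the first bullet. Fix a basepoint $v_0$ in the Bass-Serre tree $T_\Lambda$ whose $G$-orbit $\omega$-tracks $x$. If $[x,sx]$ contains $e$, the geodesic $[v_0,sv_0]$ crosses (a translate of) $e$, so in Bass-Serre normal form one writes $s=s_1 s_2$ with $s_1\in G_{v_1}$ reaching the endpoint of $e$ and $s_2$ crossing into $G_{v_2}$; then $\sigma^{m}(s)=s_1 c^{m} s_2 c^{-m}$ (modulo a fixed conjugation), and hence
\[
\phi_n(\sigma^{m}(s))=\phi_n(s_1)\,\phi_n(c)^{m}\,\phi_n(s_2)\,\phi_n(c)^{-m}.
\]
By $\omega$-convergence, the image of the midpoint of $e$ under $\phi_n(s_1)$ is very close, after dividing by $\lambda_n$, to the midpoint of $e$ in $T$, and the axis of $\phi_n(c)$ almost coincides with the line there. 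Choose $m_n$ to be the nearest integer to $L_n/\tau_n$, where $L_n$ is the $d$-length of the segment of $[1,\phi_n(s)]$ lying beyond the crossing point (in the direction of $c$'s axis) and $\tau_n$ is the translation length of $\phi_n(c)$. Hyperbolicity of $X$ (thin quadrilaterals) then forces the new path to be shorter than the old one by a fixed positive fraction of $\lambda_n$, uniformly in $s$ because $S$ is finite. In the quotient metric $d_n=d/\lambda_n$ this gives the strict inequality for $n$ large.

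The second bullet is formal: if $[x,sx]$ avoids $e$, then $[v_0,sv_0]$ does not cross $e$ in $T_\Lambda$, so $s$ lies in the vertex group $G_{v_1}$ on the identity side of $e$, hence $\sigma^{m_n}(s)=s$ and $d_n(1,\phi_n(\sigma^{m_n}(s)))=d_n(1,\phi_n(s))$ for every $n$.

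The main obstacle is the shortening calculation itself. The insertion $\phi_n(c)^{m_n}$ does not cancel algebraically with $\phi_n(s)$; one must show that it cancels \emph{geometrically} in $X$ by the back-and-forth produced along the axis of $\phi_n(c)$. Establishing this requires comparing $[1,\phi_n(s)]$ with its $\sigma^{m_n}$-modified counterpart via a limiting argument in the $\omega$-limit tree, where the modification corresponds to folding the path across the endpoints of $e$. Uniformity in $s\in S$ and choosing a single integer $m_n$ working for all generators simultaneously is handled because $S$ is finite and the key quantities ($L_n$, $\tau_n$) are $\omega$-comparable across $s$.
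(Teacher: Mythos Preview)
The paper does not give its own proof of this statement: immediately after it, the author writes ``For a proof of the previous result, we refer the reader to \cite{Per08} Theorem~5.22 for the torsion-free case, and \cite{RW14} Corollary~4.20 for the general case.'' So there is no in-paper argument to compare against; the theorem is imported wholesale from those sources.

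That said, your sketch misidentifies the twist element, and this is a genuine gap. You produce $c\in G$ as a \emph{hyperbolic} element of $G$ whose axis in the limit tree contains $e$, with $\phi_n(c)$ having translation length bounded away from $0$ in $(X,d_n)$. A Dehn twist around $e$, however, is built from an element $c$ centralizing $G_e$ and lying in (one of) the adjacent vertex groups --- such a $c$ is \emph{elliptic} in $T_\Lambda$ and fixes $e$ in the limit tree $T$. The shortening mechanism in \cite{Per08} and \cite{RW14} is exactly the opposite of what you describe: because $c$ fixes $e$ in $T$, the rescaled translation length of $\phi_n(c)$ tends to $0$, while its unrescaled translation length in $(X,d)$ stays positive (this uses that arc stabilizers in $T$ are virtually cyclic with infinite center under the hypotheses in force when the theorem is applied). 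One then takes $m_n\to\infty$ so that $m_n$ times this small translation length matches the length of the approximant of $e$; the near-axis of $\phi_n(c)$ runs along that approximant, and the inserted power slides the crossing back. With your hyperbolic $c$ the powers $m_n$ would stay bounded and no uniform shortening is available.

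Your treatment of the second bullet is also too quick. From ``$[x,sx]$ does not contain $e$'' you conclude ``$s$ lies in the vertex group $G_{v_1}$''; what actually follows (in the separating case) is that $s$ lies in the fundamental group of the component of $\Lambda_{\mathcal G}\setminus\{e\}$ containing the base vertex, on which $\sigma$ restricts to the identity. You also do not address the HNN case, where the bookkeeping for $\sigma^{m}(s)$ is different.
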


For a proof of the previous result, we refer the reader to \cite{Per08} Theorem 5.22 for the torsion-free case, and \cite{RW14} Corollary 4.20 for the general case.

\begin{rque}\label{reqimportante2}If a subgroup $H$ of $G$ fixes a point in $T$, then all Dehn twists around simplicial edges belong to $\mathrm{Aut}_H(G)$.\end{rque}

\begin{prop}\label{propo}Let $G$ and $\Gamma$ be hyperbolic groups. Let $H$ be a finitely generated subgroup of $G$. Suppose that $G$ is one-ended relative to $H$. Let $\psi : H \hookrightarrow \Gamma$ be a monomorphism. Let $(\phi_n:G\rightarrow\Gamma )\in E(\psi)^{\mathbb{N}}$ be a stable sequence of distinct short homomorphisms. Then the stable kernel of the sequence is non-trivial.\end{prop}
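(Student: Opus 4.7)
The plan is to argue by contradiction using the Bestvina--Paulin construction followed by the shortening argument of Rips--Sela, in the version generalized to the torsion setting by Guirardel and Reinfeldt--Weidmann. Fix a finite generating set $S$ of $G$ and write $\lambda_n=\ell(\phi_n)$.

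\emph{Step 1: extracting a limit tree.} First I would check that $\lambda_n\to\infty$. If the $\lambda_n$ stayed bounded, then for each $s\in S$ the elements $\phi_n(s)$ would lie in a fixed ball of the Cayley graph $X$ of $\Gamma$, so only finitely many $\phi_n$ would be possible, contradicting that they are pairwise distinct. Then I rescale the metric, setting $d_n=d/\lambda_n$, fix a non-principal ultrafilter $\omega$, and take the $\omega$-limit of $(X,d_n,1)$ to obtain a pointed real tree $(T,d_\omega,x)$ equipped with an isometric $G$-action of unit covolume on $S$. Replacing $T$ by its unique minimal $G$-invariant subtree makes the action minimal and non-trivial. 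Since $\phi_n\in E(\psi)$, we have $\phi_n|_H=\mathrm{ad}(\gamma_n)\circ\psi$ for some $\gamma_n\in\Gamma$, and $\psi(H)$ is a fixed finitely generated subgroup of $\Gamma$; hence $d(\gamma_n,\phi_n(h))$ is bounded uniformly in $n$ for each $h\in H$, so after rescaling $H$ fixes the $\omega$-limit of the points $\gamma_n$. This makes $(G,H)$ act on $T$.

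\emph{Step 2: applying Guirardel's theorem.} Assume the stable kernel is trivial. Then classical properties of Bestvina--Paulin limits over hyperbolic groups (see \cite{RW14} for the torsion case) imply that the $G$-action on $T$ satisfies the ascending chain condition on arc stabilizers, that tripod stabilizers are finite, and that unstable-arc stabilizers are finite and in particular finitely generated and self-normalizing in the sense required by Theorem \ref{guirardel}(2). Applying that theorem to the $(G,H)$-action on $T$, two alternatives arise. If $(G,H)$ splits over the stabilizer of an unstable arc or an infinite tripod, this stabilizer is finite, so $G$ splits non-trivially over a finite group relative to $H$, contradicting the hypothesis that $G$ is one-ended relative to $H$. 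Hence $T$ decomposes as a graph of actions $\mathcal{G}$ with simplicial, Seifert-type, and axial vertex actions.

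\emph{Step 3: shortening.} Let $\Delta_\mathcal{G}$ be the associated splitting of $G$. For each $s\in S$ whose geodesic $[x,s\cdot x]$ meets a Seifert-type or axial component of $\mathcal{G}$, Theorem \ref{shorten1} produces a single $\sigma\in\mathrm{Aut}_H(G)$ that strictly shortens the translation length of every such $s$ while fixing the others. For the remaining $s\in S$ whose geodesic lies entirely in the simplicial part, Theorem \ref{shorten2} produces, for each simplicial edge $e$ traversed, a Dehn twist around $e$ lying in $\mathrm{Aut}_H(G)$ (by Remarks \ref{reqimportante} and \ref{reqimportante2}) together with a sequence of exponents $m_n$ such that, at the finite level, the compositions strictly decrease $d_n(1,\phi_n(s))$ for all $n$ sufficiently large, while preserving the values on generators whose segment avoids $e$. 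Composing these modular automorphisms over all $s\in S$ realizing $\lambda_n$ yields, for every $n$ large enough, an element $\sigma_n\in\mathrm{Aut}_H(G)$ and an element $\gamma_n\in\Gamma$ with $\ell(\mathrm{ad}(\gamma_n)\circ\phi_n\circ\sigma_n)<\ell(\phi_n)$, contradicting the fact that $\phi_n$ is short in $E(\psi)$. Therefore the stable kernel must be non-trivial.

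The main obstacle, and the step I expect to be most delicate, is the bookkeeping at the end of Step 3: Theorems \ref{shorten1} and \ref{shorten2} each shorten only a prescribed subset of the generators, and one must show that composing the various modular automorphisms yields a single $\sigma_n$ that simultaneously shortens all generators of maximal length without lengthening any other generator above $\lambda_n$. The torsion version of this combination is precisely what is done in the proofs of Theorems~4.8/4.15 and Corollary~4.20 of \cite{RW14}, and I would invoke those verbatim rather than re-prove them.
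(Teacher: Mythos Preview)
Your proof follows the same route as the paper's: Bestvina--Paulin limit, Guirardel's decomposition (Theorem~\ref{guirardel}), then the Reinfeldt--Weidmann shortening, and your Step~3 bookkeeping concern is handled exactly as you say, by deferring to \cite{RW14}.

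Two points in Step~1 need more care, and the paper addresses them explicitly. First, passing to a minimal invariant subtree and declaring the action non-trivial presupposes that $G$ has no global fixed point in $T$; the paper proves this using shortness: since each $\phi_n$ is short, the basepoint $1$ minimizes $z\mapsto\max_{s\in S}d(z,\phi_n(s)\cdot z)$ over $\Gamma$, so for any candidate fixed point $y=(y_n)$ one finds $s\in S$ with $d_\omega(y,s\cdot y)\geq d_\omega(x,s\cdot x)=1$. Second, your argument that $H$ fixes the $\omega$-limit of the conjugators $\gamma_n$ tacitly assumes $d(1,\gamma_n)/\lambda_n$ is bounded, which is not given; the paper instead notes that each $\phi_n(h_i)$ is conjugate to the fixed element $\psi(h_i)$ and hence has constant translation length, so after rescaling every generator $h_i$ of $H$ (and every product $h_ih_j$) is elliptic in $T$, and Serre's lemma produces the common fixed point.
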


\begin{proof}
Assume towards a contradiction that the stable kernel is trivial. We shall find a contradiction as follows: first, we prove that the group $G$ acts fixed-point-freely on a real tree $T$. Secondly, Theorem \ref{guirardel} provides us with a decomposition of $T$ into a graph of actions. Last, we can prove that the homomorphisms $\phi_n$ can be shortened, using \cite{RW14}. This is a contradiction since the $\phi_n$ are supposed to be short.

Let $S$ be a finite generating set of $G$. Let $(X,d)$ be the Cayley graph of $\Gamma$ (for a given finite generating set of $\Gamma$). Since the homomorphisms $\phi_n$ are short, the point $1\in X$ minimizes the function $x\in X \mapsto\max_{s\in S}d(x,\phi_n(s)\cdot x)\in\mathbb{N}$, for each $n$. Let $\lambda_n=\max_{s\in S}d(1,\phi_n(s)\cdot 1)$. 

Let us observe that $\lambda_n$ goes to infinity as $n$ goes to infinity. Indeed, the homomorphisms $\phi_n$ are pairwise distinct, and for every integer $R$ the set \[\lbrace \phi\in\mathrm{Hom}(G,\Gamma) \ \vert \ \phi(S)\subset B_{\Gamma}(1,R)\rbrace\] is finite, since $\phi$ is determined by the finite set $\phi(S)$, and $\#B_{\Gamma}(1,R)<+\infty$.

Let $\omega$ be a non-principal ultrafilter. The sequence of metric spaces $((X,d_n:=d/\lambda_n))_{n\in\mathbb{N}}$ $\omega$-converges to a real tree $(T=X_{\omega},d_{\omega})$. Let $x=(1)_n\in T=X_{\omega}$. We claim that $G$ acts fixed-point-freely on $T$. Towards a contradiction, suppose that $G$ fixes a point $y=(y_n)_n\in T$. Since the generating set $S$ of $G$ is finite, there exists an element $s\in S$ such that $\omega(\lbrace n \ \vert \ d(1, \phi_n(s)\cdot 1)\leq d(y_n, \phi_n(s)\cdot y_n)\rbrace)=1$. Therefore, $d_{\omega}(y,s\cdot y)\geq d_{\omega}(x,s\cdot x)=1$. This is a contradiction since $y$ is supposed to be fixed by $G$.

The group $H$ fixes a point of $T$. Indeed, let $\lbrace h_1,\ldots ,h_p\rbrace$ be a finite generated set for $H$. Since the sequence $(\phi_n(h_i))_n$ is constant up to conjugation, the translation length $\ell_n(\phi_n(h_i))$ of $\phi_n(h_i)$ acting on $(X,d_n)$ goes to $0$ as $n$ goes to infinity. Hence, $h_i$ acts on the limit tree $T$ with translation length equal to 0. If follows that $h_i$ fixes a point of $T$, for every $1\leq i\leq p$. Similarly, $h_ih_j$ fixes a point of $T$ for every $1\leq i,j\leq p$. If follows from Serre's lemma that $H$ fixes a point of $T$.

The group $G$ acts hyperbolically on $T$, so there exists a (unique) minimal $G$-invariant subtree of $T$, namely the union of axes of hyperbolic elements of $G$. Hence, one can assume that the action of $G$ on $T$ is minimal and non-trivial. By Theorem 1.16 of \cite{RW14}, the action of $G$ on the limit tree $T$ has the following properties:
\begin{enumerate}
\item the stabilizer of any non-degenerate tripod is finite;
\item the stabilizer of any non-degenerate arc is virtually cyclic with infinite center;
\item the stabilizer of any unstable arc is finite.
\end{enumerate}
In particular, the tree $T$ satisfies the ascending chain condition of Theorem \ref{guirardel} since the stabilizer of any arc is virtually cyclic, and any ascending sequence of virtually cyclic subgroups of a hyperbolic group stabilizes.

Then, it follows from Theorem \ref{guirardel} that either $(G,H)$ splits over the stabilizer of an unstable arc, or over the stabilizer of an infinite tripod, or $T$ has a decomposition into a graph of actions. Since $G$ is one-ended relative to $H$, and since the stabilizer of an unstable arc or of an infinite tripod is finite, it follows that $T$ has a decomposition into a graph of actions.

Now, it follows from Theorems \ref{shorten1} and \ref{shorten2} (see also Remarks \ref{reqimportante} and \ref{reqimportante2}) that there exists a sequence of automorphisms $(\sigma_n)_n\in\mathrm{Aut}_H(G)^{\mathbb{N}}$ such that $\phi_n\circ \sigma_n$ is shorter than $\phi_n$ for $n$ large enough. This is a contradiction since the $\phi_n$ are assumed to be short.\end{proof}

The following theorem is a (relative) finiteness result for monomorphisms between two hyperbolic groups.

\begin{te}\label{sa}Let $\Gamma$ and $G$ be hyperbolic groups, and let $H$ be a finitely generated subgroup of $G$. Suppose that $G$ is one-ended relative to $H$. Assume in addition that $G$ embeds into $\Gamma$, and let $\psi : G \hookrightarrow \Gamma$ be a monomorphism. Then there exists a finite set $\lbrace i_1,\ldots ,i_{\ell}\rbrace\subset \mathrm{Hom}(G,\Gamma)$ of monomorphisms that coincide with $\psi$ on $H$ such that, for every monomorphism $\phi : G\hookrightarrow\Gamma$ that coincides with $\psi$ on $H$, there exist an automorphism $\sigma\in\mathrm{Aut}(G)$ such that $\sigma_{\vert H}=\mathrm{id}_H$, an integer $1\leq \ell \leq k$ and an element $\gamma\in \Gamma$ such that \[\phi=\mathrm{ad}(\gamma)\circ i_{\ell} \circ \sigma. \]\end{te}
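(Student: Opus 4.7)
The plan is to derive the finiteness of orbits by contradiction using the shortening argument (Proposition \ref{propo}). Introduce the subgroup $\mathrm{Aut}_H^0(G) = \{\sigma \in \mathrm{Aut}(G) : \sigma_{\vert H} = \mathrm{id}_H\}$ of $\mathrm{Aut}_H(G)$, and let $\mathcal{M}$ denote the set of monomorphisms $G \hookrightarrow \Gamma$ that coincide with $\psi$ on $H$. A preliminary observation is that $\Gamma \times \mathrm{Aut}_H^0(G)$ and $\Gamma \times \mathrm{Aut}_H(G)$ have the same orbits on $\mathcal{M}$ under the action $(\gamma, \sigma) \cdot \phi = \mathrm{ad}(\gamma) \circ \phi \circ \sigma$: indeed, if $\tau \in \mathrm{Aut}_H(G)$ satisfies $\tau_{\vert H} = \mathrm{ad}(g)_{\vert H}$, then $\sigma := \mathrm{ad}(g^{-1}) \circ \tau$ lies in $\mathrm{Aut}_H^0(G)$ and $\mathrm{ad}(\gamma) \circ \phi \circ \tau = \mathrm{ad}(\gamma\, \phi(g)) \circ \phi \circ \sigma$ for $\phi \in \mathcal{M}$. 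It therefore suffices to show that $\mathcal{M}$ has finitely many $\Gamma \times \mathrm{Aut}_H(G)$-orbits.

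Assume for contradiction that there are infinitely many such orbits. In each orbit pick a short representative, i.e.\ one minimizing $\ell(\phi) = \max_{s \in S} d(1, \phi(s))$; this minimum is attained since $\ell$ is $\mathbb{N}$-valued, and the representative remains a monomorphism (pre- and post-composition are by automorphisms). This produces a sequence $(\phi_n)_n$ of pairwise distinct short monomorphisms lying in $E(\psi_{\vert H})$. Using countability of $G$, a standard diagonal extraction yields a stable subsequence. Proposition \ref{propo} applied to this subsequence forces its stable kernel to be non-trivial, contradicting the assumption that each $\phi_n$ is a monomorphism (and therefore has trivial kernel).

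Hence $\mathcal{M}$ has only finitely many orbits; let $i_1, \ldots, i_k$ be representatives. After post-composing each $i_\ell$ with a suitable inner automorphism of $\Gamma$ (the restriction of any representative to $H$ differs from $\psi_{\vert H}$ only by an inner automorphism of $\Gamma$), one may arrange $(i_\ell)_{\vert H} = \psi_{\vert H}$ exactly. For any monomorphism $\phi$ coinciding with $\psi$ on $H$, write $\phi = \mathrm{ad}(\gamma) \circ i_\ell \circ \tau$ with $\tau \in \mathrm{Aut}_H(G)$, then apply the preliminary observation to replace $\tau$ by an element $\sigma \in \mathrm{Aut}_H^0(G)$, absorbing the discrepancy into $\gamma$. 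This yields the desired statement.

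The heavy lifting is done by Proposition \ref{propo}; the remaining reduction is essentially a compactness-type argument, and the only subtlety I anticipate is the bookkeeping between the "identity on $H$" and "conjugation on $H$" versions of the automorphism group, which is handled cleanly by the preliminary observation.
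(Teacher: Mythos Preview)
Your proof is correct and follows essentially the same approach as the paper's: both argue by contradiction, reduce the $\mathrm{Aut}_H^0(G)$-orbit question to the $\mathrm{Aut}_H(G)$-orbit question via the same inner-automorphism trick, pick short representatives in distinct orbits, and invoke Proposition~\ref{propo} to obtain a non-trivial stable kernel contradicting injectivity. Your version is slightly more explicit about extracting a stable subsequence and about the final bookkeeping to normalize the $i_\ell$, but the substance is identical.
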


\begin{rque}Note that the element $\gamma$ in the previous theorem belongs to $C_{\Gamma}(\psi(H))$. Indeed, for every $h\in H$, $\phi(h)=\psi(h)=\mathrm{ad}(\gamma)(i_{\ell}(\sigma(h)))=\mathrm{ad}(\gamma)(i_{\ell}(h))=\mathrm{ad}(\gamma)(\psi(h))$.
\end{rque}

\begin{proof}
Assume towards a contradiction that there exists a sequence of monomorphisms $(\phi_n : G \hookrightarrow \Gamma)$ such that for every $n$, $\phi_n$ coincides with $\psi$ on $H$, and for every $n\neq m$, for every $\gamma \in \Gamma$, for every $\sigma\in \mathrm{Aut}(G)$ whose restriction to $H$ is the identity, $\phi_n\neq \mathrm{ad}(\gamma)\circ \phi_m \circ \sigma$. Then, for every $n\neq m$, for every $\gamma \in \Gamma$, for every $\sigma\in \mathrm{Aut}_H(G)$, $\phi_n\neq \mathrm{ad}(\gamma)\circ \phi_m \circ \sigma$. Indeed, if $\sigma$ belongs to $\mathrm{Aut}_H(G)$, there exists an element $g\in G$ and an automorphism $\alpha$ of $G$ whose restriction to $H$ is the identity and such that $\sigma=\mathrm{ad}(g)\circ \alpha$, so $\mathrm{ad}(\gamma)\circ \phi_m\circ \sigma = \mathrm{ad}(\gamma\phi_m(g))\circ \phi_m\circ \alpha$. Hence, one can assume that the $\phi_n$ are short, pairwise distinct and injective. But the stable kernel of the sequence $(\phi_n)_n$ is trivial since each $\phi_n$ is injective. This contradicts Proposition \ref{propo}.\end{proof}

\begin{co}\label{coro}Let $G$ be a hyperbolic group, let $H$ be a finitely generated subgroup of $G$. Assume that $H$ is infinite, and that $G$ is one-ended relative to $H$. Then every monomorphism of $G$ whose restriction to $H$ is the identity is an automorphism of $G$.\end{co}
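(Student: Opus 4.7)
My plan is to apply Theorem \ref{sa} with $\Gamma=G$ and $\psi$ equal to the identity embedding of $G$ into itself, and then to iterate $\phi$. Theorem \ref{sa} provides finitely many monomorphisms $i_1,\ldots,i_\ell: G\hookrightarrow G$ fixing $H$ pointwise such that every monomorphism of $G$ fixing $H$ pointwise has the form $\mathrm{ad}(\gamma)\circ i_j\circ\sigma$ with $\gamma\in C_G(H)$ (by the remark following Theorem \ref{sa}) and $\sigma\in\mathrm{Aut}(G)$, $\sigma|_H=\mathrm{id}_H$. Each iterate $\phi^n$ is such a monomorphism, so by pigeonhole there exist $n_0<n_1$, $\gamma\in C_G(H)$, and $\sigma\in\mathrm{Aut}(G)$ with $\sigma|_H=\mathrm{id}_H$ satisfying
\[\phi^{n_1}=\mathrm{ad}(\gamma)\circ\phi^{n_0}\circ\sigma.\]

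Setting $K:=\phi^{n_0}(G)$ and $k:=n_1-n_0$, and using that $\sigma$ is a bijection of $G$, taking images gives $\phi^{n_1}(G)=\gamma K\gamma^{-1}$. Since $\phi^{n_1}(G)=\phi^{n_0}(\phi^k(G))\subseteq K$, I obtain the inclusion $\gamma K\gamma^{-1}\subseteq K$. If I can promote this to an equality, then by injectivity of $\phi^{n_0}$ one has $\phi^k(G)=G$; hence $\phi^k$ is onto, and a straightforward induction (using $\phi^k=\phi\circ\phi^{k-1}$) shows that $\phi$ itself is onto, and therefore an automorphism.

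The main obstacle is therefore to upgrade the inclusion $\gamma K\gamma^{-1}\subseteq K$ to equality. The key input is that $H$ is infinite: picking $h\in H$ of infinite order (which exists as $H$ is infinite in the hyperbolic group $G$), one has $C_G(H)\subseteq C_G(h)$, and centralizers of infinite-order elements in hyperbolic groups are virtually cyclic. A short case analysis then yields a positive integer $N$ with $\gamma^N\in Z(H)\subseteq H\subseteq K$: if $C_G(H)$ is finite, take $N=|\gamma|$; if $C_G(H)$ is infinite, then for any infinite-order $z\in C_G(H)$ the subgroup $\langle h,z\rangle$ is abelian of rank at most one (hyperbolic groups contain no $\mathbb{Z}^2$), so $h$ and $z$ share a nontrivial power living in $Z(H)$, which forces $Z(H)$ to be infinite and hence commensurable to—in particular of finite index in—the virtually cyclic group $C_G(H)$, and $N$ may be taken as this index. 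Since $\gamma^N\in K$, conjugation by $\gamma^N$ preserves $K$, so $\gamma^N K\gamma^{-N}=K$. On the other hand, iterating the inclusion $\gamma K\gamma^{-1}\subseteq K$ and conjugating at each step, any strict inclusion would give the strictly decreasing chain $K\supsetneq\gamma K\gamma^{-1}\supsetneq\cdots\supsetneq\gamma^N K\gamma^{-N}$, contradicting $\gamma^N K\gamma^{-N}=K$. Hence $\gamma K\gamma^{-1}=K$, and the proof is complete.
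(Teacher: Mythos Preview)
Your proof is correct and takes a genuinely different route from the paper's argument.

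Both proofs begin the same way: apply Theorem \ref{sa} to the iterates $\phi^n$ and use pigeonhole to obtain a relation $\phi^{n_1}=\mathrm{ad}(\gamma)\circ\phi^{n_0}\circ\sigma$ with $\gamma\in C_G(H)$ and $\sigma\in\mathrm{Aut}(G)$ fixing $H$ pointwise. The divergence is in how one eliminates $\gamma$. The paper splits into two cases. When $H$ is non-elementary, $C_G(H)$ is finite and $\phi$ induces a bijection of $C_G(H)$; writing $\gamma=\phi^{n_0}(k)$ one pulls the inner automorphism through $\phi^{n_0}$ to get $\phi^{n_1-n_0}=\mathrm{ad}(k)\circ\sigma$. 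When $H$ is infinite virtually cyclic, the paper iterates Theorem \ref{sa} along a sequence $(\ell_n)$, arranges via a coset pigeonhole that the accumulated product $g_{p-1}\cdots g_0$ lies in $Z(H)$, and then uses $\phi|_H=\mathrm{id}$ to pull this element through.

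Your argument avoids this case split by working with images rather than trying to pull $\gamma$ through $\phi^{n_0}$. You observe that $K=\phi^{n_0}(G)$ satisfies $\gamma K\gamma^{-1}\subseteq K$, and reduce the problem to showing this is an equality. The input that $H$ is infinite is used uniformly to produce a single integer $N$ with $\gamma^N\in Z(H)\subseteq H\subseteq K$, whence the descending chain $K\supseteq\gamma K\gamma^{-1}\supseteq\cdots\supseteq\gamma^N K\gamma^{-N}=K$ forces all inclusions to be equalities. This is cleaner: it replaces the paper's iterated pigeonhole in the virtually cyclic case by a direct finite-index observation (that $Z(H)$ is normal of finite index in the virtually cyclic group $C_G(H)$, so $\gamma^{[C_G(H):Z(H)]}\in Z(H)$). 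The trade-off is that the paper's approach, when it works, produces an explicit automorphism equal to a power of $\phi$, while yours deduces surjectivity of $\phi$ more indirectly via $\phi^k(G)=G$.
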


\begin{proof}Let $\phi$ be a monomorphism of $G$ whose restriction to $H$ is the identity. 

First, assume that the subgroup $H$ is non-elementary. Then $C_G(H)$ is finite. Since $\phi^n$ is a monomorphism for every integer $n$, it follows from Theorem \ref{sa} that there exist two integers $n>m$ such that $\phi^n=\mathrm{ad}(g)\circ \phi^m\circ \sigma$ for some $g\in C_G(H)$ and $\sigma\in \mathrm{Aut}(G)$ whose restriction to $H$ is the identity. We have $\phi(C_G(H))\subset C_G(H)$, so $\phi$ induces a bijection of $C_G(H)$. As a consequence, there exists an element $k\in C_G(H)$ such that $g=\phi^m(k)$, so $\phi^n=\phi^m\circ\mathrm{ad}(k)\circ \sigma$. Hence $\phi^{n-m}=\mathrm{ad}(k)\circ \sigma$ is an automorphism, so $\phi$ is an automorphism.

Now, assume that $H$ is infinite virtually cyclic. It fixes a pair of points $\lbrace x^{-}, x^{+}\rbrace$ on the boundary of $G$. The stabilizer $S$ of the pair of points $\lbrace x^{-},x^{+}\rbrace$ is virtually cyclic and contains $H$ and $C_G(H)$, which is infinite. Note that $H$ and $C_G(H)$ have finite index in $S$, so $Z(H)=H\cap C_G(H)$ has finite index as well. In particular, this group has finite index in $C_G(H)$. Let \[p=[C_G(H):Z(H)] \ \ \  \text{and} \ \ \ C_G(H)=\bigcup_{1\leq i\leq p}u_iZ(H).\]By Theorem \ref{sa}, there exists an increasing sequence of integers $(\ell_n)$ such that, for every $n$, there exist $
g_n\in C_G(H)$ and an automorphism $\sigma_n$ of $G$ whose restriction to $H$ is the identity, such that $\phi^{\ell_{n+1}}=\mathrm{ad}(g_n)\circ \phi^{\ell_n}\circ\sigma_n$. Up to taking a subsequence, one can assume that there exists an integer $i\in\llbracket 1,p\rrbracket$ such that $\ell_n$ belongs to $u_iZ(H)$ for each $n$. We have \[\phi^{\ell_{p}}=\mathrm{ad}(g_{p-1}\cdots g_0)\circ \phi^{\ell_0}\circ \sigma_0\circ\cdots \circ \sigma_{p-1}, \]with $g_{p-1}\cdots g_0$ belonging to $(u_iZ(H))^p=Z(H)$. Since $\phi$ coincides with the identity on $H$, we have $g_{p-1}\cdots g_0=\phi^{\ell_0}(g_{p-1}\cdots g_0)$, so \[\phi^{\ell_{p}-\ell_0}=\mathrm{ad}(g_{p-1}\cdots g_0)\circ \sigma_0\circ\cdots \circ \sigma_{p-1}.\]Hence $\phi$ is an automorphism.\end{proof}

Zlil Sela proved in \cite{Sel97} that torsion-free one-ended hyperbolic groups are co-Hopfian. Later, Christophe Moioli generalized this result in the presence of torsion in his PhD thesis (see \cite{Moi13}). By combining this result with Corollary \ref{coro}, we get the following theorem (note that a finitely generated group $G$ is one-ended relative to a finite subgroup $H$ if and only if $G$ is one-ended). 

\begin{te}\label{coHopf}Let $G$ be a hyperbolic group, let $H$ be a finitely generated subgroup of $G$. Assume that $G$ is one-ended relative to $H$. Then every monomorphism of $G$ whose restriction to $H$ is the identity is an automorphism of $G$.\end{te}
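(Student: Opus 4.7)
The plan is to split into two cases according to whether $H$ is infinite or finite, reducing each case to a result already available.

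First I would handle the case where $H$ is infinite: this is precisely the content of Corollary \ref{coro}, which was proved just above using Theorem \ref{sa} (the relative finiteness result obtained from the shortening argument). So in this case there is nothing more to do, and every monomorphism of $G$ whose restriction to $H$ is the identity is already known to be an automorphism.

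Next I would handle the case where $H$ is finite. The key observation, which the surrounding text highlights, is that for a finitely generated group $G$ and a finite subgroup $H$, being one-ended relative to $H$ is equivalent to being one-ended (since any Stallings-type splitting of $G$ over a finite subgroup can be arranged so that $H$ lies in a vertex group, so the relative condition adds nothing). Thus in this case $G$ is a one-ended hyperbolic group. Now I would invoke the absolute co-Hopf property for one-ended hyperbolic groups: in the torsion-free case this is Sela's theorem from \cite{Sel97}, and in full generality (with torsion allowed) this is Moioli's theorem from \cite{Moi13}, both cited in the paragraph preceding the statement. Since every monomorphism of $G$ is automatically an automorphism, a fortiori so is any monomorphism fixing $H$.

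Combining the two cases yields the conclusion. The only mild subtlety is the reduction in the second case, but once the equivalence between one-endedness relative to a finite subgroup and one-endedness is invoked, the theorem follows immediately from citing Sela/Moioli, so I do not expect any real obstacle here — the substantive work has already been done in Corollary \ref{coro} via the shortening argument.
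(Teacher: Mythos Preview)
Your proposal is correct and matches the paper's own argument exactly: the paper also splits into the case $H$ infinite (handled by Corollary~\ref{coro}) and the case $H$ finite (reduced, via the observation that one-endedness relative to a finite subgroup is just one-endedness, to Moioli's extension of Sela's co-Hopf theorem). There is nothing to add.
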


We conclude this section by proving the following finiteness result for non-injective homomorphisms between two hyperbolic groups, relative to a subgroup.

\begin{te}\label{short}Let $\Gamma$ and $G$ be hyperbolic groups, and let $H$ be a finitely generated subgroup of $G$ that embeds into $\Gamma$. Let $\psi : H \hookrightarrow \Gamma$ be a monomorphism. Assume that $G$ is one-ended relative to $H$. Then there exists a finite subset $F\subset G\setminus \lbrace 1\rbrace$ such that, for any non-injective homomorphism $\phi : G \rightarrow \Gamma$ that coincides with $\psi$ on $H$ up to conjugation, there exists a modular automorphism $\sigma\in\mathrm{Aut}_H(G)$ such that $\ker(\phi\circ\sigma)\cap F\neq \varnothing$.\end{te}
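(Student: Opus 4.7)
The plan is to argue by contradiction, producing a stable sequence of pairwise distinct short non-injective homomorphisms in $E(\psi)$, and then applying Proposition \ref{propo} to force a non-trivial stable kernel which contradicts the negated hypothesis.

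Suppose no such finite set $F$ exists. Enumerate the countable set $G\setminus\{1\}$ as $\{g_1,g_2,\ldots\}$ and set $F_n=\{g_1,\ldots,g_n\}$. Negating the conclusion gives, for each $n$, a non-injective homomorphism $\phi_n\in E(\psi)$ such that $\ker(\phi_n\circ\sigma)\cap F_n=\varnothing$ for every $\sigma\in\mathrm{Aut}_H(G)$.

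The first key step is to replace each $\phi_n$ by a short representative $\phi_n'=\mathrm{ad}(\gamma_n)\circ\phi_n\circ\tau_n$ in its $\Gamma\times\mathrm{Aut}_H(G)$-orbit. Since $E(\psi)$ is invariant under this action and both $\mathrm{ad}(\gamma_n)$ and $\tau_n$ are bijective, $\phi_n'$ still lies in $E(\psi)$ and is still non-injective. The crucial point is that the negated condition is preserved: for any $\sigma\in\mathrm{Aut}_H(G)$, one has $\ker(\phi_n'\circ\sigma)=\ker(\phi_n\circ(\tau_n\sigma))$ (because $\mathrm{ad}(\gamma_n)$ is injective), and $\tau_n\sigma$ again belongs to $\mathrm{Aut}_H(G)$; hence $\ker(\phi_n'\circ\sigma)\cap F_n=\varnothing$ for every $\sigma\in\mathrm{Aut}_H(G)$.

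Next I would split into two cases. If infinitely many $\phi_n'$ coincide with a single homomorphism $\phi$, then $\phi$ is non-injective and some $g_i\in\ker\phi\setminus\{1\}$; for any $n\geq i$ with $\phi_n'=\phi$, taking $\sigma=\mathrm{id}\in\mathrm{Aut}_H(G)$ gives $g_i\in F_n\cap\ker(\phi_n'\circ\sigma)$, a contradiction. Otherwise, extract an infinite subsequence of pairwise distinct $\phi_n'$, and then, by a diagonal extraction based on a fixed enumeration of $G$, pass to a further subsequence which is stable in the sense of the preamble to Proposition \ref{propo}. Applying that proposition to this stable sequence of pairwise distinct short homomorphisms in $E(\psi)$ yields a non-trivial stable kernel, so some $g_i\neq 1$ lies in it. For $n\geq i$ large enough we have $\phi_n'(g_i)=1$, hence $g_i\in F_n\cap\ker(\phi_n'\circ\mathrm{id})$, again a contradiction.

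The only delicate point will be the verification that the short-representative step preserves the negated condition; this is where it is essential that the hypothesis quantifies over \emph{all} $\sigma\in\mathrm{Aut}_H(G)$ and not merely $\sigma=\mathrm{id}$, so that the extra factor $\tau_n$ can be absorbed. Granting this, everything else is a routine combination of the shortening argument with a diagonal extraction of a stable subsequence from a countable group.
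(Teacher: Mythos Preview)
Your proof is correct and follows essentially the same route as the paper: negate the conclusion to produce a sequence of non-injective homomorphisms in $E(\psi)$ whose $\mathrm{Aut}_H(G)$-twists avoid an exhausting family of finite sets, shorten each one, pass to a subsequence of pairwise distinct short homomorphisms, and invoke Proposition~\ref{propo} to force a non-trivial stable kernel, contradicting the avoidance condition with $\sigma=\mathrm{id}$. You are in fact more explicit than the paper on two points it leaves implicit---the diagonal extraction of a stable subsequence and the verification that the shortening step preserves the negated condition (which, as you note, is exactly where the universal quantification over $\sigma\in\mathrm{Aut}_H(G)$ is used).
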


\begin{rque}Since $\mathrm{Mod}_H(G)$ has finite index in $\mathrm{Aut}_H(G)$ (see Theorem \ref{indice fini}), the result above is still valid if one replaces $\mathrm{Aut}_H(G)$ by $\mathrm{Mod}_H(G)$.\end{rque}

\begin{proof}
Let $B_n$ denote the set of elements of $G$ whose length is less that $n$, for a given finite generating set of $G$. Assume towards a contradiction that there exists a sequence $(\phi_n)$ of homomorphisms from $G$ to $\Gamma$ such that, for every integer $n$, the three following conditions hold:
\begin{enumerate}
\item $\phi_n$ coincides with $\psi$ on $H$ up to conjugation;
\item $\phi_n$ is non-injective;
\item for each $\sigma\in \mathrm{Aut}_H(G)$, $\phi_n\circ \sigma$ is injective in restriction to $B_n$.
\end{enumerate}
Up to precomposing each $\phi_n$ by an element of  $\mathrm{Aut}_H(G)$, and postcomposing by an inner automorphism, one can assume that $\phi_n$ is short. Moreover, up to passing to a subsequence, one can assume that the $\phi_n$ are pairwise distinct. But the stable kernel of the sequence $(\phi_n)_n$ is trivial, contradicting Proposition \ref{propo}.\end{proof}

\section{An example: the group $\mathrm{SL}_2(\mathbb{Z})$}\label{motivation}

In this section, we shall prove that the group $G=\mathrm{SL}_2(\mathbb{Z})$ is $\exists$-homogeneous. Recall that $G$ splits as $G=A\ast_C B$ with $A\simeq \mathbb{Z}/4\mathbb{Z}$, $B\simeq\mathbb{Z}/6\mathbb{Z}$ and $C\simeq\mathbb{Z}/2\mathbb{Z}$. Let $u$ and $v$ be two tuples in $G^k$ (for some $k\geq 1$) with the same (existential) type. We shall prove that there exists an automorphism of $G$ sending $u$ to $v$.

We shall decompose the proof into two steps. First, we shall prove that there exist a monomorphism $\phi$ of $G$ such that $\phi(u)=v$, and a monomorphism $\psi$ of $G$ such that $\phi(v)=u$. Then, we shall establish the existence of an automorphism of $G$ sending $u$ to $v$.

\subsection{First step}We claim that there exists a monomorphism $\phi$ of $G$ such that $\phi(u)=v$. Let $a$ denote a generator of $A$, and let $b$ denote a generator of $B$. A finite presentation of $G$ is given by $\langle a,b \ \vert \ \Sigma(a,b)=1\rangle$, where $\Sigma(a,b):(a^4=1)\wedge (b^6=1)\wedge (a^2=b^3)$. Let $u=(u_1,\ldots ,u_k)$ and $v=(v_1,\ldots ,v_k)$ with $u_i,v_i\in G$ for every $1\leq i\leq k$. Each element $u_i$ can be written as a word $w_i(a,b)$. The group $G$ satisfies the following sentence $\theta(u)$: \[\theta(u):\exists x\exists y \ \Sigma(x,y)=1  \ \bigwedge_{1\leq i\leq k}u_i=w_i(x,y)  \bigwedge_{1\leq \ell\leq 3} x^{\ell}\neq 1 \ \bigwedge_{1\leq \ell\leq 5} \ y^{\ell}\neq 1 .\]

Indeed, taking $x=a$ and $y=b$, the statement is obvious. Then, since $u$ and $v$ have the same existential type, the sentence $\theta(v)$ is satisfied by $G$ as well. This sentence asserts the existence of a tuple $(x,y)\in G^2$ which is solution to the system of equations and inequations \[\Sigma(x,y)=1  \ \bigwedge_{1\leq i\leq k}v_i=w_i(x,y)  \bigwedge_{1\leq \ell\leq 3} x^{\ell}\neq 1 \ \bigwedge_{1\leq \ell\leq 5} \ y^{\ell}\neq 1.\]In particular, $x$ has order 4 and $y$ has order 6. Since $\Sigma(x,y)=1$, the function $\phi : G \rightarrow G$ defined by $\phi(a)=x$ and $\phi(b)=y$ is an endomorphism of $G$. For every $1\leq i\leq k$, one has $v_i=w_i(x,y)=w_i(\phi(a),\phi(b))=\phi(w_i(a,b))=\phi(u_i)$. Hence, $\phi$ sends $u$ to $v$. It remains to prove that $\phi$ is injective.

First of all, note that $\phi$ is injective in restriction to $A$ and $B$. As a consequence, $\phi$ sends $A$ isomorphically to a conjugate $A^g$ of $A$, and $\phi$ sends $B$ isomorphically to a conjugate $B^h$ of $B$. Let $T$ be the Bass-Serre tree of the splitting $G=A\ast_C B$, let $v_A$ be a vertex fixed by $A$ and $v_B$ a vertex fixed by $B$. The group $C$ fixes the edge $[v_A,v_B]$, so $\phi(C)$ fixes the path $[g\cdot v_A,h\cdot w_B]$. But $C$ is normal in $T$, so it is the stabilizer of any edge of $T$; in particular, $C$ is the stabilizer of the path $[g\cdot v_A,h\cdot w_B]$. Thus, $\phi(C)$ is contained in $C$. But $\phi(C)$ has order $2$, so $\phi(C)=C$. 

Up to composing $\phi$ by $\mathrm{ad}(g^{-1})$, one can assume that $g=1$. Hence, one has $\phi(A)=A$ and $\phi(B)=B^h$. If $h$ belongs to $A$ or $B$, then $\phi$ is clearly surjective, so it is an automorphism of $G$ since $G$ is Hopfian. If $h$ does not belong to $A\cup B$, then one easily checks that $\phi$ is injective.

We have proved that there exists a monomorphism $\phi : G\hookrightarrow G$ which sends $u$ to $v$. Likewise, there exists a monomorphism $\psi : G\hookrightarrow G$ sending $v$ to $u$.

\subsection{Second step}We shall modify the monomorphism $\phi$ (if necessary) in order to get an automorphism of $G$ sending $u$ to $v$. First of all, let us prove the following lemma.

\begin{lemme}The group $G$ splits non-trivially over a finite subgroup relative to $\langle u\rangle$ if and only if $\langle u\rangle$ has finite order.
\end{lemme}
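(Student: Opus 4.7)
The plan is to prove both directions using Bass--Serre theory applied to the splitting $G = A \ast_C B$, reducing the harder direction to the structure of $\mathrm{PSL}_2(\mathbb{Z}) = G/C \simeq \mathbb{Z}/2\mathbb{Z} \ast \mathbb{Z}/3\mathbb{Z}$.

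For the implication $(\Leftarrow)$, if $\langle u\rangle$ is finite then the classical fact that every finite subgroup of an amalgamated product is conjugate into one of the factors ensures that $\langle u\rangle$ is conjugate into $A$ or $B$. Hence $\langle u\rangle$ fixes a vertex of the Bass--Serre tree of $G = A\ast_C B$, and this splitting is a non-trivial splitting of $G$ over the finite subgroup $C$ that is relative to $\langle u\rangle$.

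For the implication $(\Rightarrow)$ I argue contrapositively: assume $\langle u\rangle$ is infinite and suppose, toward a contradiction, that $G$ admits a non-trivial splitting over a finite subgroup $F$ with $\langle u\rangle$ contained in a vertex group. HNN extensions are excluded since $G^{\mathrm{ab}} = \mathbb{Z}/12\mathbb{Z}$ has no surjection onto $\mathbb{Z}$, so the splitting is an amalgam $G = P \ast_F Q$ with $\langle u\rangle \leq P$ up to conjugation. The center $C$ is finite and normal, so its fixed-point set in the Bass--Serre tree is a non-empty $G$-invariant subtree, equal to the whole tree by minimality; consequently $C$ acts trivially and $C \leq F$. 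Quotienting by $C$ yields a non-trivial splitting $\mathrm{PSL}_2(\mathbb{Z}) = \bar P \ast_{\bar F} \bar Q$ with $\bar F$ a finite subgroup of $\mathrm{PSL}_2(\mathbb{Z})$, and it suffices to show $\bar P$ is finite.

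When $\bar F$ is trivial, this is a free product decomposition of $\mathrm{PSL}_2(\mathbb{Z}) = \mathbb{Z}/2\mathbb{Z} \ast \mathbb{Z}/3\mathbb{Z}$; by uniqueness of the Grushko decomposition (both factors being freely indecomposable), $\bar P$ and $\bar Q$ are conjugates of $\mathbb{Z}/2\mathbb{Z}$ and $\mathbb{Z}/3\mathbb{Z}$, hence finite. When $\bar F$ is non-trivial (conjugate to $\mathbb{Z}/2\mathbb{Z}$ or $\mathbb{Z}/3\mathbb{Z}$), Kurosh's subgroup theorem lets us write, after conjugating within each vertex group, $\bar P = \bar F \ast W_1$ and $\bar Q = \bar F \ast W_2$ with $W_1, W_2$ non-trivial (otherwise the splitting is trivial); the amalgam then equals $\bar F \ast W_1 \ast W_2$, whose Grushko rank is at least $3$, contradicting the rank $2$ of $\mathrm{PSL}_2(\mathbb{Z})$. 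Thus $\bar P$, and therefore $P$, is finite, contradicting the assumption that $\langle u\rangle \leq P$ is infinite. The main obstacle is the non-trivial $\bar F$ case, which requires the Kurosh-based rank comparison sketched above.
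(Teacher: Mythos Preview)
Your proof is correct and follows a genuinely different route from the paper's.

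The paper argues entirely within the Stallings deformation space: given a non-trivial splitting $G=U\ast_W V$ relative to $\langle u\rangle$, it refines the Bass--Serre tree by Stallings trees of $U$ and $V$, observes that the resulting tree must collapse onto the (unique) reduced Stallings tree $T$ of $G$, and then uses that every elementary expansion of $T$ is redundant to conclude $T'=T$, forcing $U$ to be conjugate to $A$ or $B$.

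Your approach instead exploits the concrete fact that $C=Z(G)$: you push the hypothetical splitting down to $\mathrm{PSL}_2(\mathbb{Z})=\mathbb{Z}/2\mathbb{Z}\ast\mathbb{Z}/3\mathbb{Z}$ and finish with Grushko uniqueness (when $\bar F=1$) or a Kurosh-plus-rank count (when $\bar F\neq 1$). This is more elementary in that it avoids any appeal to deformation spaces, and it works cleanly here because the quotient is a free product of two finite cyclic groups of prime order, so every non-trivial finite subgroup is automatically a free factor of whatever Kurosh piece contains it. The paper's argument, by contrast, is the one that generalises: the uniqueness/cocompactness of the Stallings deformation space is exactly the mechanism used later for arbitrary virtually free groups, whereas your quotient trick depends on the very special shape of $\mathrm{SL}_2(\mathbb{Z})$.

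Two small points of presentation: the phrase ``after conjugating within each vertex group'' would be clearer as ``after conjugating the Kurosh decomposition of $\bar P$ by an element of $\bar P$ so that $\bar F$ itself appears as a free factor''; and your final sentence ``Thus $\bar P$\dots is finite'' only literally follows from the first case, since the second case yields a direct rank contradiction rather than finiteness of $\bar P$. Both cases do give contradictions, so the logic is sound, but the wrap-up could be phrased more carefully.
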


\begin{proof}Let $T$ denote the Bass-Serre tree associated with the splitting $A\ast_C B$ of $G$. If $\langle u\rangle$ has finite order, then $T$ gives a splitting of $G$ relative to $\langle u\rangle$.

Conversely, assume that $G$ splits non-trivially over a finite subgroup relative to $\langle u\rangle$. First, observe that $G$ does not admit any epimorphism onto $\mathbb{Z}$ since its abelianization is finite, so $G$ cannot split as an HNN extension. Consequently, $G$ splits as $U\ast_W V$ with $W$ finite and strictly contained in $U$ and $V$, and $\langle u\rangle\leq U$. Let $\Gamma$ be the Bass-Serre tree of this splitting. Let $T_U$ and $T_V$ be two reduced Stallings trees of $U$ and $V$ respectively, and let $T'$ be the Stallings tree of $G$ obtained by refining $\Gamma$ with $T_U$ and $T_V$. Up to forgetting the possibly redundant vertices of $T'$, one can assume that $T'$ is non-redundant. Since $T$ is the unique reduced Stallings tree of $G$, the tree $T'$ collapses onto $T$. Moreover, every expansion of $T$ is redundant. Thus, $T'=T$. It follows that $U$ is conjugate to $A$ or $B$. In particular, $\langle u\rangle$ has finite order.\end{proof}

We shall now prove that there exists an automorphism of $G$ which sends $u$ to $v$. There are two cases. 

\emph{First case.} If $G$ is one-ended relative to $\langle u\rangle$ (i.e.\ if $\langle u\rangle$ has infinite order), then $G$ is co-hopfian relative to $\langle u\rangle$ (cf. Theorem \ref{coHopf}). Therefore, the monomorphism $\psi\circ \phi$ is an automorphism of $G$, since it fixes $\langle u\rangle$. As a consequence, $\phi$ and $\psi$ are two automorphisms.

\emph{Second case.} If $G$ is not one-ended relative to $\langle u\rangle$ (i.e.\ if $\langle u\rangle$ has finite order), then $\phi$ is not an automorphism \textit{a priori}. We claim that it is possible to modify $\phi$ in such a way as to obtain an automorphism of $G$ which maps $u$ to $v$. First, observe that there exist two elements $g$ and $h$ of $G$ such that $\phi(A)=A^g$ and $\phi(B)=B^h$, because $\phi(A)$ has order 4 and $\phi(B)$ has order 6. Since $g$ and $h$ centralize $C$, one can define an endomorphism $\alpha$ of $G$ by setting $\alpha_{\vert A}=\mathrm{ad}(g^{-1})\circ\phi_{\vert A}$ and $\alpha_{\vert B}=\mathrm{ad}(h^{-1})\circ \phi_{\vert B}$. This homomorphism is surjective since its image contains $A$ and $B$. As $G$ is Hopfian, $\alpha$ is an automorphism of $G$. Moreover, since $v=\phi(u)$ has finite order, $\alpha$ sends $v$ to a conjugate $v^{\gamma}$ of $v$. Hence, the automorphism $\mathrm{ad}(\gamma^{-1})\circ \alpha$ sends $u$ to $v$. 

\begin{rque}Note that $\mathrm{SL}_n(\mathbb{Z})$ is $\exists$-homogeneous for $n\geq 3$ as well. Indeed, it follows from superrigidity that any endomorphism of $\mathrm{SL}_n(\mathbb{Z})$ (with $n\geq 3$) is either has finite image or is injective.\end{rque}

\subsection{Generalization}

In the next section \ref{mono}, we shall prove that the first step in the previous proof remains valid for any virtually free group $G$. More precisely, if $u$ and $v$ have the same type, then there is an endomorphism of $G$ which sends $u$ to $v$ and whose restriction to the maximal one-ended subgroup containing $\langle u\rangle$ is injective (see Proposition \ref{prop1}). As above, the proof will rely on first-order logic, but it will require $\forall\exists$-sentences in general (instead of $\exists$-sentences).

However, the second step does not work anymore in general: in Section \ref{contre-exemple}, we give a counterexample. 

\section{Isomorphism between relative one-ended factors}\label{mono}

The proposition below should be compared with the first step in the previous section.

\begin{prop}\label{prop1}
Let $G$ be a virtually free group, let $k\geq1$ be an integer and let $u,v\in G^k$. Let $U$ be the maximal one-ended subgroup of $G$ relative to $\langle u\rangle$, and let $V$ be the maximal one-ended subgroup of $G$ relative to $\langle v\rangle$. If $u$ and $v$ have the same type, then
\begin{itemize}
\item[$\bullet$]there exists an endomorphism $\phi$ of $G$ which sends $u$ to $v$ and whose restriction to $U$ is injective,
\item[$\bullet$]there exists an endomorphism $\psi$ of $G$ which sends $v$ to $u$ and whose restriction to $V$ is injective.
\end{itemize}
In fact, it is enough to suppose that $u$ and $v$ have the same $\forall\exists$-type.
\end{prop}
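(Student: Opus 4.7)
Following the Perin–Sklinos proof for free groups and the $\mathrm{SL}_2(\mathbb{Z})$ warm-up, the plan is to encode the existence of the desired endomorphism as a first-order formula of controlled quantifier complexity that is trivially satisfied by $u$ via $\phi = \mathrm{id}_G$, and to transfer its truth to $v$ using the equal $\forall\exists$-type hypothesis (via duality, which also preserves $\exists\forall$-formulas). We construct $\phi$; the construction of $\psi$ is symmetric. Fix a finite presentation $G = \langle g_1,\dots,g_m \mid R_1,\dots,R_N\rangle$, write $u_i = w_i(\mathbf{g})$, and fix a generating set of $U$. Since $U$ is a vertex stabilizer in a Stallings decomposition of $G$ relative to $\langle u\rangle$, it is finitely generated hyperbolic and one-ended relative to $\langle u\rangle$ by maximality. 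Apply Theorem \ref{short} to the pair $(U,\langle u\rangle)$ with target $G$ and monomorphism $\psi:\langle u\rangle \hookrightarrow G$ defined by $u_i \mapsto x_i$ (injective since $u$ and $v$ share the same atomic type) to obtain a finite set $F \subset U \setminus \{1\}$ detecting non-injectivity of homomorphisms $\rho: U \to G$ coinciding with $\psi$ on $\langle u\rangle$ up to conjugation.

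Construct the $\exists\forall$-formula
\[
\theta(\mathbf{x}) \;:\; \exists \mathbf{z}\, \forall \mathbf{y}\, \Phi(\mathbf{x},\mathbf{z},\mathbf{y}),
\]
where the existential tuple $\mathbf{z} \in G^m$ specifies a homomorphism $\rho: g_i \mapsto z_i$ (via $R_k(\mathbf{z}) = 1$) with $w_i(\mathbf{z}) = x_i$, so that $\rho(u) = \mathbf{x}$; the universal tuple $\mathbf{y}$ parameterizes a candidate modular automorphism $\sigma \in \mathrm{Mod}_{\langle u\rangle}(U)$; and $\Phi$ asserts that whenever $\mathbf{y}$ actually encodes such a $\sigma$, one has $\rho(\sigma(f)) \neq 1$ for every $f \in F$. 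Then $G \models \theta(u)$ by taking $\mathbf{z} = \mathbf{g}$, so $\rho = \mathrm{id}_G$ and $\rho(\sigma(f)) = \sigma(f) \neq 1$ for any modular $\sigma$ and any $f \neq 1$. Since $\theta$ is $\exists\forall$ and is therefore preserved under equality of $\forall\exists$-types, one deduces $G \models \theta(v)$; the corresponding witness yields an endomorphism $\phi$ of $G$ with $\phi(u) = v$ satisfying the universal clause. If $\phi|_U$ were non-injective, the contrapositive of Theorem \ref{short} would produce $\sigma \in \mathrm{Mod}_{\langle u\rangle}(U)$ and $f \in F$ with $\phi(\sigma(f)) = 1$, contradicting the universal clause of $\theta(v)$. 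Hence $\phi|_U$ is injective, as required.

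The central difficulty is to render the universal clause — the conjunction of "$\mathbf{y}$ encodes a modular automorphism of $U$" and the evaluations $\rho(\sigma(f))$ — as a genuine quantifier-free formula in $\mathbf{y}$ and $\mathbf{z}$. This requires parameterizing elements of $\mathrm{Mod}_{\langle u\rangle}(U)$ via the finitely many pieces of data constituting them: conjugating elements on the non-QH vertex groups of the $\mathcal{Z}$-JSJ of $U$ relative to $\langle u\rangle$, Dehn twist parameters on its edges, and actions on generators of its QH vertex groups, in the style of Lemma \ref{deltarelies}. A secondary subtlety is that the set $F$ furnished by Theorem \ref{short} formally depends on $\psi$, hence on $\mathbf{x}$; this is handled either by establishing a uniform version of the shortening argument over the family of embeddings $\psi$ induced by tuples of the same type as $u$, or by existentially quantifying the test elements $f_j$ as part of the witness $\mathbf{z}$ together with clauses asserting their shortening-argument role. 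Once these technical points are resolved, the proof proceeds exactly as above.
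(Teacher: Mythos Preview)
Your central difficulty is fatal rather than technical. Any first-order parametrisation of the kind you suggest --- conjugating elements on non-QH vertices, images of QH generators, edge compatibilities --- describes, in quantifier-free terms, exactly the class of homomorphisms $U\to G$ that are $\Delta$-related to the inclusion (i.e.\ preretractions in the sense of Definition~\ref{pre}), not the modular automorphisms. There is no quantifier-free way to impose that such a map is a bijection of $U$, nor even that its image lies in $U$, since neither $U$ nor its QH vertex groups are definable subsets of $G$. Hence your verification of $G\models\theta(u)$ with $\rho=\mathrm{id}$ collapses: you would need $\sigma(f)\neq 1$ for every $f\in F$ and every \emph{preretraction} $\sigma:U\to G$, not just for every modular automorphism, and preretractions can perfectly well be non-injective.

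The paper's proof confronts precisely this point. It uses $\Delta$-relatedness (Lemma~\ref{deltarelies}) rather than trying to isolate modular automorphisms, and it proceeds by contradiction: assuming the conclusion fails, the $\forall\exists$-formula $\mu$ transfers from $v$ to $u$, and evaluating at $\phi=\mathrm{id}_G$ produces a non-injective preretraction $p:U\to G$. The substantive work --- entirely absent from your outline --- is then to rule out such a $p$. This uses the virtually-free hypothesis in an essential way: one extracts a QH vertex of the relative JSJ of $U$ on which $p$ is not an isomorphism onto a conjugate, builds a centered splitting of $G$ with that vertex as centre, and obtains a non-degenerate preretraction of $G$, contradicting Lemma~\ref{cyclic2}. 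Without this structural input there is no reason for your formula $\theta$ to hold at $u$; your argument never invokes anything specific to virtually free groups.
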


The following corollary is easy.

\begin{co}\label{corol}
Let $G$ be a virtually free group, let $k$ be an integer and let $u,v\in G^k$. Let $U$ be the maximal one-ended subgroup of $G$ relative to $\langle u\rangle$, and let $V$ be the maximal one-ended subgroup of $G$ relative to $\langle v\rangle$. If $u$ and $v$ have same type, then there exists an endomorphism of $G$ which maps $u$ to $v$ and induces an automorphism between $U$ and $V$.
\end{co}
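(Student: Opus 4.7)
Proposition \ref{prop1} already supplies a monomorphism $\phi : G \to G$ with $\phi(u)=v$ and $\phi_{|U}$ injective, together with a monomorphism $\psi : G \to G$ with $\psi(v)=u$ and $\psi_{|V}$ injective. My goal is to show that $\phi$ itself (no modification needed) restricts to an isomorphism $U \to V$, by using Theorem \ref{coHopf} applied inside $U$ and $V$.

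\textbf{Step 1: track where $\phi$ sends $U$ and $\psi$ sends $V$.} Since $\phi_{|U}$ is injective, $\phi(U)$ is isomorphic to $U$, hence one-ended relative to $\langle \phi(u)\rangle = \langle v\rangle$. By maximality of $V$ (the maximal one-ended subgroup relative to $\langle v\rangle$), this forces $\phi(U) \subseteq V$. Symmetrically, $\psi(V) \subseteq U$.

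\textbf{Step 2: apply the relative co-Hopf property.} The composition $\psi \circ \phi$ restricted to $U$ therefore takes values in $U$ and is injective, with $(\psi\circ\phi)(u) = \psi(v) = u$; so $(\psi\circ\phi)_{|U}$ is a monomorphism of $U$ that is the identity on $\langle u\rangle$. Because $U$ is virtually free, hence hyperbolic, $\langle u\rangle$ is finitely generated, and $U$ is one-ended relative to $\langle u\rangle$ by definition, Theorem \ref{coHopf} implies that $(\psi\circ\phi)_{|U}$ is an automorphism of $U$. The same argument applied to $(\phi\circ\psi)_{|V}$ gives an automorphism of $V$; in particular $\phi\bigl(\psi(V)\bigr)=V$.

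\textbf{Step 3: conclude that $\phi_{|U}$ is an isomorphism onto $V$.} Chaining the inclusions from Step 1 with the surjectivity obtained in Step 2 yields
\[
V \;=\; \phi\bigl(\psi(V)\bigr) \;\subseteq\; \phi(U) \;\subseteq\; V,
\]
so $\phi(U)=V$, and combined with the injectivity of $\phi_{|U}$ this gives an isomorphism $\phi_{|U}:U \xrightarrow{\ \sim\ } V$. The endomorphism $\phi$ then has all the required properties. The one potential snag is making sure $U$ is eligible for Theorem \ref{coHopf}; but finitely generated subgroups of virtually free groups are themselves virtually free and thus hyperbolic, and $\langle u\rangle$ is manifestly finitely generated, so this is automatic.
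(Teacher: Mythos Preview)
Your proof is correct and follows essentially the same route as the paper's: use Proposition~\ref{prop1} to get $\phi,\psi$, observe $\phi(U)\subset V$ and $\psi(V)\subset U$, and apply the relative co-Hopf property (Theorem~\ref{coHopf}) to conclude that $\phi_{|U}$ is an isomorphism onto $V$. One small slip: you call $\phi$ and $\psi$ ``monomorphisms $G\to G$'', but Proposition~\ref{prop1} only asserts they are \emph{endomorphisms} whose restrictions to $U$ (resp.\ $V$) are injective---fortunately your argument only ever uses injectivity on $U$ and $V$, so nothing breaks. Your Step~3 spells out the equality $\phi(U)=V$ via a second application of Theorem~\ref{coHopf} to $V$; the paper gets there slightly more economically from the single fact that $(\psi\circ\phi)_{|U}$ is onto $U$ together with injectivity of $\psi_{|V}$, but both routes are fine.
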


\begin{proof}
Since $\phi(u)=v$, we have $\phi(U)\subset V$. Likewise, $\psi(V)\subset U$. The monomorphism $\psi\circ \phi_{\vert U} : U \hookrightarrow U $ fixes $\langle u\rangle$, thus it is an automorphism since $U$ is co-Hopfian relative to $\langle u\rangle$ (see Theorem \ref{coHopf}). Hence, $\phi$ induces an isomorphism between $U$ and $V$.\end{proof}

Before proving Proposition \ref{prop1}, we need some preliminary results. For now, let us admit the following lemma that will be usefull in the sequel (we refer the reader to Section \ref{preuvecyclic2} for the proof of this result).

\begin{lemme}\label{cyclic2}Let $G$ be a virtually free group, and let $\Delta$ be a centered splitting of $G$. Then $G$ does not admit any non-degenerate $\Delta$-preretraction.\end{lemme}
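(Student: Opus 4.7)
I argue by contradiction: suppose $\phi$ is a non-degenerate $\Delta$-preretraction of $G$, and let $v$ denote the central vertex of $\Delta$, with associated finite-by-orbifold structure $F\hookrightarrow G_v\twoheadrightarrow \pi_1(\mathcal{O})$. The strategy is to apply Lemma \ref{magique} and then leverage the hypothesis that $G$ is virtually free to derive a contradiction.

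The first step is to apply Lemma \ref{magique}: choose a maximal pinched set $S$ for $\phi$ (possibly empty) and consider the refined splitting $\Delta(S)$, with its set $\mathcal{S}$ of edges coming from $S$ (each with infinite cyclic stabilizer) and its set $\mathcal{F}$ of edges with finite stabilizer. For each connected component $W$ of $\Delta(S)\setminus(\mathcal{S}\cup \mathcal{F})$, the image $\phi(G_W)$ is elliptic in the Bass--Serre tree $T_\Delta$ of $\Delta$, and $W$ contains at most one non-central vertex $w$; if it does, then $\phi(G_W)=\phi(G_w)$.

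The second step is to exploit two consequences of $G$ being virtually free. On the one hand, every subgroup of $G$ is virtually free, so each $G_W$ is virtually free, every infinite subgroup of $G$ has at least two ends, and $G_v$ itself admits a Stallings decomposition with finite vertex and edge groups. On the other hand, since $\phi$ is $\Delta$-related to the identity, it restricts to conjugation on every non-central vertex group and on every finite subgroup of $G$---in particular, on every vertex and edge group of such a Stallings decomposition of $G_v$.

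The third step, which I expect to be the main obstacle, is to combine the ellipticity of each $\phi(G_W)$ in $T_\Delta$ with the local conjugation data on the finite pieces of $G_v$ to force $\phi|_{G_v}$ to be an isomorphism onto a conjugate of $G_v$, contradicting non-degeneracy. Concretely, the ellipticity places each $\phi(G_W)$ inside a single vertex stabilizer of $T_\Delta$, while conjugation on the finite Stallings pieces of $G_v$ produces a family of local conjugators; the heart of the argument is to show that these conjugators glue, across both the edges of $\Delta(S)$ coming from the dual splitting along $S$ and the edges introduced by the Stallings refinement of $G_v$, into a single global conjugator realising $\phi|_{G_v}$ as an inner automorphism. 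The case $S=\emptyset$ should be comparatively direct: then $\phi$ is injective on each infinite cyclic subgroup of an essential curve, and the constraint that each component $W$ carry at most one non-central vertex (together with the fact that no infinite subgroup of $G$ is one-ended) already strongly restricts $\Delta$ and forces $\phi(G_v)$ into a conjugate of $G_v$. The case $S\neq\emptyset$ demands a more delicate accounting of how the pinched curves interact with the Stallings decomposition of $G_v$, and is where I anticipate the technical bulk of the proof.
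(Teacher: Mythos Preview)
Your plan has a genuine gap in Step~3, and the overall strategy differs substantially from the paper's.

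The core problem is that the gluing you envision cannot succeed as stated. When $S\neq\varnothing$, the map $\phi$ sends the preimage $q^{-1}(\alpha)\subset G_v$ of each pinched curve $\alpha\in S$ to a finite group, so $\phi|_{G_v}$ has infinite kernel and is certainly \emph{not} an isomorphism onto anything; the stated goal of Step~3---forcing $\phi|_{G_v}$ to be an isomorphism onto a conjugate of $G_v$---is therefore unattainable in this case, and no amount of ``delicate accounting'' of the interaction between $S$ and a Stallings decomposition of $G_v$ will change that. Even when $S=\varnothing$, knowing that $\phi$ restricts to a conjugation on each finite vertex group of a Stallings decomposition of $G_v$ imposes only the weak compatibility that successive conjugators differ by centralizers of the (finite) edge groups; this does not produce a global conjugator, nor does it force $\phi(G_v)$ to sit in a single vertex stabilizer of $T_\Delta$. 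The ellipticity supplied by Lemma~\ref{magique} concerns the images $\phi(G_W)$ of the components $W$, and whenever $W$ contains a non-central vertex $w$ it collapses to $\phi(G_W)=\phi(G_w)$, which is already a conjugate of $G_w$ by $\Delta$-relatedness---so you gain nothing new there.

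The paper's argument is organised entirely differently and never attempts to analyse $\phi|_{G_v}$ directly. It establishes two auxiliary results. First (Lemma~\ref{deuxieme}): from a non-degenerate $\Delta$-preretraction one extracts a subgroup $H\leq G$ carrying a non-trivial \emph{minimal} splitting $\Gamma$ over virtually cyclic groups with infinite center in which every vertex group is one-ended relative to its incident edge groups; here $\Gamma$ is obtained by refining each non-central vertex of $\Delta$ by a relative Stallings splitting and keeping the component of $v$ after deleting all finite-stabilizer edges, and Lemma~\ref{magique} together with an explicit relation in the orbifold group $\pi_1(\mathcal{O}_k)$ (treating the orientable and non-orientable cases separately) is used to verify minimality. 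Second (Lemma~\ref{penible}): for any virtually free group with such a splitting $\Gamma$, some vertex group \emph{must} split non-trivially over a finite group relative to its incident edges; this is proved via Stallings' folding theory, by arranging a folding sequence from a Stallings tree of $H$ to the Bass--Serre tree of $\Gamma$ so that the last fold involves an edge with finite stabilizer, and reading off the desired relative splitting from that fold. These two lemmas contradict each other, giving Lemma~\ref{cyclic2}. The key conceptual move missing from your plan is this passage from the preretraction to a purely structural statement about $\mathcal{Z}$-splittings with relatively one-ended vertex groups, followed by an accessibility-type obstruction specific to virtually free groups.
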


We shall also need the following easy result.

\begin{lemme}\label{petitlemme}
Let $G$ be a group with a splitting over finite groups. Denote by $T$ the associated Bass-Serre tree. Let $H$ be a group with a splitting over infinite groups, and let $S$ be the associated Bass-Serre tree. If $p:H\rightarrow G$ is a homomorphism injective on edge groups of $S$, and such that $p(H_v)$ is elliptic in $T$ for every vertex $v$ of $S$, then $p(H)$ is elliptic in $T$.
\end{lemme}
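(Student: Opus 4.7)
The plan is to produce a global fixed point $x\in T$ for $p(H)$ by analyzing, vertex by vertex, the non-empty fixed-point subtree $F_v\subseteq T$ of $p(H_v)$, and then extending from vertex stabilizers to all of $H$ via equivariance. Set $F_v := \mathrm{Fix}(p(H_v))\subseteq T$ for each vertex $v$ of $S$; by hypothesis, $F_v$ is a non-empty subtree.

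First I would analyze a single edge $e=[v,w]$ of $S$. Since $H_e$ is infinite (it is an edge group of a splitting of $H$ over infinite groups) and $p$ is injective on $H_e$, the image $p(H_e)$ is infinite in $G$. Because $p(H_e)\subseteq p(H_v)\cap p(H_w)$, it pointwise fixes $F_v$ and $F_w$. For any $x\in F_v$ and $y\in F_w$, the group $p(H_e)$ therefore fixes the geodesic segment $[x,y]$. Now edge stabilizers in $T$ are finite (since $G$ splits over finite groups), so no infinite subgroup of $G$ can fix any edge of $T$; the segment $[x,y]$ must be reduced to a point, i.e.\ $x=y$. Running this for all choices of $x\in F_v$ and $y\in F_w$ shows that $F_v=F_w$ and that both consist of a single common point.

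Since $S$ is connected, iterating this equality along paths in $S$ yields a single point $x\in T$ such that $F_v=\{x\}$ for every vertex $v$ of $S$.

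The last step, which I view as the main subtlety, is passing from vertex stabilizers to all of $H$: when the quotient graph of groups of $H$ has non-trivial first homology (e.g.\ in HNN extensions), $H$ is not generated by the union of its vertex stabilizers in $S$, due to stable letters, so a naive generation argument fails. I would sidestep this using equivariance. For any $h\in H$ and any vertex $v$ of $S$, the translate $h\cdot v$ is again a vertex of $S$, and its stabilizer is $H_{h\cdot v}=hH_vh^{-1}$. Applying the previous step at $h\cdot v$, the group $p(H_{h\cdot v})=p(h)\,p(H_v)\,p(h)^{-1}$ fixes $x$, which is equivalent to saying that $p(H_v)$ fixes the point $p(h)^{-1}\cdot x$. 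Hence $p(h)^{-1}\cdot x\in F_v=\{x\}$, and so $p(h)\cdot x=x$. This holds for every $h\in H$, so $p(H)$ fixes $x$ and is elliptic in $T$, as required.
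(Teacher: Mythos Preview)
Your proof is correct and follows essentially the same approach as the paper: both show that adjacent vertex groups of $S$ have images with a common unique fixed point in $T$ (using that their intersection contains an infinite group while edge stabilizers of $T$ are finite), propagate this by connectivity, and then pass to all of $H$ via equivariance. Your final step is in fact more transparently written than the paper's, which phrases the same equivariance argument through a $p$-equivariant constant map $f:S\to\{x\}$.
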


\begin{proof}Consider two adjacent vertices $v$ and $w$ in $S$. Let $H_v$ and $H_w$ be their stabilizers. The group $H_v\cap H_w$ is infinite by hypothesis. Moreover, $p$ is injective on edge groups, thus $p(H_v\cap H_w)$ is infinite. Hence $p(H_v)\cap p(H_w)$ is infinite. Since edge groups of $T$ are finite, $p(H_v)$ and $p(H_w)$ fix necessarily the same unique vertex $x$ of $T$. As a consequence, for each vertex $v$ of $S$, $p(H_v)$ fixes $x$. It follows that there exists a $p$-equivariant map $f : S \rightarrow T$ such that $f(S)=\lbrace x\rbrace$. Let $h$ be an element of $H$, and let $v$ be any vertex of $S$. One has $x=f(h\cdot v)=p(h)\cdot f(v)=p(h)\cdot x$, so $p(h)$ fixes the vertex $x$. Thus, $p(H)$ fixes $x$.\end{proof}

We shall now prove Proposition \ref{prop1}.

\begin{proof}
Assume towards a contradiction that the proposition is false. We shall build a centered splitting $\Delta$ of $G$, together with a non-degenerate $\Delta$-preretraction, and this will be a contradiction, according to Lemma \ref{cyclic2} above.

Here are the differents steps of the proof. 

\begin{itemize}
\item[$\bullet$]Step 1. Let $\Lambda$ denote the $\mathcal{Z}$-JSJ splitting of $U$ relative to $u$. We build a non-injective preretraction $p : U \rightarrow G$ (see Definition \ref{pre}).
\item[$\bullet$]Step 2. We prove that there is a QH vertex in $\Lambda$, denoted by $x$, such that $p$ does not send $U_x$ isomorphically to a conjugate of itself.
\item[$\bullet$]Step 3. We build a centered splitting $\Delta$ of $G$, whose central vertex is the vertex $x$ previously found. Then, we define an endomorphism $\phi$ of $G$ as follows: $\phi$ coincides with $p$ on the central vertex group $G_x=U_x$, and $\phi$ coincides with the identity map anywhere else. This endomorphism is a non-degenerate $\Delta$-preretraction.
\end{itemize}

Let us now move to the proofs.

\emph{Step 1.} We shall build a non-injective preretraction $p : U \rightarrow G$. First, let $F=\lbrace w_1,\ldots ,w_{\ell}\rbrace\subset U\setminus \lbrace 1\rbrace$ be the finite set given by Theorem \ref{short}, such that every non-injective homomorphism from $U$ to $G$ which coincides with the identity map on $\langle u\rangle$ kills an element of $F$, up to precomposition by a modular automorphism of $U$ relative to $\langle u\rangle$. Since we have assumed that Proposition \ref{prop1} is false, up to interchanging $u$ and $v$, every endomorphism $\phi$ of $G$ that sends $u$ to $v$ is non-injective in restriction to $U$. As a consequence, for every endomorphism $\phi$ of $G$ that sends $u$ to $v$, the homomorphism $\phi_{\vert U}$ is related (see Definition \ref{reliés2}) to a homomorphism $\psi : U\rightarrow G$ that kills an element of $F$ (this homomorphism $\psi$ is of the form $\phi_{\vert U}\circ\sigma$ for some $\sigma\in \mathrm{Mod}_{\langle u\rangle}(U)$). We shall see that this statement can be expressed by means of a first-order formula $\mu(\mathbf{z})$ with $k$ free variables satisfied by $v$. 

Let $G=\langle s_1,\ldots ,s_n \ \vert \ \Sigma(s_1,\ldots,s_n)\rangle$ be a finite presentation of $G$. Observe that there is a one-to-one correspondance between the set of endomorphisms of $G$ and the set of solutions in $G^n$ of the system of equations $\Sigma(x_1,\ldots,x_n)=1$. Let $U=\langle t_1,\ldots ,t_p \ \vert \ \Pi(t_1,\ldots ,t_p)\rangle$ be a finite presentation of $U$. Similarly, there is a one-to-one correspondance between $\mathrm{Hom}(U,G)$ and the set of solutions in $G^p$ of the system of equations $\Pi(x_1,\ldots,x_p)=1$. According to Lemma \ref{deltarelies}, there exists an existential formula $\theta(x_1,\ldots , x_{2p})$ with $2p$ free variables such that $\phi,\phi'\in \mathrm{Hom}(U,G)$ are $\Delta$-related if and only if the statement $\theta\left(\phi(t_1),\ldots , \phi(t_p),\phi'(t_1),\ldots ,\phi'(t_p)\right)$ is true in $G$. Each element $u_i$ can be seen as a word $u_i(s_1,\ldots,s_n)$ on the generators $s_1,\ldots ,s_n$ of $G$. Likewise, each generator $t_i$ of $U< G$ can be seen as word $t_i(s_1,\ldots ,s_n)$ on the generators of $G$, and each element $w_i\in F$ can be seen as a word $w_i(t_1,\ldots ,t_p)$ on the generators of $U$. We define the formula $\mu(\mathbf{z})$ as follows:
\begin{align*}
\mu(\mathbf{z}) & : \forall x_1 \ldots\forall x_n \  ((\Sigma(x_1,\ldots,x_n)=1)\wedge \underset{i\in\llbracket 1,k\rrbracket}{\bigwedge}z_i=u_i(x_1,\ldots ,x_n)) \\
 & \Rightarrow \exists y_1, \ldots \exists y_p \left(\rule{0cm}{1cm}
       \begin{array}{lc}
                        &  \Pi(y_1,\ldots ,y_p)=1 \\
        & \wedge \ \underset{i\in\llbracket 1,\ell\rrbracket}{\bigvee} w_i(y_1,\ldots,y_p)=1 \\
                        &   \wedge \ \theta(t_1(x_1,\ldots ,x_n),\ldots ,t_p(x_1,\ldots ,x_n),y_1,\ldots ,y_p) \\                                       
        \end{array}
     \right).
\end{align*}
The statement $\mu(v)$ is true in $G$. Indeed, taking $x_i=s_i$ for every $1\leq i\leq n$, this statement tells us that for every endomorphism $\phi$ of $G$ which maps $u$ to $v$ (defined by setting $\phi(s_i)=x_i$ for every $1\leq i\leq n$), there exists a homomorphism $\psi : U \rightarrow G$, defined by setting $\psi(t_i)=y_i$ for every $1\leq i\leq p$, such that $\ker(\psi)\cap F$ is non-empty and $\psi$ and $\phi_{\vert U}$ are $\Delta$-related. Since $u$ and $v$ have the same type, the statement $\mu(u)$ is true in $G$ as well. In other words, for every endomorphism $\phi$ of $G$ that sends $u$ to itself, the homomorphism $\phi_{\vert U}$ is related to a homomorphism $\psi : U\rightarrow G$ that kills an element of $F$. Now, taking for $\phi$ the identity of $G$, we get a non-injective preretraction $p :U\rightarrow G$.

Note that $\mu(\mathbf{z})$ is a $\forall\exists$-formula. Consequently, it is enough to assume that $u$ and $v$ have the same $\forall\exists$-type.

\emph{Step 2.} We shall prove that there exists a QH vertex $x$ of $\Lambda$ such that $U_x$ is not sent isomorphically to a conjugate of itself by $p$. Recall that $\Delta_u$ is a Stallings splittings of $G$ relative to $u$. Assume towards a contradiction that each stabilizer $U_x$ of a QH vertex $x$ of $\Lambda$ is sent isomorphically to a conjugate of itself by $p$. In particular, $p(U_x)$ is contained in a conjugate of $U$ in $G$. As a consequence, $p(U_x)$ is elliptic in the Bass-Serre tree $T_u$ of $\Delta_u$, for every QH vertex $x$. On the other hand, if $y$ is a non-QH vertex of $\Lambda$, $p(U_y)$ is elliptic in $T_u$ by definition of $\Lambda$-relatedness. Therefore, it follows from Lemma \ref{petitlemme} that $p(U)$ is elliptic in $T_u$, because $p$ is injective on edge groups of $\Lambda$, and $T_u$ has finite edge groups. Moreover, since $p$ is inner on non-QH vertices of $\Lambda$, the group $p(U)$ is contained in $gUg^{-1}$ for some $g\in G$ (note that there exists at least one non-QH vertex since $U$ is not finite-by-(closed orbifold), as a virtually free group). Up to composing $p$ by the conjugation by $g^{-1}$, one can thus assume that $p$ is an endomorphism of $U$. Now, by Proposition \ref{lemmeperin}, $p$ is injective. This is a contradiction. Hence, we have proved that there exists a QH vertex $x$ of $\Lambda$ such that $U_x$ is not sent isomorphically to a conjugate of itself by $p$.

\emph{Step 3.} It remains to construt a centered splitting of $G$ (see Definition \ref{graphecentre}). First, we refine $\Delta_u$ by replacing the vertex fixed by $U$ by the JSJ splitting $\Lambda$ of $U$ relative to $u$. With a little abuse of notation, we still denote by $x$ the vertex of $\Delta_u$ corresponding to the QH vertex $x$ of $\Lambda$ defined in the previous step. Then, we collapse to a point every connected component of the complement of $\mathrm{star}(x)$ in $\Delta_u$ (where $\mathrm{star}(x)$ stands for the subgraph of $\Delta_u$ constituted of $x$ and all its incident edges). The resulting graph of groups, denoted by $\Delta$, is non-trivial. One easily sees that $\Delta$ is a centered splitting of $G$, with central vertex $x$.

The homomorphism $p:U\rightarrow G$ is well-defined on $G_x$ because $G_x=U_x$ is contained in $U$. Moreover, $p$ restricts to a conjugation on each stabilizer of an edge $e$ of $\Delta$ incident to $x$. Indeed, either $e$ is an edge coming from $\Lambda$, either $G_e$ is a finite subgroup of $U$; in each case, $p_{\vert G_e}$ is a conjugation since $p$ is $\Lambda$-related to the inclusion of $U$ into $G$. Therefore, one can define an endomorphism $\phi : G \rightarrow G$ that coincides with $p$ on $G_x=U_x$ and coincides with a conjugation on every vertex group $G_y$ of $\Delta_u$, with $y\neq x$. Hence, the endomorphism $\phi$ is $\Delta$-related to the identity of $G$ (in the sense of Definition \ref{reliés}), and $\phi$ does not send $G_x$ isomorphically to a conjugate of itself, by Step 2. Hence, $\phi$ is a non-degenerate $\Delta$-preretraction (see Definition \ref{special}).\end{proof}

It remains to prove Lemma \ref{cyclic2}.

\subsection{Proof of Lemma \ref{cyclic2}}\label{preuvecyclic2}We need two preliminary results. 

\begin{lemme}\label{deuxieme}Let $G$ be a hyperbolic group, and let $\Delta$ be a centered splitting of $G$. If $G$ admits a non-degenerate $\Delta$-preretraction, then it has a subgroup $H$ with the following property: there exists a non-trivial minimal splitting $\Gamma$ of $H$ over virtually cyclic groups with infinite center such that, for every vertex $x$ of $\Gamma$, the vertex group $H_x$ does not split non-trivially over a finite group relative to the stabilizers of edges incident to $x$ in $\Gamma$.\end{lemme}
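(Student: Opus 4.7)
The plan is to apply Lemma~\ref{magique} to the non-degenerate preretraction $\phi$ with a maximal pinched set $S$, take $H = G$, and build $\Gamma$ by collapsing every non-$\mathcal{S}$ edge of the refined splitting $\Delta(S)$.

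Let $\phi$ be a non-degenerate $\Delta$-preretraction with central vertex $v$ and underlying conical orbifold $\mathcal{O}$, and let $S$ be a maximal pinched set on $\mathcal{O}$. Lemma~\ref{magique} produces $\Delta(S)$ with the distinguished edge subsets $\mathcal{S}$ (stabilizers virtually cyclic with infinite center, dual to the curves of $S$) and $\mathcal{F}$ (finite stabilizers). Let $\Gamma$ be the graph of groups obtained from $\Delta(S)$ by collapsing all edges not in $\mathcal{S}$, then passing to the minimal $G$-invariant subtree; by construction its edges have virtually cyclic infinite-center stabilizers. I must then verify non-triviality of $\Gamma$, i.e.\ $\mathcal{S}\neq\emptyset$. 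If $S$ were empty, maximality would force $\phi_{|G_v}$ to be injective, and Lemma~\ref{magique} (applied with empty pinched set) together with $\Delta$-relatedness would factor $\phi_{|G_v}$ through a conjugate of $G_v$ as an embedding which is inner on every boundary subgroup of $\mathcal{O}$; Theorem~\ref{coHopf}, applied to the hyperbolic group $G_v$ relative to its boundary, would then force this embedding to be an isomorphism onto a conjugate of $G_v$, contradicting non-degeneracy.

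For the relative one-endedness condition on vertex groups, fix a vertex $x$ of $\Gamma$ and let $H_x = G_W$, where $W$ is the component of $\Delta(S)\setminus\mathcal{S}$ corresponding to $x$. Such a $W$ contains exactly one piece $v_i$ of $G_v$ (the pieces $v_j$ are pairwise separated by $\mathcal{S}$-edges) together with some non-central vertices attached to $v_i$ by $\mathcal{F}$- or extended-boundary edges. Suppose, for contradiction, that $H_x$ splits non-trivially over a finite subgroup relative to its incident $\mathcal{S}$-edge stabilizers. Standard Bass--Serre and orbifold arguments then show that such a splitting descends to a finite splitting of $G_{v_i}$ relative to the boundary subgroups of the sub-orbifold $\mathcal{O}_i$ (those inherited from $S$ and from $\partial\mathcal{O}$), which is dual to an essential simple closed curve $\alpha$ on $\mathcal{O}_i$, non-parallel to any curve of $S$. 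The fact that the splitting is over a finite subgroup forces $\phi(q^{-1}(\alpha))$ to be finite, so $\alpha$ is pinched by $\phi$. Adjoining $\alpha$ to $S$ yields a strictly larger pinched set, contradicting maximality.

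The principal obstacle is this last step: extracting a concrete new pinched essential curve on a sub-orbifold from an abstract finite splitting of $H_x$ relative to its incident infinite edges. The delicate points are localizing the finite splitting to a single orbifold piece $\mathcal{O}_i$ and recognizing it as dual to an essential simple closed curve, both of which rely on the topological structure of conical orbifolds and on the precise interaction between finite splittings and boundary subgroups of finite-by-orbifold groups. The alternative case $S=\emptyset$ (needed for non-triviality) is similarly delicate, as it requires knowing that $G_v$ is one-ended relative to its boundary so that Theorem~\ref{coHopf} is applicable.
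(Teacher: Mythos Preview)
Your approach diverges substantially from the paper's, and the step you flag as ``the principal obstacle'' is indeed a genuine gap that your argument does not close.

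You take $H=G$ and build $\Gamma$ by collapsing the non-$\mathcal{S}$ edges of $\Delta(S)$. A vertex group of your $\Gamma$ is then $G_W$ for $W$ a component of $\Delta(S)\setminus\mathcal{S}$; as you note, $W$ contains one orbifold piece $v_i$ together with the non-central vertices $w$ of $\Delta$ attached to it. Your claim that a finite relative splitting of $G_W$ must ``descend to an essential simple closed curve on $\mathcal{O}_i$'' does not hold: a non-central vertex group $G_w$ may itself split non-trivially over a finite subgroup in such a way that the edge groups joining $w$ to $v$ lie in one factor, and this immediately yields a finite splitting of $G_W$ in which the entire orbifold piece $G_{v_i}$---and hence every incident $\mathcal{S}$-edge group---is elliptic. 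No curve on $\mathcal{O}_i$ appears, and nothing about $\phi$ or the maximality of $S$ obstructs this. The further assertion that the finite edge group would force $\phi(q^{-1}(\alpha))$ to be finite is also unsupported: $\phi$ plays no role in that part of your argument.

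The paper's construction is essentially dual to yours and sidesteps this issue. It does \emph{not} refine the central vertex via the pinched set $S$; instead it refines each \emph{non-central} vertex $w$ by a Stallings splitting $\Lambda_w$ of $G_w$ relative to its incident edge groups, producing a splitting $\Delta'$ of $G$. One then removes all finite edges and takes the connected component $\Gamma$ of $v$; its fundamental group $H$ is a (generally proper) subgroup of $G$. With this choice, relative one-endedness of the vertex groups of $\Gamma$ is automatic: for $x\neq v$ it is built into the Stallings decomposition $\Lambda_w$, and for $x=v$ it is the fact that a finite-by-orbifold group does not split over a finite group relative to its boundary subgroups. The non-degenerate preretraction $\phi$ and a maximal pinched set $S$ enter only afterwards, solely to prove \emph{minimality} of $\Gamma$ (ruling out a valence-one vertex $x\neq v$ with $H_x=H_e$); that is where the orbifold computation with $\phi$ is actually used.
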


Recall that a splitting $\Gamma$ is said to be minimal if there is no proper subtree of the Bass-Serre tree $T$ of $\Gamma$ invariant under the action of $G$. Equivalently, $\Gamma$ is minimal if and only if $T$ has no vertex of degree equal to one, i.e.\ if $\Gamma$ has no vertex $v$ of degree equal to one such that $G_v=G_e$, where $e$ denotes the unique edge incident to $v$ in $\Gamma$.

\begin{proof}
Let $v$ denote the central vertex of $\Delta$. Let $\phi$ be a non-degenerate $\Delta$-preretraction. Let $\Delta'$ denote the splitting of $G$ obtained from $\Delta$ by replacing each vertex $w\neq v$ by a Stallings splitting $\Lambda_w$ of $G_w$ relative to the stabilizers of edges incident to $w$ in $\Delta$ (see Figure \ref{dessin} below). Let $\mathcal{F}$ be the set of edges $e$ of $\Delta'$ such that $G_e$ is finite. Let $\Gamma$ denote the connected component of $v$ in $\Delta'\setminus \mathcal{F}$ (see Figure \ref{dessin} below), and let $H$ be the fundamental group of $\Gamma$. 

\begin{figure}[!h]
\includegraphics[scale=0.7]{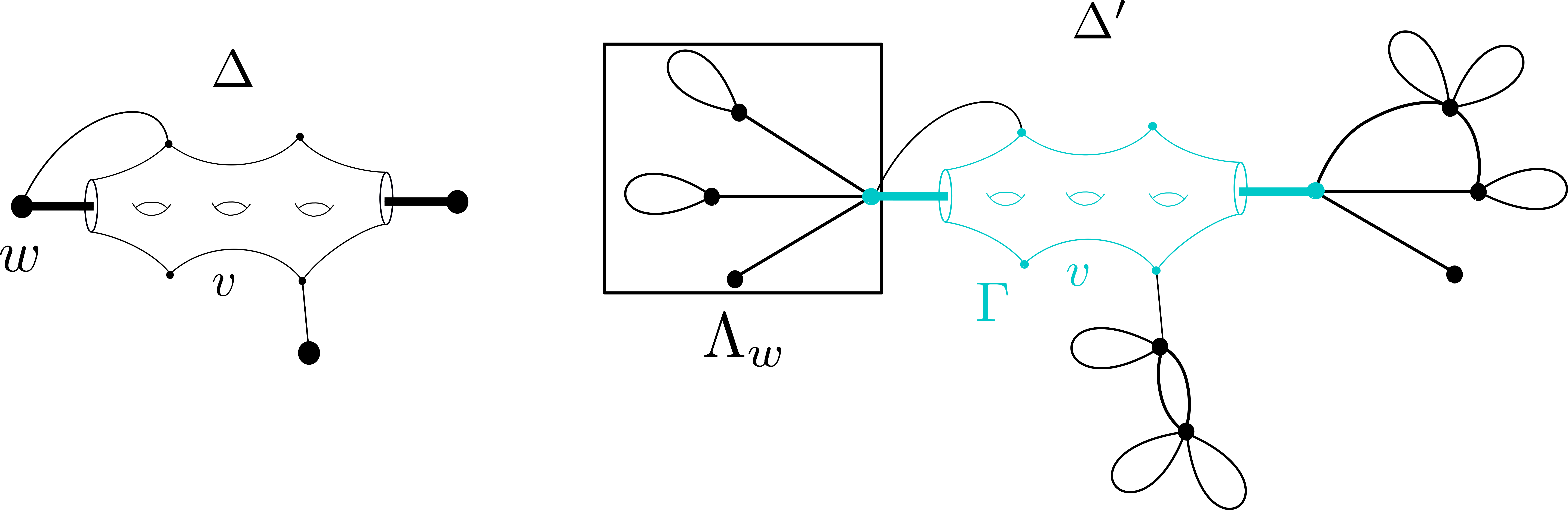}
\caption{The graphs of groups $\Delta$, $\Delta'$ and $\Gamma$. The latter is pictured in blue as a subgraph of $\Delta'$. Edges with infinite stabilizer are depicted in bold.}
\label{dessin}
\end{figure}

Note that for every vertex $x\neq v$ of $\Gamma$, the vertex group $H_x$ is one-ended relative to the incident edge groups. Moreover, since $H_v=G_v$ is an orbifold group, it does not split non-trivially over a finite group relative to its boundary subgroups. In order to prove the lemma, it remains to prove that $\Gamma$ is minimal. It is enough to prove that for every vertex $x\neq v$ of $\Gamma$ such that there is exactly one edge $e$ between $x$ and $v$ in $\Gamma$, the stabilizer $H_e$ of the edge $e$ is strictly contained in $H_x$.

Let $x$ be such a vertex of $\Gamma$, and let $e$ denote the unique edge between $x$ and $v$. By construction, there exists a vertex $w$ of $\Delta$ such that $x$ is a vertex of the Stallings splitting $\Lambda_w$ of $G_w$ relative to the stabilizers of edges incident to $w$ in $\Delta$. 

Let $S$ be a maximal pinched set for $\phi$ and let $\Delta(S)$ be the splitting of $G$ obtained from $\Delta$ by replacing the central vertex $v$ by the splitting of $G_v$ dual to $S$. Let $v_1,\ldots ,v_n$ be the new vertices coming from $v$ (see Figure \ref{dessin2} below).

\begin{figure}[!h]
\includegraphics[scale=0.7]{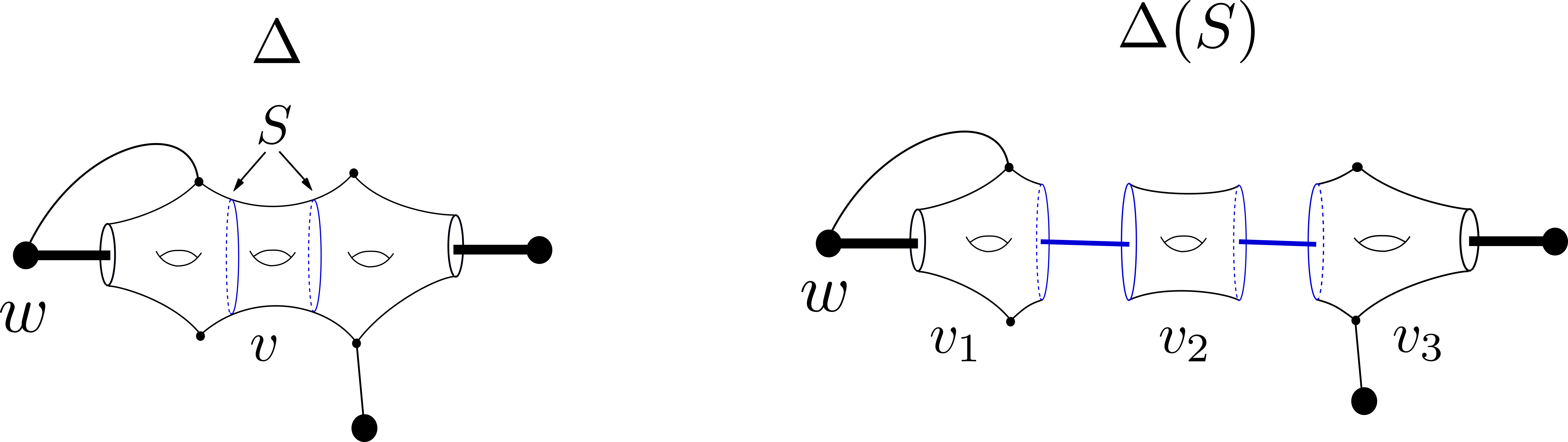}
\caption{On the left, the centered splitting $\Delta$ of $G$; edges with infinite stabilizer are depicted in bold. On the right, the splitting $\Delta(S)$ of $G$ is obtained from $\Delta$ by replacing the central vertex $v$ by the splitting of $G_v$ dual to $S$.}
\label{dessin2}
\end{figure}

We denote by $\mathcal{S}$ the set of edges of $\Delta(S)$ corresponding to the maximal pinched set $S$, and we denote by $\mathcal{F}$ the set of edges of $\Delta(S)$ whose stabilizer is finite. Let $W$ be the connected component of $\Delta(S)\setminus (\mathcal{S}\cup \mathcal{F})$ that contains the vertex $w$, and let $G_W$ denote its stabilizer (see Figure \ref{dessin3} below). By Lemma \ref{magique}, $W$ contains exactly two vertices: the vertex $w$ and a vertex $v_k$ coming from $v$. Moreover, $\phi(G_W)=\phi(G_w)$. 

\begin{figure}[!h]
\includegraphics[scale=0.7]{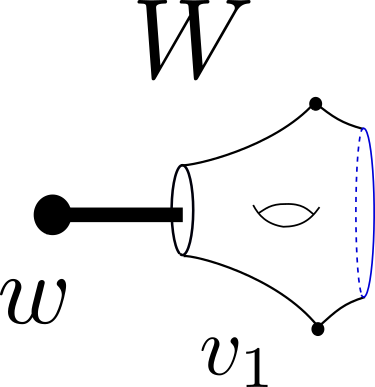}
\caption{$W$ is the connected component of $\Delta(S)\setminus (\mathcal{S}\cup \mathcal{F})$ that contains the vertex $w$.}
\label{dessin3}
\end{figure}

By definition of a non-degenerate $\Delta$-preretraction, there exists an element $g\in G$ such that the restriction of $\phi$ to $G_w$ coincides with the inner automorphism $\mathrm{ad}(g)$. Consequently, up to composing $\phi$ with $\mathrm{ad}(g^{-1})$, we can assume that $\phi$ coincides with the identity map on $G_w$ (i.e.\ that $\phi_{\vert G_W} : G_W \twoheadrightarrow G_w$ is a retraction). 

Let us refine the graph of groups $W$ by replacing the vertex $w$ by the Stallings splitting $\Lambda_w$ of $G_w$ relative to the stabilizers of edges incident to $w$. Let $\Lambda_W$ denote this new graph of groups (see Figure \ref{dessin4} below). 

\begin{figure}[!h]
\includegraphics[scale=0.7]{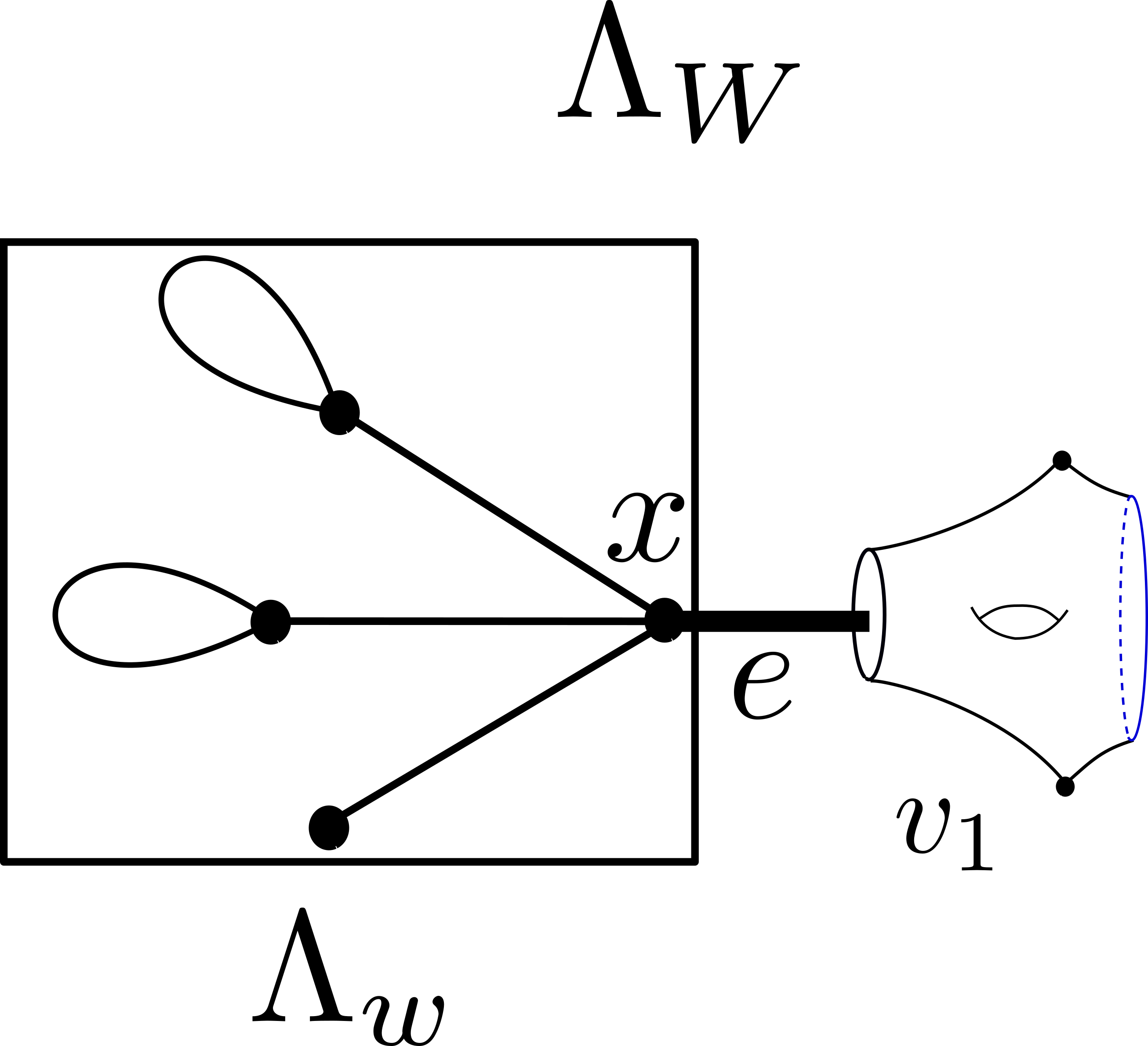}
\caption{$\Lambda_W$ is the splitting of $G_W$ obtained from $W$ by replacing $w$ by the splitting $\Lambda_w$ of $G_w$. The endomorphism $\phi$ of $G$ induces a retraction $\phi_{\vert G_W}$ from $G_W$ onto $G_w$.}
\label{dessin4}
\end{figure}

Assume towards a contradiction that $G_x=G_e$. Recall that the stabilizer $G_{v_k}$ of $v_k$ is an extension $F\hookrightarrow G_{v_k}\twoheadrightarrow \pi_1(\mathcal{O}_k)$ where $\mathcal{O}_k$ is a compact hyperbolic 2-orbifold, and $F$ is a finite group called the fiber. The image of $G_e$ in $\pi_1(\mathcal{O}_k)$ is an infinite cyclic group $\langle\gamma\rangle$. Let $\bar{\gamma}$ be a preimage of $\gamma$ in $G_{v_k}$. One has $G_x=G_e=F\rtimes\langle \bar{\gamma}\rangle$. We claim that $G_w$ surjects onto $\langle \bar{\gamma}\rangle$. For every edge $\varepsilon$ incident to $x$ in $\Lambda_w$, the stabilizer $G_{\varepsilon}$ is a subgroup of $F$. Let $V(\Lambda_w)$ denote the set of vertices of $\Lambda_w$, and let $N$ be the subgroup of $G_w$ normally generated by $\lbrace G_y \ \vert \ y\in V(\Lambda_w), \ y\neq x\rbrace\cup F$. The quotient group $G_w/N$ surjects onto $\langle\bar{\gamma}\rangle$, so $G_w$ surjects onto $\langle \bar{\gamma}\rangle$. Let $r :G_w\twoheadrightarrow \langle\bar{\gamma}\rangle$ denote this epimorphism. Let $n$ denote the genus of the orbifold $\mathcal{O}_k$, let $c_1,\ldots ,c_p$ denote the conical elements, and let $\gamma_1,\ldots,\gamma_q$ denote the generators of the boundary subgroups of $\mathcal{O}_k$ different from $\gamma$. 

In the case where the orbifold $\mathcal{O}_k$ is orientable, there exist $2n$ elements $a_1,b_1,\ldots,a_n,b_n$ in $\pi_1(\mathcal{O}_k)$ such that the following relation holds in $\pi_1(\mathcal{O}_k)$:\[\gamma=[a_1,b_1]\cdots [a_n,b_n]c_1\cdots c_p \gamma_1\cdots \gamma_q.\]For every element $\alpha$ in $\pi_1(\mathcal{O}_k)$, let us denote by $\bar{\alpha}$ a preimage of $\alpha$ in $G_{v_k}$. There exists an element $z\in F$ such that the following relation holds in $G_{v_k}$:\[\bar{\gamma}=[\bar{a}_1,\bar{b}_1]\cdots [\bar{a}_n,\bar{b}_n]\bar{c}_1\cdots \bar{c}_p \bar{\gamma}_1\cdots \bar{\gamma}_qz.\]One has $r\circ\phi(z)=1$ and $r\circ \phi(\bar{c}_i)=1$ for $1\leq i\leq p$, because $z$ and $\bar{c_i}$ have finite order in $G_{v_k}$, and $\langle\bar{\gamma}\rangle$ is torsion-free. Likewise, $r\circ \phi(\bar{\gamma}_i)=1$, for $1\leq i\leq q$, because $\bar{\gamma}_i$ is pinched by $\phi$. Moreover, since $\langle\bar{\gamma}\rangle$ is abelian, the image of the commutator $[\bar{a}_i,\bar{b}_i]$ by $r\circ \phi$ is trivial, for every $1\leq i\leq n$. As a consequence, the element $r\circ\phi(\bar{\gamma})$ is trivial. But $r\circ\phi(\bar{\gamma})=\bar{\gamma}$ since $r\circ\phi :  G_W\twoheadrightarrow G_x=G_e=\langle\bar{\gamma}\rangle$ is a retraction. This is a contradiction. 

In the case where the orbifold $\mathcal{O}_k$ is non-orientable, there exist $n$ elements $a_1,\ldots ,a_n$ such that the following relation holds in $\pi_1(\mathcal{O}_k)$: \[\gamma=a_1^2\cdots a_n^2 c_1\cdots c_p \gamma_1\cdots \gamma_q.\]There exists an element $z\in F$ such that the following relation holds in $G_{v_k}$:\[\bar{\gamma}=\bar{a}_1^2\cdots \bar{a}_n^2\bar{c}_1\cdots \bar{c}_p \bar{\gamma}_1\cdots \bar{\gamma}_qz.\]It follows that $r\circ\phi(\bar{\gamma})=(r\circ\phi(\bar{a}_1\cdots \bar{a}_n))^2=\bar{\gamma}$. Let $r' : \bar{\gamma}\twoheadrightarrow \mathbb{Z}/2\mathbb{Z}$ be the epimorphism obtained by killing the squares. Since $r\circ\phi(\bar{\gamma})$ is a square, one has $r'\circ r\circ \phi(\bar{\gamma})=1$. But $r'\circ r\circ \phi(\bar{\gamma})$ has order two, since $r\circ \phi(\bar{\gamma})=\bar{\gamma}$. This is a contradiction.\end{proof}

\color{black}

Recall that every virtually cyclic group $G$ with infinite center splits as $F\hookrightarrow G \twoheadrightarrow \mathbb{Z}$, with $F$ finite. In particular, the group $G$ cannot be generated by two finite subgroups, since its abelianization $G^{\mathrm{ab}}$ maps onto $\mathbb{Z}$ as well. Note also that any infinite subgroup of $G$ is virtually cyclic with infinite center.
The following result is adapted from \cite{Hor17}, Lemma 5.11.

\begin{lemme}\label{penible}Let $G$ be a virtually free group with a non-trivial minimal splitting $\Gamma$ over virtually cyclic groups with infinite center. There exists a vertex $x$ of $\Gamma$ such that $G_x$ splits non-trivially over a finite group relative to the stabilizers of edges incident to $x$ in $\Gamma$.\end{lemme}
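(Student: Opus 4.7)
The plan is to argue by contradiction: assume that for every vertex $v$ of $\Gamma$, the vertex group $G_v$ is one-ended over finite subgroups relative to $\mathcal{C}_v$, the collection of stabilizers of edges incident to $v$ in $\Gamma$.

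First, $G$ cannot be virtually cyclic: no virtually cyclic group admits a non-trivial minimal splitting over subgroups in $\mathcal{Z}$ with infinite center. Since $G$ is virtually free, infinite and not virtually cyclic, it splits non-trivially over a finite subgroup; fix such a splitting and let $T$ be its Bass-Serre tree. The strategy is to show that every $G_v$ is elliptic in $T$, and to deduce from this that $T$ must be trivial, contradicting the choice of a non-trivial splitting.

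Granting ellipticity of each $G_v$, one builds a $G$-equivariant map $\rho$ from the Bass-Serre tree $S$ of $\Gamma$ to $T$ by assigning to each vertex $v$ of $S$ a vertex $\rho(v) \in T$ fixed by $G_v$ (well-defined and $G$-equivariant after picking, in each $G$-orbit of vertices of $S$, a representative together with a $G_v$-fixed vertex of $T$). For any edge of $S$ joining $v$ to $w$, the segment $[\rho(v),\rho(w)]$ is pointwise fixed by the infinite edge group $G_e\subset G_v\cap G_w$; since $T$ has finite edge stabilizers, this segment must collapse, so $\rho(v)=\rho(w)$. By connectivity of $S$, the map $\rho$ is constant, and $G$-equivariance forces $G$ to fix its image; thus $T$ is trivial, contradicting our choice.

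The difficult step is to show that every $G_v$ is $T$-elliptic. If some $G_v$ were non-elliptic, its minimal invariant subtree $T_v\subset T$ would provide a non-trivial splitting of $G_v$ over finite subgroups (edge stabilizers of $T_v$ are inherited from $T$, hence finite); by the hypothesis on $G_v$, some incident edge group $G_e\in\mathcal{C}_v$ must then be non-elliptic on $T_v$, hence on $T$. Since $G_e\subset G_w$ for the other endpoint $w$ of $e$, the vertex group $G_w$ is non-elliptic too, and the phenomenon propagates through $\Gamma$. The main obstacle is to close this propagation into an actual contradiction: the approach, following the scheme of Lemma~5.11 in \cite{Hor17}, is to choose $T$ to be the Bass-Serre tree of a Stallings splitting of $G$ (so that $T/G$ is a finite graph of finite groups), and then to analyze how the axes in $T$ of the hyperbolic edge groups $G_e$ are forced to interact. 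The finiteness of $T/G$ together with accessibility of virtually free groups bounds the combinatorics of this configuration and yields the required contradiction.
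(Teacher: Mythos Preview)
Your setup and the first half are fine: if every $G_v$ were elliptic in a Stallings tree $T$, the equivariant map $S\to T$ you describe would indeed force $T$ to be trivial. The gap is entirely in the second half, where you try to derive a contradiction from some $G_v$ being non-elliptic. Your propagation argument does not close: from $G_v$ non-elliptic you correctly deduce that some incident edge group $G_e$ is hyperbolic in $T$, hence $G_w$ is non-elliptic for the other endpoint $w$ of $e$; but when you run the same argument at $w$, the hyperbolic incident edge group it produces may simply be $G_e$ again. Nothing forces the propagation to reach a new edge, so it may just bounce back and forth across $e$ indefinitely. The final sentence --- ``finiteness of $T/G$ together with accessibility \ldots\ bounds the combinatorics and yields the required contradiction'' --- is not an argument but a hope, and as written it does not do any work.

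It is also worth noting that the paper's proof runs in the \emph{opposite direction} from yours. Rather than mapping the $\Gamma$-tree into a Stallings tree (which would require exactly the ellipticity statement you could not establish), the paper maps a Stallings tree $T$ into the $\Gamma$-tree $T'$: this map exists for free, since vertex stabilizers of $T$ are finite and hence automatically elliptic in $T'$. The substance of the proof is then a careful analysis of a Stallings folding sequence $T=T_0\to T_1\to\cdots\to T'$, performed with a specific priority order on fold types and maintaining along the way a ``maximality property'' for edge stabilizers. One shows that if $T_n$ is the last tree in the sequence still containing an edge with finite stabilizer, then already $T_{n+1}=T'$, and reading off the fold $f_n:T_n\to T'$ exhibits the desired non-trivial relative splitting of some vertex group of $\Gamma$. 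This is genuinely different machinery from what you sketched, and the folding analysis is where all the work lies.
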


Before proving the lemma above, we need a definition. Let $T,T'$ be two simplicial $G$-trees. A map $f : T\rightarrow T'$ is a \emph{fold} if one of the two following situations occurs.

\begin{enumerate}
\item Either there exist two edges $e=[v,w]$ and $e'=[v,w']$ in $T$, incident to a common vertex $v$ in $T$ and belonging to different $G$-orbits, such that $T'$ is obtained from $T$ by $G$-equivariantly identifying $e$ and $e'$, and $f : T \rightarrow T'$ is the quotient map (see Figure below). The fold $f : T\rightarrow T'$ is determined by the orbit of the pair of edges $(e,e')$ identified by $f$.

\begin{center}
\begin{tikzpicture}[scale=1]
\node[draw,circle, inner sep=1.7pt, fill, label=below:{$w$}] (A1) at (2,0) {};
\node[draw,circle, inner sep=1.7pt, fill, label=below:{$w'$}] (A2) at (2,2) {};
\node[draw,circle, inner sep=1.7pt, fill, label=below:{$v$}] (A3) at (0,1) {};
\node[draw=none, label=below:{$e$}] (B1) at (1,0.5) {};
\node[draw=none, label=below:{$e'$}] (B2) at (1,2.2) {};
\node[draw=none, label=below:{$f(e)=f(e')$}] (B3) at (7,2) {};
\node[draw,circle, inner sep=1.7pt, fill, label=below:{$f(w)=f(w')$}] (A4) at (8,1) {};
\node[draw,circle, inner sep=1.7pt, fill, label=below:{$f(v)$}] (A5) at (6,1) {};

\draw[-,>=latex] (A3) to (A1) ;
\draw[-,>=latex] (A3) to (A2);
\draw[-,>=latex] (A4) to (A5);
\draw[->,>=latex, dashed] (3,1) to (5,1);
\end{tikzpicture}
\end{center}
There are two distinct subcases:
\begin{enumerate}
\item if $w$ and $w'$ belong to distinct $G$-orbits, then one has $G_{f(w)}=\langle G_w,G_{w'}\rangle$ and $G_{f(e)}=\langle G_e,G_{e'}\rangle$; 
\item if $w'=gw$ and $e'=ge$, then $G_{f(w)}=\langle G_w,g\rangle$ and $G_{f(e)}=\langle G_e,G_{e'}\rangle$.
\end{enumerate}
\item Or there exist an edge $e=[v,w]$ in $T$ and a subgroup $H$ of $G_v$, such that $T'$ is obtained from $T$ by $G$-equivariantly identifying $e$ and $he$ for every $h\in H$, and $f : T \rightarrow T'$ is the quotient map. We have $G_{f(w)}=\langle G_w,H\rangle$ and $G_{f(e)}=\langle G_e,H\rangle$. The fold $f : T\rightarrow T'$ is determined by the orbit of the pair $(e,H)$.

\begin{center}
\begin{tikzpicture}[scale=1]
\node[draw,circle, inner sep=1.7pt, fill, label=below:{$w$}] (A1) at (2,1) {};
\node[draw,circle, inner sep=1.7pt, fill] (A2) at (1.4141,1.4142+1) {};
\node[draw,circle, inner sep=1.7pt, fill] (A11) at (0,3) {};
\node[draw,circle, inner sep=1.7pt, fill] (A9) at (-1.4141,1.4142+1) {};
\node[draw,circle, inner sep=1.7pt, fill] (A10) at (-2,1) {};
\node[draw,circle, inner sep=1.7pt, fill] (A6) at (-1.4141,-1.4142+1) {};
\node[draw,circle, inner sep=1.7pt, fill] (A7) at (-2,1) {};
\node[draw,circle, inner sep=1.7pt, fill] (A8) at (-1.4141,-1.4142+1) {};
\node[draw,circle, inner sep=1.7pt, fill] (A12) at (0,-1) {};
\node[draw,circle, inner sep=1.7pt, fill] (A13) at (1.4141,-1.4142+1) {};
\node[draw,circle, inner sep=1.7pt, fill, label=below:{$v$}] (A3) at (0,1) {};
\node[draw=none, label=below:{$e$}] (B1) at (1,1.5) {};
\node[draw=none, label=below:{$he$}] (B2) at (0.7,2.6) {};
\node[draw=none, label=below:{$\langle G_e,H\rangle$}] (B3) at (7,2) {};
\node[draw,circle, inner sep=1.7pt, fill, label=below:{$\langle G_w,H\rangle$}] (A4) at (8,1) {};
\node[draw,circle, inner sep=1.7pt, fill, label=below:{$G_v$}] (A5) at (6,1) {};

\draw[-,>=latex] (A3) to (A1) ;
\draw[-,>=latex] (A3) to (A12) ;
\draw[-,>=latex] (A3) to (A13) ;
\draw[-,>=latex] (A3) to (A2);
\draw[-,>=latex] (A3) to (A9) ;
\draw[-,>=latex] (A3) to (A10) ;
\draw[-,>=latex] (A3) to (A11) ;
\draw[-,>=latex] (A3) to (A6) ;
\draw[-,>=latex] (A3) to (A7) ;
\draw[-,>=latex] (A3) to (A8) ;
\draw[-,>=latex] (A4) to (A5);
\draw[->,>=latex, dashed] (3,1) to (5,1);
\end{tikzpicture}
\end{center}
\end{enumerate}

\begin{rque}\label{...}Let $e=[v,w]$ and $e'=[v,w']$ be two edges of $T$ incident to a common vertex $v$. Let $f: T \rightarrow T'$ be a fold. Suppose that $G$ acts on the quotient $T'$ without inversion. If $e'=ge$, then $g$ fixes the vertex $v$. Hence, there exists a subgroup $H$ of $G_v$ containing $g$ such that $f$ is defined by the pair $(e,H)$.\end{rque}

\vspace{2mm}

\begin{proof2}
Let $T$ be a Stallings tree of $G$, and let $T'$ be the Bass-Serre tree of $\Gamma$. All vertex stabilizers of $T$ are finite, so elliptic in $T'$. As a consequence, there exists an equivariant map $f : T \rightarrow T'$ that sends each edge of $T$ to a point of $T'$ or to a path of edges in $T'$. Up to possibly collapsing some edges in $T$, and subdividing edges of $T$, one can suppose that $f$ maps each edge of $T$ to an edge of $T'$. 

We claim that there exists a finite sequence of $G$-trees $(T_k)_{0\leq k\leq n+1}$ with $n\geq 0$ and $T_0=T$, such that 
\begin{itemize}
\item[$\bullet$]$T_{k+1}$ is obtained from $T_k$ by a fold $f_k$,
\item[$\bullet$]$T_{n+1}$ is equivariantly isometric to $T'$ (we note $T_{n+1}=T'$), 
\item[$\bullet$]and the fold $f_n : T_n\rightarrow T'$ involves (at least) one edge with finite stabilizer. 
\end{itemize}

Before proving the claim, we explain how to derive the lemma from this claim. The fold $f_n$ identifies two distinct adjacent edges $e=[v,w]$ and $e'=[v,w']$ of $T_n$, with $G_e$ finite. We shall find a non-trivial splitting of the vertex group $G_{f_n(w)}$ over a finite group, relative to the stabilizers of edges incident to $f_n(w)$ in $T'$.

\begin{enumerate}
\item If $w$ and $w'$ are not in the same $G$-orbit, the stabilizer of $f_n(w)\in T'$ splits as $G_w\ast_{G_e\cap G_{e'}}G_{w'}=G_w\ast_{G_e}\langle G_{w'},G_e\rangle$. By the previous paragraph, this splitting is non-trivial. Moreover, the stabilizer $\langle G_e,G_{e'}\rangle$ of the edge $f_n(e)$ incident to $f_n(w)$ is contained in $\langle G_{w'},G_e\rangle$. In addition, if $\varepsilon$ is an edge of $T_n$ incident to $w$ or to $w'$, then the edge $f_n(\varepsilon)$ is incident to $f_n(w)$ in $T'$ and its stabilizer is contained in one a the two factors of $G_{f_n(w)}$.
\item If $w'=gw$ and $e'=ge$, then $G_{f_n(w)}$ splits as $\langle G_e,g\rangle\ast_{G_e} G_w$. This splitting is non-trivial and relative to the stabilizers of the edges incident to $f_n(w)$ in $T'$.
\item If $w'=gw$ but $e$ and $e'$ are not in the same $G$-orbit, then $G_{f_n(w)}$ splits as an HNN-extension $G_{w'}\ast_{G_e\cap G_{e'}}=\langle G_{w'},G_e\rangle\ast_{G_e}$. As above, this splitting is non-trivial and relative to the stabilizers of the edges incident to $f_n(w)$ in $T'$.
\end{enumerate}

In cases 1 and 2, it remains to prove that the splitting of $G_{f_n(w)}$ obtained above is non-trivial. It is enough to prove that $G_e$ is strictly contained in $G_w$. Assume towards a contradiction that $G_e=G_w$, and let us consider an edge $\varepsilon\neq e$ adjacent to $w$ in $T_n$. Since $G_w=G_e$ is finite, $G_{\varepsilon}$ is finite as well. But there are at most two orbits of edges with finite stabilizer in $T_n$ since $f_n$ is the last fold of the sequence, namely the orbit of $e$ and (possibly) the orbit of $e'$. If $\varepsilon=ge$, then either $g$ fixes $w$, so $g$ belongs to $G_w=G_e$, a contradiction; either $g$ is hyperbolic with translation length equal to 1, so $w=gv$. Thus, $G_v$ is finite. This is impossible because $G_v=G_{f_n(v)}$ is a vertex group of $T'$, and edge groups of $T'$ are infinite. If $\varepsilon=ge'$, then $g$ is hyperbolic with translation length equal to 2, so $w=gw'$. In particular, $G_{w'}$ and $G_{e'}$ are finite. By hypothesis, the edge group $G_{f_n(e)}=\langle G_e,G_{e'}\rangle$ is virtually cyclic with infinite center, so it maps onto $\mathbb{Z}$. But $G_e$ and $G_{e'}$ are finite, so $\langle G_e,G_{e'}\rangle$ has finite abelianization. This is a contradiction. Hence, $G_e$ is strictly contains in $G_w$. 

It remains to prove the claim. By \cite{Sta83}, we know that the map $f : T \rightarrow T'$ can be decomposed into a sequence of $G$-equivariant edge folds $(f_k : T_k\twoheadrightarrow T_{k+1})_{0\leq k\leq n}$, with $T_0=T$ and $T_{n+1}=T'$. But in general there is no reason why the last fold $f_n : T_n\rightarrow T'$ should involve an edge with finite stabilizer. We shall prove that it is always possible to find a sequence satisfying this condition, performing the folds in a certain order. Using the terminology of \cite{Hor17}, we say that a fold is 
\begin{itemize}
\item[$\bullet$]of type 1 if it identifies two edges $e$ and $e'$ belonging to distinct $G$-orbits, and both $e$ and $e'$ have infinite stabilizer, and 
\item[$\bullet$]of type 2 if it identifies two edges $e$ and $e'$ belonging to distinct $G$-orbits, and either $e$ or $e'$ (or both) has finite stabilizer, and
\item[$\bullet$]of type 3 if it identifies two edges belonging to the same $G$-orbit. 
\end{itemize}

Since the number of orbits of edges decreases when performing a fold of type 1 or 2, we can assume that along the folding sequence, we only perform a fold of type 2 if no fold of type 1 is possible, and we only perform a fold of type 3 if no fold of type 2 is possible. 

For every $k\in \llbracket 1,n\rrbracket$, let $\phi_k : T_k \rightarrow T'$ be the unique map such that $f=\phi_k\circ f_k\circ \cdots\circ f_0$. We shall construct by induction a sequence of folds with the following \emph{maximality property}: for every edge $e_k$ of $T_k$ with infinite stabilizer, the stabilizer of $\phi_k(e_k)\in T'$ is equal to $G_{e_k}$. In particular, if $f_k$ identifies two adjacent edges $e_k$ and $e'_k$ of $T_k$ with infinite stabilizers, then $e_k$ and $e'_k$ belong to distinct $G$-orbits and $G_{e_k}=G_{e'_k}$. 

The maximality property obviously holds for $T_0$, because all stabilizers of edges in $T_0$ are finite. Now, suppose that there exists a sequence $(T_i)_{0\leq i\leq k}$ satisfying the maximality property. If $T_k\neq T'$, there exists a fold $f_k : T_k \twoheadrightarrow T_{k+1}$. We can assume that $f_k$ is of type $t\in \lbrace 1,2,3\rbrace$ with $t$ as small as possible. Let $e_k$ and $e'_k$ be two adjacent edges of $T_k$ that are identified by $f_k$. Let $e=\phi_k(e_k)=\phi_k(e'_k)\in T'$. There are three distinct cases.
\begin{enumerate}
\item If $e_k$ or $e'_k$ has infinite stabilizer, then $G_{e_k}=G_{e}$ or $G_{e'_k}=G_{e}$ according to our induction assumption, so $G_{f_k(e_k)}=G_{e}$ in both cases. Hence, $T_{k+1}$ has the maximality property.
\item If $e_k$ and $e'_k$ have finite stabilizers and $G_{f_k(e_k)}$ is finite, then $T_{k+1}$ has the maximality property.
\item If $e_k$ and $e'_k$ have finite stabilizers and $G_{f_k(e_k)}$ is infinite, let $v_k$ be the common endpoint of $e_k$ and $e'_k$. Note that $e'_k=ge_k$ for some $g\in G_{v_k}$ of infinite order, otherwise the stabilizer of $f(e_k)$ would be equal to $\langle G_{e_k},G_{e'_k}\rangle$ or $\langle G_{e_k},h\rangle$ for some $h$ of finite order, but these groups are finite since $G_e$ is virtually cyclic with infinite center, and this is a contradiction. The subgroup $\langle g\rangle$ has finite index in $G_e$ and fixes the vertex $v_k$. It follows that $G_e$ is elliptic in $T_k$ as well. Let $x$ be the point the closest to $v_k$ that is fixed by $G_e$ in $T_k$. Assume towards a contradiction that $x\neq v_k$. Every edge in the segment $[x,v_k]\subset T_k$ has infinite stabilizer since $g$ fixes $[x,v_k]$. Moreover, the stabilizer of every edge $\varepsilon$ in $[x,v_k]$ is strictly contained in $G_e$, by definition of $x$. In addition, $\phi_k(\varepsilon)\neq e$ by the maximality property for $T_k$. Consequently, $f_k$ is non-injective on $[x,v_k]$. Thus, there exist two adjacent edges in $[x,v_k]$ with infinite stabilizers that are identified by $f_k$. It follows from the maximality property for $T_k$ that these two edges have the same infinite stabilizer, and belong to distinct $G$-orbits. Hence, we could perform a fold of type 1 in the tree $T_k$. This contradicts the priority order, since $f_k$ is not a fold of type 1 (because $G_{e_k}$ is finite). So we have proved that $G_e$ fixes $v_k$. As a consequence, we can replace $f_k: T_k\twoheadrightarrow T_{k+1}$ by the fold identifying $e_k$ with $he_k$ for every $h\in G_e$. This new tree $T_{k+1}$ has the maximality property.
\end{enumerate}

Now, let $(T_k)$ be a sequence of trees, with $T_0=T$, which respects the priority order (type 1 before type 2 before type 3) and the maximality property. Let $T_n$ be the last tree along the folding sequence that contains an edge with finite stabilizer. We claim that $T_{n+1}=T'$. Assume towards a contradiction that $T_{n+1}\neq T'$, and let us prove that we could perform a fold of type 1 in $T_n$, contradicting the order of priority in the sequence of folds.

Let us observe that $G$ acts without inversion on $T_{n+1}$. Indeed, $G$ acts without inversion on $T'$, and $T'$ is obtained from $T_{n+1}$ by a sequence of folds.

Since the fold $f_n$ involves an edge with finite stabilizer, it is not of type 1. If $f_n$ is of type 2, it is defined by a pair of adjacent edges $(e_n=[x,y],e'_n=[x,y'])$ in $T_n$. If $f_n$ is of type 3, it is defined by a pair $(e_n=[x,y], K)$ where $K$ is a subgroup of $G_x$ (see Remark \ref{...}). Up to exchanging $e_n$ and $e'_n$ (in the case where $f_n$ is of type 2), one can assume that the stabilizer of $e_n$ is finite.

All possible folds in $T_{n+1}$ identify two edges $e=[f_n(v),f_n(w)]$ and $e'=[f_n(v),f_n(w')]$ in distinct $G$-orbits such that $H:=G_e=G_{e'}$ is infinite (by the maximality property). Let $\varepsilon$ be an edge in the preimage of $e$ by $f_n$, and let $\varepsilon'$ be an edge in the preimage of $e'$ by $f_n$. 

First, let us prove that the group $H$ fixes an extremity of $\varepsilon$ (and similarly an extremity of $\varepsilon'$). If $G_{\varepsilon}$ is infinite, then the maximality property implies that $G_{\varepsilon}=G_e=H$. If $G_{\varepsilon}$ is finite, then one can assume without loss of generality that $\varepsilon=e_n$.
\begin{itemize}
\item[$\bullet$]If $f_n$ is of type 2, then $H=G_e=\langle G_{e_n},G_{e'_n}\rangle=G_{e'_n}$ by the maximality property. Since $x$ is an endpoint of the edge $e'_n$, the group $H=G_{e'_n}$ fixes $x$.
\item[$\bullet$]If $f_n$ is of type 3, then $H=G_e=\langle G_{e_n},K\rangle=K$ by the maximality property, and $K$ is a subgroup of $G_x$ by definition. Thus, $H$ fixes $x$.
\end{itemize}

Hence, the group $H$ fixes an extremity of both $\varepsilon$ and $\varepsilon'$, denoted respectively by $v$ and $v'$. If $\varepsilon$ and $\varepsilon'$ were disjoint, then $H$ would fix the segment between $v$ and $v'$. Up to replacing $v$ (resp. $v'$) by the other endpoint of $\varepsilon$ (resp. $\varepsilon'$) if necessary, one can suppose that $v$ and $v'$ are sent on the same point by $f_n$ (because $e=f_n(\varepsilon)$ and $e'=f_n(\varepsilon')$ are adjacent in $T_{n+1}$). As a consequence, there are two adjacent edges $a$ and $a'$ in the segment $[v,v']$ such that $f_n(a)=f_n(a')$. Since $H$ is infinite and fixes $a$ and $a'$, these two edges belong to distinct $G$-orbits, and $G_a=G_{a'}$, according to the maximality property. This is a contradiction, because $f_n$ is of type 2 or 3 since it involves an edge with finite stabilizer.

Therefore, the edges $\varepsilon$ and $\varepsilon'$ are adjacent in $T_n$. Moreover, they belong to distinct $G$-orbits since $e$ and $e'$ belong to distinct $G$-orbits. At least one of the edges $\varepsilon$ and $\varepsilon'$, say $\varepsilon$, has finite stabilizer, otherwise we could perform a fold of type 1 in $T_n$ identifying $\varepsilon$ and $\varepsilon'$, and this would contradict the order of priority in the folding sequence. 

\begin{center}
\begin{tikzpicture}[scale=1]
\node[draw,circle, inner sep=1.7pt, fill] (C1) at (-4,0) {};
\node[draw,circle, inner sep=1.7pt, fill] (C2) at (-4,2) {};
\node[draw,circle, inner sep=1.7pt, fill] (C3) at (-6,1) {};
\node[draw,circle, inner sep=1.7pt, fill] (A1) at (2,0) {};
\node[draw,circle, inner sep=1.7pt, fill] (A2) at (2,2) {};
\node[draw,circle, inner sep=1.7pt, fill] (A3) at (0,1) {};
\node[draw=none, label=below:{$\varepsilon$}] (D1) at (-5,0.5) {};
\node[draw=none, label=below:{$\varepsilon'$}] (D2) at (-5,2.2) {};
\node[draw=none, label=below:{$e$}] (B1) at (1,0.5) {};
\node[draw=none, label=below:{$e'$}] (B2) at (1,2.2) {};
\node[draw=none] (B3) at (7,2) {};
\node[draw=none, label=below:{$f_n$}] (Z4) at (-2,1) {};
\node[draw,circle, inner sep=1.7pt, fill] (A4) at (8,1) {};
\node[draw,circle, inner sep=1.7pt, fill] (A5) at (6,1) {};
\draw[->,>=latex, dashed] (-3,1) to (-1,1);
\draw[-,>=latex] (A3) to (A1) ;
\draw[-,>=latex] (A3) to (A2);
\draw[-,>=latex] (A4) to (A5);
\draw[-,>=latex] (C3) to (C1) ;
\draw[-,>=latex] (C3) to (C2);
\draw[->,>=latex, dashed] (3,1) to (5,1);
\end{tikzpicture}
\end{center}

Let us bserve in addition that it is possible to fold $\varepsilon$ and $\varepsilon'$ in $T_n$, and that this fold is of type 2 since $\varepsilon$ and $\varepsilon'$ are not in the same orbit. As a consequence, the fold $f_n$ is necessarily of type 2 (if it were of type 3, we should have folded $\varepsilon$ and $\varepsilon'$ before, according to the priority order). 

Since $G_{\varepsilon}$ is finite, one can assume without loss of generality that $\varepsilon=e_n$. Thus, one has $G_e=\langle G_{e_n},G_{e'_n}\rangle$. It follows from the maximality property that $G_{e'_n}=G_e=H$. The edges $\varepsilon=e_n$ and $\varepsilon'$ being adjacent in $T_n$, the edges $e'_n$ and $\varepsilon'$ are adjacent as well. Moreover, they are identified in $T'$, since $f_n(e'_n)=e$ and $f_n(\varepsilon')=e'$ are identified in $T_{n+1}$ (in particular in $T'$). In addition, note that $e'_n$ and $\varepsilon'$ lie in distinct $G$-orbits since $f_n(e'_n)=e$ and $f_n(\varepsilon')=e'$ lie in distinct $G$-orbits. Thus, we could have performed a fold of type 1 in $T_n$ by identifying $e'_n$ and $\varepsilon'$, a contradiction.\end{proof2}

We can now prove Lemma \ref{cyclic2}. Recall that this lemma claims that if $G$ is a virtually free group with a centered splitting $\Delta$, then $G$ has no non-degenerate $\Delta$-preretraction. Assume towards a contradiction that $G$ has a non-degenerate $\Delta$-preretraction. Then by Lemma \ref{deuxieme} above, $G$ has a subgroup $H$ with the following property: there exists a non-trivial minimal splitting $\Gamma$ of $H$ over virtually cyclic groups with infinite center such that, for every vertex $x$ of $\Gamma$, the vertex group $H_x$ does not split non-trivially over a finite group relative to the stabilizers of edges incident to $x$ in $\Gamma$. But $H$ is virtually free, as a subgroup of the virtually free group $G$, so Lemma \ref{penible} above tells us that there exists a vertex $x$ of $\Gamma$ such that $H_x$ splits non-trivially over a finite group relative to the stabilizers of edges incident to $x$ in $\Gamma$. This is a contradiction.

\color{black}

\section{A non-$\exists$-homogeneous virtually free group}\label{contre-exemple}

In this section, we give an example of a virtually free group which is not $\exists$-homogeneous. By the way, this example shows that the second step in the prove of the homogeneity of $\mathrm{SL}_2(\mathbb{Z})$ fails in general.

More precisely, we shall construct a virtually free group $G=A\ast_C B$, with $A,B$ finite, and two elements $x,y\in G$ such that:
\begin{itemize}
\item[$\bullet$]there exists a monomorphism $G\hookrightarrow G$ which interchanges $x$ and $y$ (in particular, $\mathrm{tp}_{\exists}(x)=\mathrm{tp}_{\exists}(y)$);
\item[$\bullet$]there is not any automorphism of $G$ which sends $x$ to $y$.
\end{itemize}

This is a new phenomenon, that does not occur in free groups, as shown by the following proposition (see \cite{OH11} Lemma 3.7).

\begin{prop}\label{libre}Let $x$ and $y$ be two elements of the free group $\mathbf{F}_n$. The two following statements are equivalent.
\begin{enumerate}
\item There exists a monomorphism $G\hookrightarrow G$ which sends $x$ to $y$, and a monomorphism $G\hookrightarrow G$ which maps $y$ to $x$.
\item There exists an automorphism of $G$ which maps $x$ to $y$.
\end{enumerate}
\end{prop}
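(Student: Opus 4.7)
The implication $(2) \Rightarrow (1)$ is immediate. For $(1) \Rightarrow (2)$, let $F_x$ and $F_y$ denote the smallest free factors of $G = \mathbf{F}_n$ containing $x$ and $y$ respectively; these are well-defined up to conjugacy and, being minimal, are one-ended relative to the cyclic subgroups $\langle x \rangle$ and $\langle y \rangle$ (since in a torsion-free group, not splitting freely relative to a subgroup is equivalent to being one-ended relative to it). The plan is to prove that $\phi$ restricts to an isomorphism from $F_x$ onto a conjugate of $F_y$, and then to extend this restriction to an automorphism of $G$ using the fact that any isomorphism between free factors of the same rank in a free group is the restriction of an ambient automorphism.

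The first step is to show that $\phi(F_x) \subset c F_y c^{-1}$ for some $c \in G$. I would consider the Bass-Serre tree $T$ of the splitting $G = F_y \ast F_y^c$: since $\phi(F_x) \cong F_x$ is freely indecomposable relative to $\phi(x) = y$, and $y$ is elliptic in $T$, the action of $\phi(F_x)$ on $T$ cannot induce a nontrivial free splitting with $y$ in a vertex group; hence $\phi(F_x)$ must fix a vertex of $T$. Because a nontrivial element in a free splitting fixes a unique vertex, this fixed vertex must coincide with the one fixed by $y$, giving the claim. The symmetric argument yields $\psi(F_y) \subset d F_x d^{-1}$ for some $d \in G$.

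The second step is to promote this inclusion to an equality. Consider $\psi \circ \phi : G \to G$; it fixes $x$ and maps $F_x$ into $g F_x g^{-1}$ with $g = \psi(c) d$. Since $x = \psi\phi(x) \in g F_x g^{-1}$, and since conjugating a nontrivial element of the free factor $F_x$ back into $F_x$ by an element of $G = F_x \ast F_x^c$ forces the conjugator to lie in $F_x$ itself (again by the fixed-point argument in the corresponding Bass-Serre tree), we get $g \in F_x$; hence $\psi\phi|_{F_x}$ is a monomorphism of $F_x$ fixing $x$. By Theorem \ref{coHopf} applied to $F_x$ (hyperbolic and one-ended relative to $\langle x \rangle$), this monomorphism is an automorphism. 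Symmetrically, $\phi\psi|_{F_y}$ is an automorphism of $F_y$. A short diagram chase then shows $\phi|_{F_x} : F_x \to c F_y c^{-1}$ is an isomorphism: surjectivity of $\phi\psi|_{F_y}$ onto $F_y$ gives $F_y \subset \phi(d)\phi(F_x)\phi(d)^{-1}$, and combining this with $\phi(F_x) \subset c F_y c^{-1}$ together with the fixed-point argument (which forces $\phi(d) c \in F_y$) yields $\phi(F_x) = c F_y c^{-1}$.

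For the final step, the isomorphism $\phi|_{F_x} : F_x \xrightarrow{\sim} c F_y c^{-1}$ implies $\mathrm{rank}(F_x) = \mathrm{rank}(F_y)$, so the complementary free factors $F_x^c$ and $c F_y^c c^{-1}$ are both free of rank $n - \mathrm{rank}(F_x)$. Choosing any isomorphism between these complements and amalgamating it with $\phi|_{F_x}$ via the universal property of free products gives a surjective endomorphism of $G$, hence an automorphism $\Phi$ with $\Phi(x) = \phi(x) = y$. The main technical input is the Bass-Serre tree argument of Step 1 (used twice: once in each of the splittings $G = F_y \ast F_y^c$ and $G = F_x \ast F_x^c$); once this is established, the rest is a diagram chase combined with the relative co-Hopf property (Theorem \ref{coHopf}) and the observation that isomorphisms between free factors of $G$ extend to ambient automorphisms.
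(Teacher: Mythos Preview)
Your proof is correct and follows essentially the same route as the paper's: take the minimal free factors $F_x$, $F_y$ containing $x$, $y$; show $\phi(F_x)\subset F_y$ and $\psi(F_y)\subset F_x$; apply the relative co-Hopf property (Theorem \ref{coHopf}) to $\psi\phi|_{F_x}$; then extend via an isomorphism of the complementary free factors. The paper compresses your Step 1 into the phrase ``one easily sees that $\phi(H_x)\subset H_y$'', while you spell out the Bass-Serre tree argument explicitly. One simplification: in that argument the conjugator $c$ can in fact be taken trivial, since $y\in\phi(F_x)$ already fixes the base vertex of the tree for $G=F_y\ast F_y'$ and a nontrivial element fixes a unique vertex there; this would let you drop the conjugacy bookkeeping in Step 2. (Also, using $F_y^c$ for the complementary factor while $c$ is simultaneously an element of $G$ is notationally awkward.)
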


Here is a proof of Proposition \ref{libre} above.

\begin{proof}
Let $H_x<\mathbf{F}_n$ be a free factor relative to $x$ and let $H_y<\mathbf{F}_n$ be a free factor relative to $y$. Let $\phi : \mathbf{F}_n \hookrightarrow \mathbf{F}_n$ be a monomorphism $G\hookrightarrow G$ which sends $x$ to $y$, and let $\psi : \mathbf{F}_n \hookrightarrow \mathbf{F}_n$ a monomorphism $G\hookrightarrow G$ which sends $y$ to $x$. One easily sees that $\phi(H_x)\subset H_y$ and $\psi(H_y)\subset H_x$. The monomorphism $(\psi \circ \phi)_{\vert H_x}$ sends $H_x$ into itself and it fixes $x$. As a consequence, it is an automorphism of $H_x$ thanks to the relative co-Hopf property \ref{coHopf}. Hence, $\phi$ induces an isomorphism from $H_x$ to $H_y$. It remains to extend $\phi$ to an automorphism of $\mathbf{F}_n$. Write $\mathbf{F}_n=H_x\ast K_x=H_y\ast K_y$. The groups $K_x$ and $K_y$ are free and have the same rank, so there is an isomorphism $\alpha : K_x\rightarrow K_y$. One define an automorphism $\psi$ of $\mathbf{F}_n$ that sends $x$ to $y$ by $\psi_{\vert H_x}=\phi_{\vert H_x}$ and $\psi_{\vert K_x}=\alpha_{\vert K_x}$.\end{proof}

\begin{rque}It follows from the previous proposition that the free group $\mathbf{F}_2=\langle a,b\rangle$ is $\exists$-homogeneous. Indeed, if $x$ and $y$ have the same existential type, one can express by means of an existential sentence that there exists an endomorphism $\phi$ of $F_2$ that sends $x$ to $y$ and that does not kill the commutator $[a,b]$. The subgroup $\phi(\mathbf{F}_2)<\mathbf{F}_2$ is generated by the two elements $\phi(a)$ and $\phi(b)$ that do not commute, so $\phi(\mathbf{F}_2)$ is isomorphic to $\mathbf{F}_2$ and $\phi$ is injective. Likewise, there exists a monomorphism $\psi : \mathbf{F}_2\hookrightarrow \mathbf{F}_2$ that sends $y$ to $x$. Now, the previous proposition implies that there exists an automorphism of $\mathbf{F}_2$ that maps $x$ to $y$.\end{rque}

The following proposition shows that the elements $x$ and $y$ must have finite order in the counterexample.

\begin{prop}\label{111}Let $G=A\ast_C B$, where $A$ and $B$ are finite. Suppose that $A$ has no subgroup isomorphic to $B$, and that $B$ has no subgroup isomorphic to $A$. Let $x$ and $y$ be two elements of $G$ which have the same $\exists$-type. If $x$ has infinite order, then there exists an automorphism of $G$ that maps $x$ to $y$.\end{prop}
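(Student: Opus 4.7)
My plan follows the template of the $\mathrm{SL}_2(\mathbb{Z})$ argument in Section~\ref{motivation} for the infinite-order case. The strategy is to produce monomorphisms $\phi,\psi\colon G\hookrightarrow G$ with $\phi(x)=y$ and $\psi(y)=x$; since $x$ has infinite order and every Stallings tree of $G$ has finite vertex stabilizers, $x$ is hyperbolic in each such tree and $G$ is one-ended relative to $\langle x\rangle$, so the relative co-Hopf property (Theorem~\ref{coHopf}) upgrades $\psi\circ\phi$, which fixes $x$, to an automorphism, forcing $\phi$ itself to be an automorphism mapping $x$ to $y$.

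To produce $\phi$, fix a presentation $G=\langle a_1,\ldots,a_n,b_1,\ldots,b_m\mid\Sigma\rangle$ in which the $a_i$ generate $A$ and the $b_j$ generate $B$, write $x=w_x(a,b)$ as a word in these generators, and let $M=\mathrm{lcm}(|A|,|B|)$, an upper bound on the order of torsion elements of $G$. Consider the existential formula
\[
\theta_x(z):\ \exists g,h\,\Big[\,\Sigma\bigl(gag^{-1},\,hbh^{-1}\bigr)=1\,\wedge\,w_x\bigl(gag^{-1},\,hbh^{-1}\bigr)=z\,\wedge\,\bigwedge_{k=1}^{M}z^k\neq 1\,\Big],
\]
where $gag^{-1}$ is shorthand for the tuple $(ga_1g^{-1},\ldots,ga_ng^{-1})$ and similarly for $hbh^{-1}$. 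It is satisfied by $z=x$ (take $g=h=1$), hence by $z=y$ by $\exists$-type equality. The witnesses $g_\phi,h_\phi$ define an endomorphism $\phi$ whose restriction to $A$ is conjugation by $g_\phi$, whose restriction to $B$ is conjugation by $h_\phi$, which sends $x$ to $y$, and which makes $y$ of infinite order (the satisfaction of $\Sigma$ forces $h_\phi^{-1}g_\phi\in C_G(C)$, ensuring the restrictions agree on $C$). Writing $y=w_y(a,b)$ and applying $\exists$-type equality to the analogous formula $\theta_y$, I obtain symmetrically an endomorphism $\psi$ with $\psi(y)=x$ of the same form.

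The main technical step is the injectivity of $\phi$ (and likewise $\psi$). The mutual non-embedding hypothesis forces $\phi(A)=A^{g_\phi}$ and $\phi(B)=B^{h_\phi}$: indeed $\phi(A)\cong A$ is a finite subgroup of $G=A*_C B$ and hence conjugate (by Bass-Serre theory applied to the Stallings tree) into $A$ or into $B$, but the second possibility would contradict the hypothesis that $B$ contains no subgroup isomorphic to $A$. An analysis in the Stallings tree then identifies $\phi(C)=A^{g_\phi}\cap B^{h_\phi}$ with a conjugate $C^k$ of $C$ (the intersection is contained in the stabilizer of the geodesic between the two fixed vertices, hence in an edge stabilizer, and the cardinalities match). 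Following the injectivity argument in the $\mathrm{SL}_2(\mathbb{Z})$ case of Section~\ref{motivation}, one then identifies $\phi$ with the composition $A*_C B\xrightarrow{\sim}A^{g_\phi}*_{\phi(C)}B^{h_\phi}\hookrightarrow G$, whose second arrow is the natural inclusion of the internal amalgamated product, and concludes that $\phi$ is injective.

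Finally, $\psi\circ\phi\colon G\hookrightarrow G$ is a monomorphism fixing $x$, so by Theorem~\ref{coHopf} it is an automorphism; consequently $\phi$ is surjective and, being also injective, is an automorphism of $G$ mapping $x$ to $y$. The main obstacle in executing the plan is the injectivity step: verifying that the subgroup $\langle A^{g_\phi},B^{h_\phi}\rangle\le G$ is the natural internal amalgamated product requires a Bass-Serre analysis of the position of the vertices fixed by $A^{g_\phi}$ and $B^{h_\phi}$ in the Stallings tree, and uses the mutual non-embedding hypothesis essentially.
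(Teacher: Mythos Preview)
Your overall plan matches the paper's proof: construct monomorphisms $\phi,\psi$ with $\phi(x)=y$ and $\psi(y)=x$, observe that $G$ is one-ended relative to the infinite-order element $x$, and apply Theorem~\ref{coHopf} to $\psi\circ\phi$.

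The gap is in your existential formula $\theta_x(z)$: it contains the fixed elements $a_1,\ldots,a_n,b_1,\ldots,b_m\in G$ as \emph{parameters}, so it is not a formula in the single free variable $z$, and $\mathrm{tp}_\exists(x)=\mathrm{tp}_\exists(y)$ does not let you pass from $G\models\theta_x(x)$ to $G\models\theta_x(y)$. The correct formula, as in Section~\ref{motivation}, existentially quantifies over variables $\alpha_1,\ldots,\alpha_n,\beta_1,\ldots,\beta_m$ standing for the generators, asserts $\Sigma(\alpha,\beta)=1$ and $w_x(\alpha,\beta)=z$, and includes the inequations $w_a(\alpha)\neq 1$ for each $1\neq a\in A$ (and likewise for $B$) so that the resulting endomorphism $\phi$ is injective on $A$ and on $B$. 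Only then do the mutual non-embedding hypothesis and your Bass--Serre argument come into play, to show that $\phi(A)$ is a conjugate of $A$ rather than merely conjugate into $A$ or $B$. (With your formula as written, $\phi|_A=\mathrm{ad}(g_\phi)$ by construction, so that whole paragraph would be superfluous---another symptom that the formula is not the right one.) A smaller point: hyperbolicity of $x$ in every Stallings tree does not by itself imply one-endedness of $G$ relative to $\langle x\rangle$, since the relevant splittings $U\ast_W V$ may have $U$ infinite; you need an argument closer to the lemma in Section~\ref{motivation}.
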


\begin{proof}As in the proof of the homogeneity of $\mathrm{SL}_2(\mathbb{Z})$, one can prove that there exist a monomorphism $\phi$ of $G$ that maps $x$ to $y$, and a monomorphism $\psi$ of $G$ that maps $y$ to $x$. Moreover, one can prove that $G$ is one-ended relative to $x$. Since $x$ has infinite order, the element $y$ has infinite order as well, so $G$ is one-ended relative to $y$. Theorem \ref{coHopf} tells us that $G$ is co-hopfian relative to $x$. Therefore, the monomorphism $\psi\circ \phi$ is an automorphism of $G$, since it fixes $x$. As a consequence, $\phi$ and $\psi$ are two automorphisms.\end{proof}

\subsection{Definition of the group we seek}

In Section \ref{motivation}, we proved that the group $\mathrm{SL}_2(\mathbb{Z})$ is $\exists$-homogeneous. Similarly, one can prove the following proposition that give sufficient conditions under which a group of the form $G=A\ast_C B$, with $A,B$ finite and $C\lhd G$, is $\exists$-homogeneous.

\begin{prop}\label{222}Let $G=A\ast_C B$, where $A$ and $B$ are finite, and $C\lhd G$. Suppose that $A$ has no subgroup isomorphic to $B$, and that $B$ has no subgroup isomorphic to $A$. Assume moreover that, for every $g\in G$, there are two elements $a\in A$ and $b\in B$ such that $\mathrm{ad}(g)_{\vert C}=\mathrm{ad}(ba)_{\vert C}$. Then the group $G$ is $\exists$-homogeneous.\end{prop}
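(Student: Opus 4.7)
\begin{proof1}
The plan is to mirror the two-step strategy from the $\mathrm{SL}_2(\mathbb Z)$ proof in Section \ref{motivation}, with the hypothesis on $\mathrm{ad}(g)|_C$ playing the role that the centrality of $C$ plays there. Fix finite presentations of $A$ and $B$ so that $G$ is finitely presented on generators $a_1,\ldots,a_p,b_1,\ldots,b_q$, and write each component of $u$ as a word $w_i(\mathbf a,\mathbf b)$.

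First I would produce a monomorphism $\phi:G\hookrightarrow G$ with $\phi(u)=v$ by invoking an $\exists$-sentence $\theta(\mathbf z)$ asserting the existence of $\mathbf x,\mathbf y$ satisfying the defining relations of $G$, generating subgroups of orders exactly $|A|$ and $|B|$ (a finite conjunction of inequations, since $A$ and $B$ are finite), and such that $z_i=w_i(\mathbf x,\mathbf y)$. Since $u$ witnesses $\theta$ and $\mathrm{tp}_{\exists}(u)=\mathrm{tp}_{\exists}(v)$, so does $v$, yielding an endomorphism $\phi$ of $G$ with $\phi(u)=v$ that is injective on both $A$ and $B$. Finite subgroups of $A\ast_C B$ are conjugate into $A$ or $B$, and the non-embedding hypotheses prevent $\phi(A)$ and $\phi(B)$ from landing in the wrong factor; hence $\phi(A)$ is a conjugate of $A$ and $\phi(B)$ a conjugate of $B$, and after post-composing $\phi$ with an inner automorphism, $\phi(A)=A$ and $\phi(B)=B^h$ for some $h\in G$. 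Normality of $C$ forces $C$ to be the stabilizer of every edge of the Bass--Serre tree of $A\ast_C B$, and the same tree argument as in the $\mathrm{SL}_2(\mathbb Z)$ case then gives $\phi(C)=C$.

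Next I would modify $\phi$ into an automorphism. Applying the hypothesis with $g=h^{-1}$ produces $a_0\in A$ and $b_0\in B$ with $\mathrm{ad}(h^{-1})|_C=\mathrm{ad}(b_0 a_0)|_C$, equivalently $b_0 a_0 h\in C_G(C)$. I would then set $\alpha|_A:=\mathrm{ad}(a_0)\circ\phi|_A$ and $\alpha|_B:=\mathrm{ad}(b_0^{-1}h^{-1})\circ\phi|_B$; these are automorphisms of $A$ and $B$ respectively, since $a_0\in A$ normalizes $A$ and $b_0^{-1}h^{-1}$ conjugates $B^h$ back to $B$. The compatibility on $C$ reduces to $h b_0 a_0\in C_G(C)$, which follows from $b_0 a_0 h\in C_G(C)$ by normality of $C_G(C)$ in $G$ (itself a consequence of $C\lhd G$). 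Hence $\alpha$ glues to a well-defined endomorphism of $G=A\ast_C B$ whose image contains $A=\alpha(A)$ and $B=\alpha(B)$; so $\alpha$ is surjective, and an automorphism by Hopfianity of the virtually free group $G$.

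The main obstacle will be arranging that the automorphism actually sends $u$ to $v$, since $\alpha$ differs from $\phi$ by two unrelated inner automorphisms on $A$ and $B$, so $\alpha(u)$ need not equal $v$. I will follow the dichotomy used for $\mathrm{SL}_2(\mathbb Z)$. If $G$ is one-ended relative to $\langle u\rangle$, the symmetric application of Step~1 yields a monomorphism $\psi:G\to G$ with $\psi(v)=u$, so that $\psi\circ\phi$ is a self-monomorphism of $G$ fixing $\langle u\rangle$; Theorem~\ref{coHopf} promotes it to an automorphism, and Hopfianity then forces both $\phi$ and $\psi$ to be automorphisms, finishing the argument. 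Otherwise $\langle u\rangle$ has finite order and is conjugate into $A\cup B$, so $u$ and $v$ are both torsion; the explicit formulas for $\alpha$ then show that $\alpha(u)$ is a single inner-automorphism translate $v^\gamma$ of $v$, and $\mathrm{ad}(\gamma^{-1})\circ\alpha$ is the desired automorphism of $G$ sending $u$ to $v$.
\end{proof1}
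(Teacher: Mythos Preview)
Your proof is correct and follows essentially the same two-step strategy as the paper: produce a monomorphism via an $\exists$-sentence, then split into the one-ended case (handled by relative co-Hopfianity, as in Proposition~\ref{111}) and the finite case (where the modified endomorphism $\alpha$ is promoted to an automorphism via Hopfianity). The only cosmetic differences are your choice of normalization ($\phi(A)=A$ rather than the paper's $\phi(B)=B$) and a harmless slip in writing $b_0a_0h\in C_G(C)$ where the direct computation gives $hb_0a_0\in C_G(C)$; your invocation of normality of $C_G(C)$ covers either version.
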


For instance, if $C$ is cyclic, or more generally if $\mathrm{Out}(C)$ is abelian, the group $G$ is $\exists$-homogeneous. 

\begin{proof}Let $x$ and $y$ be two elements of $G$ which have the same $\exists$-type. If $x$ has infinite order, then it follows from Proposition \ref{111} above that there exists an automorphism of $G$ that maps $x$ to $y$. Now, suppose that $x$ has finite order. As in the proof of the homogeneity of $\mathrm{SL}_2(\mathbb{Z})$, one can prove that there exist a monomorphism $\phi$ of $G$ that maps $x$ to $y$ and a monomorphism $\psi$ of $G$ that maps $y$ to $x$. We claim that it is possible to modify $\phi$ in such a way as to obtain an automorphism of $G$ which sends $x$ to $y$. First, observe that there exist two elements $g$ and $h$ of $G$ such that $\phi(A)=A^g$ and $\phi(B)=B^h$. Up to composing $\phi$ by $\mathrm{ad}(h^{-1})$, one may suppose that the element $h$ is trivial. By assumption, there are two elements $a\in A$ and $b\in B$ such that $\mathrm{ad}(bag^{-1})$ coincides with the identity map on $C=\phi(C)$. Thus, one can define an endomorphism $\alpha$ of $G$ by setting $\alpha_{\vert A}=\mathrm{ad}(bag^{-1})\circ \phi$ and $\alpha_{\vert B}=\phi$. This homomorphism is surjective since its image contains $A$ and $B$ (indeed, $\alpha(A)=A^b$ and $\alpha(B)=B$). Hence, $\alpha$ is an automorphism of $G$. Moreover, since $y=\phi(x)$ has finite order, $\alpha$ sends $y$ to a conjugate $y^{\gamma}$ of $y$. Thus, the automorphism $\mathrm{ad}(\gamma^{-1})\circ \alpha$ maps $x$ to $y$.\end{proof}

In order to construct a non-$\exists$-homogeneous group $G$ of the form $G=A\ast_C B$, where $A$ and $B$ are finite, it is necessary to violate the conditions given by Proposition \ref{222} above. Note in addition that the elements $x$ and $y$ we are looking for need have finite order, by Proposition \ref{111}. We are now ready to define the group $G$ together with the elements $x$ and $y$.

Set $C=(\mathbb{Z}/2\mathbb{Z})^4$ and $e_1=(1,0,0,0)$, $e_2=(0,1,0,0)$, $e_3=(0,0,1,0)$, $e_4=(0,0,0,1)$. Let $(e_i \ e_j)$ denote the element of $\mathrm{Aut}(C)=\mathrm{GL}_4(\mathbb{F}_2)$ that interchanges $e_i$ and $e_j$ while leaving fixed $e_k$ for $k\notin \lbrace i,j\rbrace$. Let us define a homomorphism $f$ from $(\mathbb{Z}/2\mathbb{Z})^2=\langle x\rangle\times \langle y\rangle$ to $\mathrm{Aut}(C)$ by setting $f(x)=(e_1 \ e_2)$ and $f(y)=(e_3 \ e_4)$. Let $A=C\rtimes_f (\langle x\rangle\times \langle y\rangle)$. Then, let us define a homomorphism $h$ from $\mathbb{Z}/3\mathbb{Z}=\langle z\rangle$ to $\mathrm{Aut}(C)$ by setting $h(z)= (e_1 \ e_2 \ e_3)$, and let $B=C\rtimes_h \langle z\rangle$. Finally, let $G=A\ast_CB$.

\begin{comment}

\[f(x)=\begin{pmatrix}
0 & 1 & 0 & 0 \\
1 & 0 & 0 & 0 \\
0 & 0 & 1 & 0 \\
0 & 0 & 0 & 1 
\end{pmatrix} \ \ \ \text{and} \ \ \ f(y)=\begin{pmatrix}
1 & 0 & 0 & 0 \\
0 & 1 & 0 & 0 \\
0 & 0 & 0 & 1 \\
0 & 0 & 1 & 0 
\end{pmatrix},\] et on pose $A=C\rtimes_f (\langle x\rangle\times \langle y\rangle)$. On définit ensuite un morphisme $h$ de $\mathbb{Z}/3\mathbb{Z}=\langle z\rangle$ vers $\mathrm{Aut}(C)$ par \[h(z)=\begin{pmatrix}
0 & 0 & 1 & 0 \\
1 & 0 & 0 & 0 \\
0 & 1 & 0 & 0 \\
0 & 0 & 0 & 1 
\end{pmatrix},\]
et on pose $B=C\rtimes_h \langle z\rangle$. Enfin, on pose $G=A\ast_CB$.

$\psi(e_1)=e_3$, 

$\psi(e_2)=e_4$, 

$\psi(e_3)=e_1$, 

$\psi(e_4)=e_2$.
\end{comment}

\subsection{A monomorphism that interchanges $x$ and $y$} First, let us consider the following automorphism $\psi$ of $A$:
\begin{center}
$\psi(x)=y$, 

$\psi(y)=x$, 

$\psi_{\vert C}=(e_1 \ e_3)(e_2 \ e_4 )$.
\end{center}
Let $u=z^{-1}xyz\in G$. One easily checks that $\psi$ and $\mathrm{ad}(u)$ coincide on $C$, so one can define an endomorphism $\phi$ of $G$ by setting $\phi_{\vert B}=\mathrm{ad}(u)$ and $\phi_{\vert A}=\psi$. Note that $\phi(C)=C$ since $\psi$ preserves $C$. Since $\phi$ induces an automorphism $\psi$ of $A$, and restricts to a conjugation on $B$, it sends any reduced normal form to a reduced normal form. It follows that $\phi$ is injective.

\subsection{There does not exist any automorphism which sends $x$ to $y$.}

Observe that the subgroup $\langle x,z\rangle<G$ generated by $x$ and $z$ fixes $e_4$. This subgroup acts by conjugation on $C$ as the permutation group $S_3$, whereas the subgroup $\langle y,z\rangle<G$ acts on $C$ as $S_4$. Assume towards a contradiction that there exists an automorphism $\sigma$ of $G$ sending $x$ to $y$. Since the deformation space of the splitting $A\ast_C B$ is reduced to a point and invariant under the automorphisms of $G$, the Bass-Serre tree $T$ is invariant under automorphisms as well. Hence, there exists a $\sigma$-equivariant isometry $f : T \rightarrow T$. Up to composing $\sigma$ by an inner automorphism, since $G$ acts transitively on the set of edges of $T$, one can assume that $f$ preserves the edge $[v,w]$ such that $G_v=A$ and $G_w=B$. Thus, $\sigma(A)=A$, $\sigma(B)=B$ and $\sigma(C)=C$. Consequently, $\sigma(z)=z^{\pm 1}c$ for some $c\in C$. It follows that the action of $\langle\sigma(z),y\rangle$ on $C$ by conjugation is the same as the action of $\langle z,y\rangle$, which acts as the permutation group $S_4$. But $\sigma(\langle z,x\rangle)=\langle\sigma(z),\sigma(x)\rangle=\langle\sigma(z),y\rangle$ acts as $S_3$ on $C$. This is a contradiction.

\section{Uniform almost-homogeneity in virtually free groups}\label{presque-homogénéité}

In the current section, we prove our main result.

\begin{te}Virtually free groups are uniformly almost-homogeneous.\end{te}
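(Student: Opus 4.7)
The plan is to combine Corollary~\ref{corol}---which, for any two tuples $u,v\in G^k$ of the same ($\forall\exists$-)type, produces an endomorphism of $G$ sending $u$ to $v$ and restricting to an isomorphism between the maximal relative one-ended subgroups $U$ and $V$---with the cocompactness of the Stallings deformation space (Proposition~\ref{compacité}).

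First I associate to each $v$ with $\mathrm{tp}_{\forall\exists}(v)=\mathrm{tp}_{\forall\exists}(u)$ a non-redundant Stallings tree $T_v\in\mathcal{D}(G)$. Let $\Delta_v$ be the splitting of $G$ over finite subgroups that is maximal relative to $\langle v\rangle$, with $V$ at the distinguished vertex; since $G$ is virtually free and the vertex groups of $\Delta_v$ other than $V$ do not contain $\langle v\rangle$, they are one-ended in the absolute sense, hence finite. Fix once and for all a non-redundant Stallings tree $S_U$ of $U$, transport it via the isomorphism $U\to V$ of Corollary~\ref{corol} to a Stallings splitting $S_V$ of $V$, refine $\Delta_v$ at its $V$-vertex by $S_V$, and collapse any redundant edges. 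The result $T_v$ is a non-redundant Stallings tree of $G$ in which $v$ lies inside a distinguished subgroup conjugate to $V$, and $v=\iota(u)$ for a specific isomorphism $\iota:U\to V$ compatible with the two Stallings structures.

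Next I invoke Proposition~\ref{compacité}: there is a finite list $T_1,\dots,T_n$ of non-redundant trees in $\mathcal{D}(G)$, depending only on $G$, such that $T_v=T_i^{\sigma_v}$ for some $i\in\{1,\dots,n\}$ and some $\sigma_v\in\mathrm{Aut}(G)$. Replacing $v$ by $\sigma_v^{-1}(v)$ does not change its $\mathrm{Aut}(G)$-orbit, so we may normalize $T_v=T_i$. In this normalized picture, $v$ is concentrated in a fixed subgroup $W_i\leq G$---the stabilizer of the distinguished subgraph of $T_i$---and $v=\iota(u)$ for an isomorphism $\iota:U\to W_i$ sending $S_U$ to the Stallings splitting of $W_i$ induced by $T_i$.

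The final step is to bound, for each $i\in\{1,\dots,n\}$, the number of $\mathrm{Aut}(G)$-orbits among normalized tuples with $T_v=T_i$ by a constant $N_i=N_i(G)$. The stabilizer of $T_i$ in $\mathrm{Aut}(G)$ acts on $W_i$, and its image in $\mathrm{Aut}(W_i)$ contains the graph-of-groups automorphisms of the induced Stallings structure on $W_i$; combined with Theorem~\ref{coHopf} (relative co-Hopfness of $U$ with respect to $\langle u\rangle$, which implies that any two isomorphisms $U\to W_i$ sending $u$ to the same tuple differ by an automorphism of $U$ fixing $u$), the number of orbits is bounded by a quantity depending only on $G$---applying Proposition~\ref{compacité} a second time, now inside $W_i$, to control the index of the realized image in $\mathrm{Aut}(W_i)$. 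Setting $N=N_1+\dots+N_n$ yields a uniform bound independent of $u$ and $k$. The main technical obstacle is precisely this last bookkeeping: controlling the image of $\mathrm{Stab}_{\mathrm{Aut}(G)}(T_i)$ in $\mathrm{Aut}(W_i)$ modulo the stabilizer of $\iota(u)$, where the cocompactness of the Stallings deformation space of $W_i$ must be leveraged uniformly.
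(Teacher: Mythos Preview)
Your overall strategy matches the paper's: feed tuples of the same type through Corollary~\ref{corol} to obtain isomorphisms between the relative one-ended factors, refine to non-redundant Stallings trees, and invoke the cocompactness of $\mathcal{D}(G)$ (Proposition~\ref{compacité}) to reduce to finitely many configurations. Two points, however, do not go through as written.

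First, a smaller one: after normalizing so that $T_v=T_i$, you assert that $v$ lands in a \emph{fixed} subgroup $W_i$. But the tree $T_i$ alone does not single out a subgraph; what you constructed was a tree $T_v$ together with a distinguished subtree (the copy of $S_V$), and the automorphism $\sigma_v$ identifying $T_v$ with $T_i$ carries that subtree to \emph{some} subtree of $T_i$, not a canonical one. The paper handles this by also tracking the collapse map: since a given tree admits only finitely many $G$-equivariant collapses, one extracts a further subsequence on which both the tree and the collapse coincide, and only then is the distinguished vertex group genuinely fixed.

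The substantive gap is the final step. You propose to bound the number of $\mathrm{Aut}(G)$-orbits among the normalized $v$'s by controlling the index of the image of $\mathrm{Stab}_{\mathrm{Aut}(G)}(T_i)$ in $\mathrm{Aut}(W_i)$, invoking Proposition~\ref{compacité} ``a second time, inside $W_i$''. But cocompactness of $\mathcal{D}(W_i)$ says nothing about that index: it is a statement about orbits of trees, not about which automorphisms of $W_i$ extend to $G$. Your appeal to Theorem~\ref{coHopf} is likewise beside the point---it compares isomorphisms $U\to W_i$ sending $u$ to the \emph{same} tuple, whereas the problem is to relate those sending $u$ to different tuples.

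The paper closes this step by a different mechanism. Once all normalized tuples $u_n$ lie in a common $U$ and one has $\alpha_n\in\mathrm{Aut}(U)$ with $\alpha_n(u)=u_n$, the obstruction to extending $\alpha_m\circ\alpha_n^{-1}$ to $G$ lives entirely in the edge groups incident to the $U$-vertex of the relative Stallings splitting---and these are \emph{finite}. Since a virtually free group has only finitely many $G$-conjugacy classes of embeddings of any given finite group, the assignment $n\mapsto(\text{$G$-conjugacy class of }\alpha_n|_F)_F$, ranging over representatives $F$ of the finitely many conjugacy classes of finite subgroups of $U$, has finite image. Pigeonhole then produces $n\neq m$ for which $\alpha_m\circ\alpha_n^{-1}$ restricts to a $G$-conjugation on every finite subgroup of $U$, hence on every incident edge group, and therefore extends to an automorphism of $G$. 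This yields the uniform bound without ever needing to compute an index inside $\mathrm{Aut}(W_i)$.
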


Let us recall Corollary \ref{corol}, which will play a key role in the proof of the theorem above.

\begin{co}\label{corol2}
Let $G$ be a virtually free group. Let $u,v\in G^k$. Denote by $\Delta_u$ and $\Delta_v$ two Stallings splittings of $G$ relative to $u$ and $v$ respectively. Let $U$ and $V$ be the vertex groups of $\Delta_u$ and $\Delta_v$ which contain $u$ and $v$ respectively. If $u$ and $v$ have same type, then there exists an endomorphism $\phi$ of $G$ which sends $u$ to $v$ and induces an automorphism between $U$ and $V$.
\end{co}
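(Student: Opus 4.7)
The plan is to obtain the endomorphism $\phi$ from Proposition \ref{prop1} and then upgrade its restriction to $U$ into an isomorphism $U \to V$ by combining it with the corresponding endomorphism in the opposite direction, via the relative co-Hopf property (Theorem \ref{coHopf}). This is essentially the argument already written out for the previous Corollary \ref{corol} of Proposition \ref{prop1}, so I expect this corollary to follow by the same recipe.

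First I would recall that the vertex group $U$ of $\Delta_u$ containing $u$ coincides with the maximal one-ended subgroup of $G$ relative to $\langle u\rangle$, and similarly for $V$ with respect to $\langle v\rangle$; this is just the statement that a Stallings splitting relative to $u$ exhibits precisely the maximal one-ended factor that $u$ lives in. Assuming $\mathrm{tp}(u)=\mathrm{tp}(v)$ (in fact only $\mathrm{tp}_{\forall\exists}$ is needed), Proposition \ref{prop1} directly provides endomorphisms $\phi$ of $G$ with $\phi(u)=v$ and $\phi\!\mid_U$ injective, and $\psi$ of $G$ with $\psi(v)=u$ and $\psi\!\mid_V$ injective.

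Next I would verify that $\phi(U)\subset V$, and symmetrically $\psi(V)\subset U$. Because $\phi$ is injective on $U$ and $U$ is one-ended, the image $\phi(U)$ is a one-ended subgroup of $G$ containing $\phi(\langle u\rangle)=\langle v\rangle$; by maximality of $V$ among one-ended subgroups of $G$ relative to $\langle v\rangle$, we must have $\phi(U)\subset V$. The symmetric inclusion is proved identically.

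Now consider the composition $\psi\circ\phi\!\mid_U : U\to U$. It is a monomorphism of $U$ whose restriction to $\langle u\rangle$ is the identity, since $\psi(\phi(u))=\psi(v)=u$. Because $U$ is one-ended relative to $\langle u\rangle$, Theorem \ref{coHopf} forces $\psi\circ\phi\!\mid_U$ to be an automorphism of $U$. In particular $\phi\!\mid_U$ is injective, and applying the same argument to $\phi\circ\psi\!\mid_V : V \to V$ (which fixes $v$) shows that $\phi\circ\psi\!\mid_V$ is an automorphism of $V$, giving $\phi(U)\supset \phi(\psi(V))=V$ and hence $\phi(U)=V$. Therefore $\phi\!\mid_U:U\to V$ is an isomorphism, and $\phi$ is the required endomorphism. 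No step here looks genuinely difficult: the only subtle point is the identification of the vertex group containing $u$ with the maximal relatively one-ended subgroup, which lets us transfer between the formulation of Proposition \ref{prop1} and that of the present corollary.
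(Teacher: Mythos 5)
Your proof is correct and follows essentially the same route as the paper's own proof of Corollary \ref{corol} (of which Corollary \ref{corol2} is a restatement): obtain $\phi$ and $\psi$ from Proposition \ref{prop1}, note $\phi(U)\subset V$ and $\psi(V)\subset U$, and apply the relative co-Hopf property (Theorem \ref{coHopf}) to $\psi\circ\phi\vert_U$ to conclude that $\phi$ induces an isomorphism $U\to V$. Your extra details — justifying $\phi(U)\subset V$ via maximality of the relatively one-ended factor, and applying co-Hopf also to $\phi\circ\psi\vert_V$ to get surjectivity — only spell out what the paper leaves implicit.
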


Let us prove the theorem.

\begin{proof}Let $G$ be a virtually free group, let $k\geq 1$ be an integer and let $u$ be a $k$-tuple of elements of $G$. Let $(u_n)_{n\in\mathbb{N}}$ be a sequence of $k$-tuples of elements of $G$ such that $u_0=u$ and $\mathrm{tp}(u)=\mathrm{tp}(u_n)$ for every $n$. We shall prove that there exist two distinct integers $n\neq m$ and an automorphism $\sigma$ of $G$ such that $\sigma(u_n)=u_m$. More precisely, we shall prove that there exists an integer $N\geq 1$ which does not depend on $u$, $(u_n)$ and $k$, such that $\vert \lbrace u_n\rbrace/\mathrm{Aut}(G)\vert \leq N$.

For every integer $n$, let $T_n$ denote the Bass-Serre tree associated with a reduced Stallings splitting of $G$ relative to $\langle u_n\rangle$, and let $U_n$ denote the vertex group that contains $u_n$. Let $\phi_n$ be an endomorphism of $G$ that sends $u=u_0$ to $u_n$ and that induces an isomorphism from $U=U_0$ to $U_n$ (this endomorphism $\phi_n$ does exist by Corollaire \ref{corol} reminded above).

We shall prove that there are only finitely many $U_n$ modulo $\mathrm{Aut}(G)$. For each $n$, there exists a non-redundant tree $S_n$ in the Stallings deformation space of $G$, together with a collapse $\pi_n : S_n \twoheadrightarrow T_n$. Indeed, let $S_n$ be the tree obtained from $T_n$ by replacing the vertex fixed by $U_n$ by a reduced Stallings splitting of $U_n$. This tree $S_n$ is a Stallings splitting of $G$, and it collapses onto $T_n$. Up to forgetting the vertices of degree 2, one can assume that $S_n$ is non-redundant.

By Proposition \ref{compacité}, the Stallings deformation space of $G$ is cocompact in the following sense: there exist finitely many trees $X_1,\ldots ,X_p$ in the deformation space such that, for every non-redundant tree $S$ in the Stallings deformation space of $G$, $S=X_i^{\sigma}$ for some $1\leq i\leq p$ and some $\sigma\in\mathrm{Aut}(G)$. As a consequence, up to extracting a subsequence from $(u_n)$, one can suppose that there exists a tree $S$ in the deformation space such that for every $n$, there exists an automorphism $\sigma_n\in\mathrm{Aut}(G)$ such that $S_n=S^{\sigma_n}$. But there are only finitely many ways in which we can collapse $S$, since there are only finitely many orbits of edges under the action of $G$. Hence, up to extracting a subsequence from $(u_n)$ once again, one can suppose that there exists a splitting $T$ of $G$ such that $T_n=T^{\sigma_n}$ for every $n$. Therefore, there exist two distinct integers $n$ and $m$ such that $U_n=\sigma_n\circ \sigma_m^{-1}(U_m)$.

Up to extracting a subsequence from $(u_n)$, and up to replacing each $u_n$ by an element belonging to the same orbit under $\mathrm{Aut}(G)$, one can now assume that all the $u_n$ lie in the same relative one-ended factor, for instance $U$. Therefore, for each $n$, the homomorphism $\phi_n$ induces an automorphism $\alpha_n$ of $U$ that sends $u$ to $u_n$. Hence, $\alpha_m\circ \alpha_n^{-1}$ is an automorphism of $U$ that sends $u_n$ to $u_m$. 

Since the group $U$ has finitely many conjugacy classes of finite subgroups, there exist two distinct integers $n\neq m$ such that the restriction of $\alpha_m\circ \alpha_n^{-1}$ to any finite subgroup of $U$ is a conjugation by some element of $G$. As a consequence, $\alpha_m\circ \alpha_n^{-1}$ is a conjugation on each stabilizer of an edge incident to the vertex fixed by $U$ in $T_u$. Thus $\alpha_m\circ \alpha_n^{-1}$ extends to an automorphism $\alpha$ of $G$.

We have proved that $\vert \lbrace u_n\rbrace/\mathrm{Aut}(G)\vert$ is bounded from above by a constant that does not depend on $u$, $(u_n)$ and $k$.\end{proof}

\section{Generic homogeneity in hyperbolic groups}\label{generic}

It is natural to wonder to what extent a non-homogeneous hyperbolic group is far from being homogeneous. In the current section we shall prove that, in a probabilistic sense, the deficiency of homogeneity is negligible. To that end, let us introduce the following definition.

\begin{de}Let $G$ be a group. Let $k$ be an integer $\geq 1$. A $k$-tuple $u$ of elements of $G$ is said to be \textit{type-determined} if $\vert\lbrace v\in G^k \ \vert \ \mathrm{tp}(v)=\mathrm{tp}(u)\rbrace/\mathrm{Aut}(G)\vert =1$.\end{de}

Let $\mu$ be a probability measure on a finitely generated group $G$ whose support is finite and generates $G$ as a semigroup. An element of $G$ arising from a random walk on $G$ of length $n$ generated by $\mu$ is called a \textit{random element} of length $n$. We define a \textit{random $k$-tuple} of length $n$ as a $k$-tuple of random elements of length $n$ arising from $k$ independant random walks. The following result holds.

\begin{te}\label{random}Let $k$ be an integer $\geq 1$. In a hyperbolic group, the probability that a random $k$-tuple of length $n$ is type-determined tends to one as $n$ tends to infinity.\end{te}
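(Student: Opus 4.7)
The plan is to deduce Theorem~\ref{random} from the two intermediate statements announced in the introduction just after Theorem~\ref{randomgeneric}: first, that every rigid $k$-tuple in a hyperbolic group is $\exists$-type-determined, and second, that a random $k$-tuple of length $n$ is rigid with probability tending to $1$ as $n\to\infty$. Granting both, a random tuple of length $n$ is, with probability tending to $1$, simultaneously rigid and hence type-determined.

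For the first statement, let $u\in G^k$ be rigid and let $v\in G^k$ have the same $\exists$-type as $u$. Rigidity means that $G$ admits no non-trivial splitting over $\mathcal{Z}$ relative to $\langle u\rangle$, so the $\mathcal{Z}$-JSJ decomposition of $G$ relative to $\langle u\rangle$ is trivial; in particular $\mathrm{Mod}_{\langle u\rangle}(G)=\mathrm{Inn}(G)$ and $G$ is one-ended relative to $\langle u\rangle$. Applying Theorem~\ref{short} with $\Gamma=G$, $H=\langle u\rangle$ and $\psi$ the inclusion yields a finite set $F\subset G\setminus\{1\}$ such that every non-injective endomorphism of $G$ fixing $\langle u\rangle$ up to conjugation kills an element of $F$. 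Mimicking Step~1 of the proof of Proposition~\ref{prop1}, simplified because triviality of the JSJ reduces $\Delta$-relatedness to inner conjugation, one encodes by an $\exists$-formula (built from a finite presentation of $G$ and from $F$) the existence of an endomorphism $\phi$ of $G$ sending $u$ to the free-variable tuple $\mathbf{z}$ and avoiding $F$. The identity of $G$ witnesses this formula when $\mathbf{z}=u$, so the formula also holds at $\mathbf{z}=v$, producing an endomorphism $\phi$ with $\phi(u)=v$. Swapping $u$ and $v$ gives an analogous endomorphism $\psi$ with $\psi(v)=u$.

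Now $\psi\circ\phi$ is an endomorphism of $G$ fixing $\langle u\rangle$ pointwise. One arranges the $\exists$-formulas defining $\phi$ and $\psi$ so that their composition still avoids $F$; Theorem~\ref{short} then certifies that $\psi\circ\phi$ is injective, and the relative co-Hopf property (Theorem~\ref{coHopf}) promotes it to an automorphism of $G$. Consequently $\phi$ itself is an automorphism sending $u$ to $v$, so $u$ and $v$ lie in the same $\mathrm{Aut}(G)$-orbit.

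For the second statement, the strategy combines a result of Maher and Sisto~\cite{MS17} with a work in progress of Guirardel and Levitt. Maher-Sisto imply that a random length-$n$ element of a non-elementary hyperbolic group is loxodromic with translation length tending to infinity and, with probability tending to $1$, avoids any prescribed proper quasi-convex subgroup. Guirardel-Levitt provide a finite list of Bass-Serre trees (up to equivariant equivalence) such that non-rigidity of a tuple $u$ forces $u$ to be elliptic in at least one of them. For each such tree, ellipticity traps all components of $u$ in a proper relatively quasi-convex subgroup of $G$, whose probability of containing each of $k$ independent random length-$n$ elements tends to $0$; summing over the finite list yields probability of non-rigidity tending to $0$. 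The main obstacle is the injectivity bootstrap of the first statement---orchestrating the $\exists$-formulas for $\phi$ and $\psi$ so that their composition genuinely avoids the Theorem~\ref{short} obstruction set $F$---whereas the probabilistic statement is essentially a packaging of known results once the finite list of splitting templates is at hand.
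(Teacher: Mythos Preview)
Your overall architecture matches the paper's: reduce Theorem~\ref{random} to (i) every rigid tuple is $\exists$-type-determined, and (ii) random tuples are generically rigid, the latter via Maher--Sisto together with the Guirardel--Levitt result that the paper also invokes as a black box. The divergence is entirely in the proof of (i), where the paper argues with a single morphism while you attempt a two-morphism route.

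That route has two real gaps. First, the swap producing $\psi:v\to u$ is not available from your hypotheses. An $\exists$-formula can say ``there is an endomorphism sending the fixed tuple $u$ to the free variable $\mathbf{z}$'', but not ``there is an endomorphism sending $\mathbf{z}$ to $u$'': one cannot evaluate an abstractly quantified endomorphism on a free variable, since that requires knowing the variable as a word in the chosen generators. To run the symmetric construction you would instead need to apply Theorem~\ref{short} at $v$, which presupposes that $v$ is rigid---and you have not shown that rigidity is preserved under equality of $\exists$-types. Second, even granting both $\phi$ and $\psi$, you concede that making $\psi\circ\phi$ avoid the obstruction set $F$ is ``the main obstacle'' and leave it unresolved; the composition of two endomorphisms encoded by variable tuples is again not first-order expressible, for the same reason as above, so no ``arrangement'' of the two $\exists$-formulas will achieve this inside a single $\exists$-formula. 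The paper's proof sidesteps both problems: it produces only the single endomorphism $\phi$ with $\phi(u)=v$ and $\phi(w)\neq 1$ for every $w\in F$, and then appeals directly to Theorems~\ref{short} and~\ref{coHopf} to conclude that $\phi$ itself is an automorphism, never needing $\psi$ or any composite.
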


\begin{rque}In fact, we shall prove the following stronger result: if $u\in G^k$ is rigid, then it is $\exists$-\textit{type-determined}, meaning that any $k$-tuple $v$ with the same $\exists$-type as $u$ belongs to the same $\mathrm{Aut}(G)$-orbit.\end{rque}

First, we prove Theorem \ref{random} for virtually free groups. In this case, the previous theorem is an easy consequence of Proposition \ref{prop1} combined with a result of Maher and Sisto \cite{MS17}.

\subsection{Generic homogeneity in virtually free groups}

Let $G$ be a virtually free group. Say a tuple $u$ of elements of $G$ is \textit{one-ended} if the group $G$ is one-ended relative to $\langle u\rangle$. The proof of Theorem \ref{random} can be divided into two parts. Fix an integer $k\geq 1$ and let $u\in G^k$.

\emph{Step 1.} If $u$ is one-ended, then $u$ is type-determined (see Proposition \ref{stepg1}). This is an easy consequence of Proposition \ref{prop1}.

\emph{Step 2.} The probability that a random $k$-tuple of length $n$ is one-ended tends to one as $n$ tends to infinity (see Proposition \ref{stepg2}).

\begin{prop}\label{stepg1}Let $G$ be a virtually free group and let $u$ be a finite tuple of elements of $G$. If $u$ is one-ended, then $u$ is type-determined.\end{prop}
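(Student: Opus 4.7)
The plan is to apply Corollary \ref{corol} in the ``reverse'' direction, from $v$ to $u$, and then to exploit the Hopf property of virtually free groups. Let $v\in G^k$ be an arbitrary tuple with $\mathrm{tp}(u)=\mathrm{tp}(v)$; the goal is to produce an automorphism of $G$ mapping $u$ to $v$. Since $u$ is one-ended by assumption, the maximal one-ended subgroup $U$ of $G$ relative to $\langle u\rangle$ equals $G$ itself. Let $V$ denote the maximal one-ended subgroup of $G$ relative to $\langle v\rangle$.

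Now apply Corollary \ref{corol} to the ordered pair $(v,u)$, which is legitimate since $\mathrm{tp}(v)=\mathrm{tp}(u)$. This provides an endomorphism $\psi$ of $G$ with $\psi(v)=u$ whose restriction $\psi|_V:V\to U=G$ is an isomorphism. In particular $\psi(V)=G$, so $\psi(G)\supseteq\psi(V)=G$, and thus $\psi$ is surjective.

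To conclude, I use that virtually free groups are Hopfian (they are residually finite by standard inheritance from free groups through finite-index overgroups, and finitely generated residually finite groups are Hopfian by Malcev's theorem; this is the same argument already used in Section \ref{motivation}). Hence the surjection $\psi:G\to G$ is in fact an automorphism, and $\psi^{-1}\in\mathrm{Aut}(G)$ sends $u$ to $v$. This establishes that $u$ is type-determined.

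I do not expect any serious obstacle in this argument: all the heavy machinery — first-order encoding, the shortening argument, centered splittings, preretractions, and the relative co-Hopf theorem \ref{coHopf} — is absorbed into Corollary \ref{corol}, which here directly produces the desired $\psi$. The only subtle point worth flagging is that we do \emph{not} need to control $\psi$ outside $V$ (nor do we need the analogous map $\phi:G\to G$ from $u$ to $v$ to be surjective): the single identity $\psi(V)=G$ already yields surjectivity, and the Hopf property then upgrades surjectivity to the automorphism we want.
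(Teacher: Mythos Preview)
Your proof is correct and close in spirit to the paper's, though the final step differs. The paper applies Proposition~\ref{prop1} directly: since $U=G$, it obtains a monomorphism $\phi:G\hookrightarrow G$ sending $u$ to $v$ and an endomorphism $\psi$ sending $v$ to $u$ with $\psi|_V$ injective; it then observes that $\phi(G)\subset V$, so that $\psi\circ\phi$ is a monomorphism fixing $u$, hence an automorphism by the relative co-Hopf property (Theorem~\ref{coHopf}), and concludes that $\phi$ is an automorphism. You instead invoke the packaged Corollary~\ref{corol} in the $v\to u$ direction to get $\psi(V)=U=G$ at once, and then finish with the absolute Hopf property of finitely generated virtually free groups. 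Your route is slightly shorter, as it bypasses the verification that $\phi(G)\subset V$; the trade-off is that you replace the relative co-Hopf step by ordinary Hopfianity, which is an independent (though equally standard) ingredient, already used in Section~\ref{motivation}. Both arguments ultimately rest on the same machinery, since Corollary~\ref{corol} is itself derived from Proposition~\ref{prop1} and Theorem~\ref{coHopf}.
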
 

\begin{proof}Let $v$ be a tuple of elements of $G$ such that $\mathrm{tp}_{\forall\exists}(u)=\mathrm{tp}_{\forall\exists}(v)$. Since $G$ is one-ended relative to $\langle u\rangle$, it follows from Proposition \ref{prop1} that there exists a monomorphism $\phi : G \hookrightarrow G$ that sends $u$ to $v$. Let $V$ be the maximal one-ended subgroup of $G$ relative to $\langle v\rangle$. By Proposition \ref{prop1}, there exists an endomorphism $\phi : G \rightarrow G$ that sends $v$ to $u$ and whose restriction to $V$ is injective. Moreover, $\phi(G)$ is contained in $V$. As a consequence, $\psi\circ \phi$ is a monomorphism of $G$ that fixes $u$, so it is an automorphism since $G$ is co-Hopfian relative to $\langle u\rangle$ (see Theorem \ref{coHopf}). Thus, $\phi$ is an automorphism of $G$ which maps $u$ to $v$.\end{proof}

Before proving the second part, we need some preliminary results. Let $G$ be a virtually free group, and let $T$ be a Stallings tree of $G$. We say that two elements $g$ and $g'$ of $G$ have a $p$\textit{-match} if their axes $\gamma$ and $\gamma'$ in $T$ have translates whose intersection has length greater than $p$. We shall use the following result, which is a particular case of Proposition 10 of \cite{MS17}. 

\begin{prop}\label{MS}Let $G$ be a virtually free group, let $T$ be a Stallings tree of $G$. Let $\mu$ be a probability distribution on $G$ such that $\mathrm{supp}(\mu)$ generates $G$ as a semigroup. Let $g\in G$ be an element of infinite order which lies in the support à $\mu$, and let $\gamma$ denote the axis of $g$. We denote by $\gamma_n$ the axis of a random element of length $n$. There exists a constant $C\geq 0$ such that, for every $p\geq C$, the probability that $\gamma$ and $\gamma_n$ have a $p$-match tends to $1$ as $n$ tends to infinity.\end{prop}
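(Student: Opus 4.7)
The plan is to combine three ingredients: positive drift of the random walk on the Stallings tree $T$, a Borel--Cantelli--type observation producing contiguous occurrences of the pattern $g^{k}$ inside the random word, and a tracking argument turning such occurrences into long overlaps between a translate of $\gamma$ and the axis $\gamma_n$ of $w_n=X_1\cdots X_n$. It will be convenient to fix once and for all a base vertex $x_0$ lying on $\gamma$.

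First I would invoke standard random-walk theory for the proper, non-elementary action $G\curvearrowright T$, which is justified because $G$ is infinite virtually free and $\mathrm{supp}(\mu)$ generates $G$: this produces a drift constant $\lambda>0$ with $\ell_T(w_n)/n\to\lambda$ almost surely, together with almost sure convergence of $(w_n\cdot x_0)$ to a boundary point $\xi_+\in\partial T$ and, for the reversed walk, to a distinct boundary point $\xi_-\neq\xi_+$. In particular, $d(x_0,\gamma_n)$ is tight in $n$. Next, setting $\alpha:=\mathbb{P}(X_1=g)>0$ and choosing $k$ large enough that $k\cdot\ell_T(g)$ exceeds $p$ plus a constant to be fixed below, a geometric-waiting argument yields $\mathbb{P}(E_n)\to 1$, where $E_n$ is the event that $X_{j+1}=\cdots=X_{j+k}=g$ for some $j\in[n/3,\,2n/3]$.

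The geometric heart of the proof is the following tracking step. On the event $E_n$, the sub-trajectory $[w_j\cdot x_0,\,w_{j+k}\cdot x_0]$ is a geodesic segment of length $k\cdot\ell_T(g)$ contained in the translate $w_j\cdot\gamma$. In the tree $T$, the geodesic $[x_0,\,w_n\cdot x_0]$ decomposes as $[x_0,\,q]\cup[q,\,w_n\cdot q]\cup[w_n\cdot q,\,w_n\cdot x_0]$, where $q$ is the projection of $x_0$ onto $\gamma_n$ and the middle piece is a sub-segment of $\gamma_n$ of length $\ell_T(w_n)$. Using tightness of $d(x_0,\gamma_n)$ for the two tails together with the drift estimate for the linear-in-$n$ distances $d(x_0,\,w_j\cdot x_0)$ and $d(w_{j+k}\cdot x_0,\,w_n\cdot x_0)$, one argues that, for $n$ large, the segment of length $k\cdot\ell_T(g)$ inside $w_j\cdot\gamma$ lies inside the axial piece $[q,\,w_n\cdot q]\subset\gamma_n$ up to end-losses of bounded size, producing a sub-segment of length at least $p$ common to $w_j\cdot\gamma$ and $\gamma_n$: this is a $p$-match.

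The hard part will be making the tracking step genuinely uniform in $n$. Concretely, one must control the Gromov products $(x_0\mid w_{j+k}\cdot x_0)_{w_j\cdot x_0}$ and $(w_n\cdot x_0\mid w_j\cdot x_0)_{w_{j+k}\cdot x_0}$ and rule out that a large excursion of the walk immediately before time $j$ or immediately after time $j+k$ spoils the tentative overlap. This is precisely what the pivotal-time technology of Maher and Sisto achieves in \cite{MS17}: one produces a positive linear density of candidate pivots along a typical trajectory and shows that at least one survives undisturbed by the rest of the walk with overwhelming probability. Implementing this, or more modestly deducing the statement directly from Proposition~10 of \cite{MS17}, is where the genuine difficulty lies.
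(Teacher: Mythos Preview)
The paper does not prove this proposition at all: it is stated explicitly as ``a particular case of Proposition~10 of \cite{MS17}'' and simply quoted from the literature. So there is nothing to compare your argument to beyond the citation itself.

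Your sketch is a reasonable outline of why such a statement should be true, but it is not a proof, and you say as much in your last paragraph. The geometric step you flag as ``the hard part'' is indeed the entire content: controlling backtracking so that the block $g^k$ appearing in the word $X_1\cdots X_n$ actually survives as a subsegment of the geodesic $[x_0, w_n\cdot x_0]$ (and hence of $\gamma_n$) is exactly the pivotal-time machinery of Maher--Sisto. Without it, nothing prevents the walk after time $j+k$ from re-entering the segment $[w_j\cdot x_0, w_{j+k}\cdot x_0]$ and erasing the prospective match; your appeal to ``tightness of $d(x_0,\gamma_n)$'' and ``drift estimates'' does not address this, since those are statements about the endpoints and global scale, not about preservation of a specific interior subsegment. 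In short, your proposal reduces the proposition to Proposition~10 of \cite{MS17}, which is precisely what the paper does by citation.
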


We keep the same notations. Let $v$ be a vertex of $T$. Let $A_v$ be the set of vertices of $T$ adjacent to $v$. Let $g\in G$ be an element of infinite order. The \textit{Whitehead graph} $\mathrm{Wh}_T(g,v)$ is the labeled graph defined as follows: its vertex set is $A_v$, and two vertices $v_1$ and $v_2$ are joined by an edge if the axis of a conjugate of $g$ contains the segment $[v_1,v_2]$. Whitehead graphs were first introduced by Whitehead in the context of free groups to give a criterion for characterizing primitive elements. Our presentation is inspired from \cite{GH17}, in which the authors extend Whitehead criterion to free products of groups.

The following proposition gives a sufficient condition on an element $g\in G$ for being one-ended, in terms of Whitehead graphs defined above. We say that $g$ \textit{fills} $T$ if, for every vertex $v\in T$, the graph $\mathrm{Wh}_T(g,v)$ is complete.

\begin{prop}\label{rigid}Let $G$ be a virtually free group, let $T$ be a Stallings tree of $G$, and let $g$ be an element of $G$ of infinite order. If $g$ fills $T$, then $g$ is one-ended.\end{prop}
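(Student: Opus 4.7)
The plan is to argue by contradiction: suppose $g$ fills $T$ but $G$ splits non-trivially over a finite subgroup relative to $\langle g\rangle$, and exhibit a vertex of $T$ at which the Whitehead graph fails to be complete. Let $S$ denote the Bass-Serre tree of such a relative splitting, with $g$ fixing some vertex $v_0 \in S$. Since the vertex stabilizers of $T$ are finite and so elliptic in $S$, standard Bass-Serre theory provides a $G$-equivariant simplicial map $f : T \to S$; write $T_0 \subset T$ for the subtree spanned by the vertices sent to $v_0$.

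The key observation I would establish first is that $f$ is constant on the axis of $g$. Indeed, since $g \in \mathrm{Stab}_S(v_0)$ preserves $T_0$, the subtree $T_0$ is a non-empty $g$-invariant subtree of $T$ and therefore contains the minimal such subtree, namely the axis $\gamma$ of $g$. By $G$-equivariance, for every conjugate $g' = hgh^{-1}$ the axis of $g'$ equals $h\gamma$, and $f$ takes the constant value $hv_0$ there. Moreover, since the splitting is non-trivial, $G$ has no global fixed point in $S$, so $f$ cannot be constant and (by minimality of $S$) is in fact surjective; in particular $T_0 \subsetneq T$.

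I would then pick a vertex $v \in T_0$ adjacent in $T$ to some vertex $w \notin T_0$. Because $T_0$ is connected and contains the infinite axis $\gamma$, it has vertices other than $v$, and hence $v$ also has at least one neighbor in $T_0$; in particular $v$ has degree at least $2$ in $T$. The final step is to show that $w$ is an isolated vertex of $\mathrm{Wh}_T(g,v)$: if some Whitehead edge at $v$ were incident to $w$, then the axis of some conjugate $g' = hgh^{-1}$ would pass through both $v$ and $w$; passage through $v$ forces $f(v) = hv_0$, but $f(v) = v_0$, so $h \in \mathrm{Stab}_S(v_0)$ and the axis $h\gamma \subset hT_0 = T_0$ would contradict $w \notin T_0$. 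As $\mathrm{Wh}_T(g,v)$ has at least two vertices, one of which is isolated, it cannot be complete, contradicting the hypothesis that $g$ fills $T$.

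The only subtle point is the first observation—that $f$ literally sends the axis of $g$ (and each of its conjugates) to a single point of $S$, rather than merely to a bounded subtree; this is what pins down exactly where each conjugate's axis can live. Once that is in hand, everything reduces to routine Bass-Serre bookkeeping and the definition of the Whitehead graph, so I expect no further difficulty.
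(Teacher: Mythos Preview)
Your approach differs from the paper's: you build an equivariant map $f\colon T\to S$ into the Bass--Serre tree of the relative splitting and try to read off an incomplete Whitehead graph from the fibre over $v_0$, whereas the paper goes the other way---it refines $S$ to a Stallings tree $S'$ (blowing up the $A$- and $B$-vertices by Stallings trees of $A$ and $B$) and takes a cellular map $S'\to T$ that is the identity on the $A$-minimal subtree $T_A$, so that $\gamma\subset T_A$ sits inside both trees tautologically and one locates an incomplete Whitehead graph at a fold or collapse of this map.

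There is a real gap at exactly the point you flag as subtle. Your justification shows only that $\gamma\subset T_0$, where $T_0$ is the convex hull of $f^{-1}(v_0)$; it does not give $f(\gamma)=\{v_0\}$. For a general equivariant simplicial map $T\to S$ the vertex-fibre $f^{-1}(v_0)$ need not be convex (any fold of $f$ at a vertex outside the fibre separates it), so the hull $T_0$ may be strictly larger. Worse, if some vertex $p\in\gamma$ has $G$-stabiliser $G_p\not\subset A=\mathrm{Stab}_S(v_0)$---and nothing in your argument rules this out---then equivariance forces $G_p$ to fix $f(p)$, hence $f(p)\neq v_0$ for \emph{every} equivariant $f$, and ``$f$ is constant on $\gamma$'' is simply false. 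The same conflation recurs when you assert $f(v)=v_0$ for your boundary vertex $v\in T_0$. If you want to rescue this direction, you would need either to exhibit a specific $f$ with connected fibres (essentially, to show $T$ collapses onto $S$, which is not automatic for a given Stallings tree $T$) or to replace $T_0$ by the $A$-minimal subtree $T_A\subset T$ and redo the isolation argument, since $v\in T_A\cap h\gamma$ no longer forces $h\in A$.
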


\begin{proof}
Towards a contradiction, suppose that $G$ has a splitting of the form $A\ast_C B$ or $A\ast_C$ with $C$ finite, such that $g\in A$. Let $T_A$ be the $A$-invariant minimal subtree of $T$. It is a Stallings tree of $A$. Let $T_B$ be a Stallings tree of $B$. Let $S$ be the Stallings tree of $G$ obtained by replacing the vertex fixed by $A$ by $T_A$ in the Bass-Serre tree of $A\ast_C B$, and by replacing the vertex fixed by $B$ by $T_B$. There exists a cellular map $f : S \rightarrow T$ which coincides with the identity map on $T_A$. The axis $\gamma$ of $g$ is contained in $T_A$, thus it avoids $T_B$. Consequently, if $f$ is an isometry, the axis of $f(\gamma)$ avoids $f(T_B)$. This is a contradiction since $g$ fills $T$ by assumption (i.e.\ for every vertex $v$ of the quotient graph $T/G$, the graph $\mathrm{Wh}_T(g,v)$ is complete). Thus, $f$ is not an isometry, so it folds two edges or collapses an edge. 

\emph{First case.} Assume that $f$ maps two adjacent edges $[v,w]$ and $[v,w']$ of $S$ to the same edge $[f(v),f(w)=f(w')]$ in $T$. Let $A_{w}$ (resp.\ $A_{w'}$) let denote the set of vertices of $S$ which are adjacent to $w$ (resp.\ to $w'$). Since the restriction of $f$ to the axis $\gamma$ of $g$ is an isometry, this axis cannot contain the segment $[w,w']$. Likewise, none of the translates $h\cdot \gamma$ of $\gamma$ (with $h\in G$) contain $[w,w']$. As a consequence, none of the translates of $f(\gamma)$ contain a segment $[f(x),f(x')]$ with $x\in A_w$ and $x'\in A_{w'}$. So the Whitehead graph $\mathrm{Wh}_T(g,f(w)=f(w'))$ is not complete. This is a contradiction.

\emph{Second case.} Let $[v,w]$ be an edge of $S$ collapsed by $f$. Let $A_v$ be the set of vertices of $S$ that are adjacent to $v$ and that are not of the form $h\cdot w$ with $h\in G_v$. We define $A_w$ in the same manner. Since $f$ is an isometry on $\gamma$, none of the translates of $\gamma$ contain the edge $[v,w]$, thus none of the translates of $f(\gamma)$ contain the segment $[f(x),f(y)]$ with $x\in A_v$ and $y\in A_w$. So the graph $\mathrm{Wh}_T(g,f(v)=f(w))$ is not complete. This is a contradiction.\end{proof}

We are ready to prove the second step of the proof of Theorem \ref{random} in the case of virtually free groups.

\begin{prop}\label{stepg2}Let $G$ be a virtually free group, let $T$ be a Stallings tree of $G$, and let $g$ be an element of infinite order that fills $T$. Let $\mu$ be a probability distribution on $G$ such that $\mathrm{supp}(\mu)$ generates $G$ as a semigroup and contains $g$. Then, the probability that a random element of length $n$ is one-ended tends to one as $n$ tends to infinity.\end{prop}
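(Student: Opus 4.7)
The plan is to combine Proposition \ref{MS} with Proposition \ref{rigid} via an intermediate combinatorial step: a sufficiently long match with the axis $\gamma$ of $g$ will force the axis of a random element to exhibit all the turn patterns of $\gamma$, and hence to fill $T$.

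First I would unpack ``$g$ fills $T$'' into a finite combinatorial statement. Because $T$ is a Stallings tree with finite vertex stabilizers and finite quotient graph $T/G$, each $A_v$ is finite, and the set $\mathcal{T}$ of $G$-orbits of triples $(v,v_1,v_2)$ with $v$ a vertex of $T$ and $v_1,v_2\in A_v$ is finite. Saying that $g$ fills $T$ is equivalent to: every orbit in $\mathcal{T}$ is realized as a turn of some $G$-translate of $\gamma$, equivalently (since conjugation permutes translates of $\gamma$) as a turn of $\gamma$ itself. Fix a fundamental domain $F\subset \gamma$ of length $\ell:=\ell_T(g)$ for the $\langle g\rangle$-action. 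The turns of $\gamma$ are $\langle g\rangle$-translates of turns of $F$, so $F$ already realizes every orbit in $\mathcal{T}$. Consequently, any sub-segment of $\gamma$ of length strictly greater than $2\ell$ contains a full $\langle g\rangle$-translate of $F$, and therefore also realizes every orbit in $\mathcal{T}$.

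Let $C$ be the constant provided by Proposition \ref{MS} for $g$, and set $p:=\max(C,2\ell)+1$. I claim that any element $g'\in G$ of infinite order whose axis $\gamma'$ has a $p$-match with $\gamma$ fills $T$. Indeed, by definition of a $p$-match there exist $h,h'\in G$ such that $\sigma:=h\gamma\cap h'\gamma'$ is a segment of length $>p>2\ell$. Since $h\gamma$ is the axis of the conjugate $hgh^{-1}$, which also fills $T$ with the same translation length $\ell$, the previous paragraph applied to $h\gamma$ shows that $\sigma$ realizes every $G$-orbit in $\mathcal{T}$. But $\sigma$ is also a sub-segment of $h'\gamma'$, so every orbit in $\mathcal{T}$ is realized by some turn of a $G$-translate of $\gamma'$. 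Hence $g'$ fills $T$, and by Proposition \ref{rigid}, $g'$ is one-ended.

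Finally, Proposition \ref{MS} yields that the probability that a random element $g_n$ of length $n$ has a $p$-match with $g$ tends to $1$ as $n\to\infty$; in particular $g_n$ is generically of infinite order. Combining with the preceding claim gives the conclusion. The main technical point, and essentially the only non-routine step, is identifying the correct combinatorial threshold $p$ and checking that a $p$-match transfers the turn patterns of $\gamma$ onto $\gamma'$; this relies entirely on the finiteness of $\mathcal{T}$, which is a direct consequence of $T$ being a Stallings tree of a virtually free group.
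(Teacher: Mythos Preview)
Your proof is correct and follows essentially the same route as the paper's: invoke Proposition \ref{MS} to get a $p$-match with high probability, argue that a sufficiently long overlap with the axis of $g$ forces the random element to fill $T$, then apply Proposition \ref{rigid}. The paper asserts the middle step in one line with the tighter threshold $p\geq \ell$ (an overlap of length $>\ell$ already contains a full period of turns of $\gamma$), whereas you spell it out with the more generous threshold $2\ell$; this is harmless, though your claim that ``$F$ already realizes every orbit in $\mathcal{T}$'' is slightly imprecise at the endpoints of $F$ and is rescued precisely by your larger choice of $p$.
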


\begin{proof}Let $T$ be a Stallings tree of $G$. Let $L$ be the translation length of $g$. By Proposition \ref{MS}, there exists a constant $C\geq 0$ such that, for every $p\geq \max(C,L)$, the probability that $g$ and a random element $g_n$ of length $n$ have a $p$-match tends to $1$ as $n$ tends to infinity. Moreover, if $g$ and $g_n$ have a $p$-match, the element $g_n$ fills $T$ since $p\geq L$. Now, it follows from Proposition \ref{rigid} that $g_n$ is one-ended.\end{proof}

\subsection{Generic homogeneity in the general case}

A tuple $u$ of elements of $G$ is termed \textit{rigid} if the group $G$ does not split non-trivially relative to $\langle u\rangle$. In a work in progress, Guirardel and Levitt prove the following result, which claims that a hyperbolic group has no non-trivial splitting relative to a random tuple, i.e.\ that a random tuple is rigid. Their proof relies on Proposition 10 in \cite{MS17}.

\begin{prop}Fix an integer $k\geq 1$. In a hyperbolic group, the probability that a random $k$-tuple of length $n$ is rigid tends to one as $n$ tends to infinity.\end{prop} 

Theorem \ref{random} for hyperbolic groups is an immediate corollary of the previous result, combined with the following proposition.

\begin{prop}Fix an integer $k\geq 1$. Let $G$ a hyperbolic group, and let $u=(u_1,\ldots,u_k)$ be a $k$-tuple of element of $G$. If $u$ is rigid, then $u$ is type-determined. In fact, the following stronger property holds: any $k$-tuple $v$ with the same $\exists$-type as $u$ belongs to the same $\mathrm{Aut}(G)$-orbit.\end{prop}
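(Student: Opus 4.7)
The plan is to adapt the two-sided argument used for virtually free groups (Proposition~\ref{stepg1}) to the general hyperbolic setting, where the analogue of Proposition~\ref{prop1} is replaced by the shortening machinery of Theorem~\ref{short} together with the observation that rigidity trivializes the modular group.

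First I would extract the structural consequences of rigidity. Since $G$ does not split non-trivially relative to $\langle u\rangle$ over $\mathcal{Z}$, the group $G$ is one-ended relative to $\langle u\rangle$, the $\mathcal{Z}$-JSJ of $G$ relative to $\langle u\rangle$ is trivial, and $\mathrm{Mod}_{\langle u\rangle}(G)$ therefore consists of inner automorphisms. By Theorem~\ref{indice fini}, $\mathrm{Aut}_{\langle u\rangle}(G)/\mathrm{Inn}(G)$ is finite; fix coset representatives $\tau_1,\ldots,\tau_N\in\mathrm{Aut}_{\langle u\rangle}(G)$. Moreover, since same $\exists$-type is equivalent to same quantifier-free type, the map $u_i\mapsto v_i$ extends to a monomorphism $\psi_v:\langle u\rangle\hookrightarrow G$.

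Next I would construct a monomorphism $\phi:G\hookrightarrow G$ with $\phi(u)=v$. Applying Theorem~\ref{short} to the datum $(G,\langle u\rangle,\psi_v)$ yields a finite set $F\subset G\setminus\{1\}$. Because modular automorphisms are inner and therefore preserve kernels up to conjugation, the finite set $F':=\bigcup_{i=1}^{N}\tau_i(F)\subset G\setminus\{1\}$ has the property that any homomorphism $\phi:G\to G$ coinciding with $\psi_v$ on $\langle u\rangle$ up to conjugation and satisfying $\phi(f)\neq 1$ for every $f\in F'$ must be injective. Fixing a finite presentation $G=\langle s_1,\ldots,s_n\mid R_j\rangle$ and words $w_i,w_f$ expressing $u_i$ and $f$ in these generators, this condition is captured by the existential sentence
\[
\theta_{F'}(\mathbf{z}):\ \exists x_1,\ldots,x_n,\gamma\ \Bigl(\textstyle\bigwedge_j R_j(\mathbf{x})=1\ \wedge\ \bigwedge_i w_i(\mathbf{x})=\gamma z_i\gamma^{-1}\ \wedge\ \bigwedge_{f\in F'} w_f(\mathbf{x})\neq 1\Bigr).
\]
This sentence is satisfied by $u$ (take $\mathbf{x}=(s_i)$ and $\gamma=1$), and by equality of $\exists$-types it is also satisfied by $v$; extracting witnesses and conjugating away $\gamma$ produces the desired $\phi$.

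The main obstacle will be to establish that $v$ is also rigid, so that the same construction can be run with the roles of $u$ and $v$ swapped. My proposed approach is a pull-back argument: if $G$ admitted a non-trivial $\mathcal{Z}$-splitting relative to $\langle v\rangle$ with Bass-Serre tree $T$, the action $g\cdot t:=\phi(g)\cdot t$ of $G$ on $T$ would make $u$ elliptic while keeping edge stabilizers in $\mathcal{Z}$ (since $\phi$ is injective). Either this pulled-back action is non-trivial, producing a $\mathcal{Z}$-splitting of $G$ relative to $\langle u\rangle$ contradicting rigidity of $u$, or $\phi(G)$ lies entirely in a vertex group $G_w\subsetneq G$. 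Ruling out this degenerate alternative is the delicate step; I expect to handle it by iterating the construction and combining the acylindricity of the JSJ action with the finiteness result Theorem~\ref{sa} applied to the induced monomorphisms, together with co-Hopf properties of the hyperbolic vertex groups encountered in the iteration.

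Once $v$ is known to be rigid, the symmetric application of the earlier construction provides a monomorphism $\psi:G\hookrightarrow G$ with $\psi(v)=u$. Then $\psi\circ\phi$ is a monomorphism of $G$ whose restriction to $\langle u\rangle$ is the identity, and Theorem~\ref{coHopf}---applicable since $G$ is one-ended relative to $\langle u\rangle$---forces it to be an automorphism of $G$. Consequently $\phi$ is itself an automorphism, and it carries $u$ to $v$, which is the desired conclusion.
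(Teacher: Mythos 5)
Your construction of a monomorphism $\phi$ with $\phi(u)=v$ is correct, and in one respect more careful than the paper: you correctly note that the finite set $F$ from Theorem~\ref{short} depends on the monomorphism $\psi:\langle u\rangle\to G$, and you therefore apply the theorem with $\psi=\psi_v$. (Your extra step $F'=\bigcup_i\tau_i(F)$ is however superfluous: by the remark following Theorem~\ref{short} one may take $\sigma\in\mathrm{Mod}_{\langle u\rangle}(G)$, rigidity gives $\mathrm{Mod}_{\langle u\rangle}(G)=\mathrm{Inn}(G)$, and kernels are normal, so one already gets $\ker\phi\cap F\neq\varnothing$.) But your Step~3, establishing that $v$ is rigid in order to run the construction symmetrically, is not how the paper argues and it contains a genuine gap: you reduce to excluding the case $\phi(G)\subseteq G_w\subsetneq G$ for a vertex group of a hypothetical splitting relative to $\langle v\rangle$, and the plan you sketch (``iterating\ldots acylindricity\ldots Theorem~\ref{sa}\ldots co-Hopf'') is not a proof. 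An injective $\phi:G\hookrightarrow G_w$ is not obviously impossible here, because $G$ is only known to be one-ended \emph{relative to} $\langle u\rangle$, not one-ended absolutely, so Moioli's absolute co-Hopf theorem does not apply.

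The rigidity of $v$ is in fact not needed. Take $F_0$ from Theorem~\ref{short} applied to $(G,\langle u\rangle,\mathrm{id}_{\langle u\rangle})$; by the remark and the normality/inner-modular observations above, every non-injective endomorphism of $G$ fixing $u$ up to conjugation kills an element of $F_0$. Having produced $\phi$ with $\phi(u)=v$ (it need not even be known injective at this stage, only that $\phi(F_0)\cap\{1\}=\varnothing$), set $F_1:=\phi(F_0)\subset G\setminus\{1\}$ and write the existential sentence ``there is an endomorphism $\alpha$ of $G$ with $\alpha(v)=\mathbf{y}$ (exactly) and $\alpha(f)\neq 1$ for all $f\in F_1$''. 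It holds for $\mathbf{y}=v$ (take $\alpha=\mathrm{id}$), hence for $\mathbf{y}=u$ by equality of $\exists$-types; extract $\psi'$ with $\psi'(v)=u$ and $\psi'(F_1)\cap\{1\}=\varnothing$. Then $\psi'\circ\phi$ fixes $u$ and kills no element of $F_0$, hence is injective, hence an automorphism by Theorem~\ref{coHopf}; consequently $\phi$ is injective and $\psi'$ is surjective, hence an automorphism (Hopf), and $\phi=\psi'^{-1}\circ(\psi'\circ\phi)$ is the desired automorphism sending $u$ to $v$. This is essentially the intent of the paper's (rather terse) proof: the paper extracts $\phi$ from a single existential sentence and then cites Theorems~\ref{short} and~\ref{coHopf}, but the extracted $\phi$ coincides with $u\mapsto v$, not with the identity, on $\langle u\rangle$, so those theorems do not apply to $\phi$ itself; the second sentence above, producing $\psi'$ so that $\psi'\circ\phi$ \emph{does} fix $u$, is the missing step, and it is also what makes your Step~3 unnecessary.
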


\begin{proof}
Since $u$ is rigid, the JSJ splitting of $G$ relative to $\langle u\rangle$ is trivial and the modular group $\mathrm{Mod}_{\langle u\rangle}(G)$ is trivial as well. It follows from Theorem \ref{short} that there exists a finite subset $F=\lbrace w_1,\ldots ,w_{\ell}\rbrace\subset G\setminus \lbrace 1\rbrace$ such that any non-injective endomorphism $\phi$ of $G$ which fixes $u$ kills an element of $F$. We claim that this statement is expressible by means of an existential formula $\theta(\mathbf{y})$ with $k$ free variables satisfied by $u$.

Let $G=\langle s_1,\ldots ,s_n \ \vert \ \Sigma(s_1,\ldots,s_n)\rangle$ be a finite presentation of $G$. Let $v=(v_1,\ldots ,v_k)$ be a $k$-tuple of elements of $G$ such that $\mathrm{tp}_{\exists}(v)=\mathrm{tp}_{\exists}(u)$. Observe that there is a one-to-one correspondance between the set of endomorphisms of $G$ and the set of solutions in $G^n$ of the system of equations $\Sigma(x_1,\ldots,x_n)=1$. We define the formula $\theta(\mathbf{y})$ as follows:

\[\theta(\mathbf{y}) : \exists x_1\ldots \exists x_n \ \Sigma(x_1,\ldots,x_n)=1\bigwedge_{i=1}^k y_i=u_i(x_1,\ldots ,x_n) \bigwedge_{i=1}^{\ell} w_i(x_1,\ldots,x_n)\neq 1 .\]

Since the identity of $G$ fixes $u$ and does not kill any element of $F$, the statement $\theta(u)$ holds in $G$ (by setting $x_i=s_i$ for every $1\leq i\leq n$). Since $u$ and $v$ have the same existential type, $\theta(v)$ is satisfied by $G$ as well. Hence, one can define an endomorphism $\phi$ of $G$ which maps $u$ to $v$ by sending $s_i$ to $x_i$ for every $1\leq i\leq n$. This endomorphism is injective by Theorem \ref{short}, so it is an automorphism by Theorem \ref{coHopf}.\end{proof}

\renewcommand{\refname}{References}
\bibliographystyle{alpha}
\bibliography{biblio}

\def\cprime{$'$} \def\cprime{$'$}
\begin{thebibliography}{And18}

\bibitem[And18]{And18}
Simon Andr\'e.
\newblock Hyperbolicity and cubulability are preserved under elementary
  equivalence.
\newblock arXiv:1801.09411, 2018.

\bibitem[DG11]{DG11}
Fran{\c{c}}ois Dahmani and Vincent Guirardel.
\newblock The isomorphism problem for all hyperbolic groups.
\newblock {\em Geom. Funct. Anal.}, 21(2):223--300, 2011.

\bibitem[GH17]{GH17}
Vincent Guirardel and Camille Horbez.
\newblock Algebraic laminations for free products and arational trees.
\newblock arXiv:1709.05664, 2017.

\bibitem[GL15]{GL15}
Vincent Guirardel and Gilbert Levitt.
\newblock Splittings and automorphisms of relatively hyperbolic groups.
\newblock {\em Groups Geom. Dyn.}, 9(2):599--663, 2015.

\bibitem[GL17]{GL16}
Vincent Guirardel and Gilbert Levitt.
\newblock J{SJ} decompositions of groups.
\newblock {\em Ast\'{e}risque}, (395):vii+165, 2017.

\bibitem[Gui08]{Gui08}
Vincent Guirardel.
\newblock Actions of finitely generated groups on {$\Bbb R$}-trees.
\newblock {\em Ann. Inst. Fourier (Grenoble)}, 58(1):159--211, 2008.

\bibitem[Hor17]{Hor17}
Camille Horbez.
\newblock The boundary of the outer space of a free product.
\newblock {\em Israel J. Math.}, 221(1):179--234, 2017.

\bibitem[Mar02]{Mar02}
David Marker.
\newblock {\em Model theory}, volume 217 of {\em Graduate Texts in
  Mathematics}.
\newblock Springer-Verlag, New York, 2002.
\newblock An introduction.

\bibitem[Moi13]{Moi13}
Christophe Moioli.
\newblock Graphes de groupes et groupes co-hopfiens.
\newblock {\em PhD thesis}, 2013.

\bibitem[MS17]{MS17}
Joseph Maher and Alessandro Sisto.
\newblock Random subgroups of acylindrically hyperbolic groups and hyperbolic
  embeddings.
\newblock arXiv:1701.00253, 2017.

\bibitem[Nie03]{Nie03}
Andr\'e Nies.
\newblock Aspects of free groups.
\newblock {\em J. Algebra}, 263(1):119--125, 2003.

\bibitem[OH11]{OH11}
A.~Ould~Houcine.
\newblock Homogeneity and prime models in torsion-free hyperbolic groups.
\newblock {\em Confluentes Math.}, 3(1):121--155, 2011.

\bibitem[Per08]{Per08}
Chlo{\'e} Perin.
\newblock {\em {Elementary embeddings in torsion-free hyperbolic groups}}.
\newblock Theses, {Universit{\'e} de Caen}, October 2008.
\newblock Th{\`e}se r{\'e}dig{\'e}e en anglais, avec une introduction
  d{\'e}taill{\'e}e en fran{\c c}ais.

\bibitem[Per11]{Per11}
Chlo{\'e} Perin.
\newblock Elementary embeddings in torsion-free hyperbolic groups.
\newblock {\em Ann. Sci. \'Ec. Norm. Sup\'er. (4)}, 44(4):631--681, 2011.

\bibitem[PS12]{PS12}
Chlo\'e Perin and Rizos Sklinos.
\newblock Homogeneity in the free group.
\newblock {\em Duke Math. J.}, 161(13):2635--2668, 2012.

\bibitem[RS94]{RS94}
E.~Rips and Z.~Sela.
\newblock Structure and rigidity in hyperbolic groups. {I}.
\newblock {\em Geom. Funct. Anal.}, 4(3):337--371, 1994.

\bibitem[RW14]{RW14}
Cornelius Reinfeldt and Richard Weidmann.
\newblock Makanin-razborov diagrams for hyperbolic groups.
\newblock 2014.

\bibitem[Sel97]{Sel97}
Z.~Sela.
\newblock Structure and rigidity in ({G}romov) hyperbolic groups and discrete
  groups in rank {$1$} {L}ie groups. {II}.
\newblock {\em Geom. Funct. Anal.}, 7(3):561--593, 1997.

\bibitem[Sta83]{Sta83}
John~R. Stallings.
\newblock Topology of finite graphs.
\newblock {\em Invent. Math.}, 71(3):551--565, 1983.

\end{thebibliography}

\vspace{5mm}

\textbf{Simon André}

Univ Rennes, CNRS, IRMAR - UMR 6625, F-35000 Rennes, France.

E-mail address: \emph{simon.andre@univ-rennes1.fr}

\begin{comment}
\begin{center}
\begin{tikzpicture}[scale=1]
\node[draw,circle, inner sep=1.7pt, fill, label=below:{$w$}] (A1) at (2,0) {};
\node[draw,circle, inner sep=1.7pt, fill, label=below:{$h_1w$}] (A2) at (2,2) {};
\node[draw,circle, inner sep=1.7pt, fill, label=left:{$h_2w$}] (A11) at (1,3) {};
\node[draw,circle, inner sep=1.7pt, fill] (A9) at (-1,3) {};
\node[draw,circle, inner sep=1.7pt, fill] (A10) at (-2,2) {};
\node[draw,circle, inner sep=1.7pt, fill] (A6) at (-2,0) {};
\node[draw,circle, inner sep=1.7pt, fill] (A7) at (-1,-1) {};
\node[draw,circle, inner sep=1.7pt, fill] (A8) at (1,-1) {};
\node[draw,circle, inner sep=1.7pt, fill] (A3) at (0,1) {};
\node[draw=none, label=below:{$e$}] (B1) at (1,0.5) {};
\node[draw=none, label=below:{$h_1e$}] (B2) at (1,2.2) {};
\node[draw=none, label=below:{$\langle G_e,H\rangle$}] (B3) at (7,2) {};
\node[draw,circle, inner sep=1.7pt, fill, label=below:{$\langle G_w,H\rangle$}] (A4) at (8,1) {};
\node[draw,circle, inner sep=1.7pt, fill, label=below:{$G_v$}] (A5) at (6,1) {};

\draw[-,>=latex] (A3) to (A1) ;
\draw[-,>=latex] (A3) to (A2);
\draw[-,>=latex] (A3) to (A9) ;
\draw[-,>=latex] (A3) to (A10) ;
\draw[-,>=latex] (A3) to (A11) ;
\draw[-,>=latex] (A3) to (A6) ;
\draw[-,>=latex] (A3) to (A7) ;
\draw[-,>=latex] (A3) to (A8) ;
\draw[-,>=latex] (A4) to (A5);
\draw[->,>=latex, dashed] (3,1) to (5,1);
\end{tikzpicture}
\end{center}
\end{comment}

\end{document}